\crefname{hypothesis}{Hypothesis}{Hypotheses}
\title{A structure-preserving domain decomposition method for data-driven modeling
\thanks{{\bfseries Funding:} This article has been authored by an employee of National Technology \& Engineering Solutions of Sandia, LLC under Contract No. DE-NA0003525 with the U.S. Department of Energy (DOE). The employee owns all right, title and interest in and to the article and is solely responsible for its contents. The United States Government retains and the publisher, by accepting the article for publication, acknowledges that the United States Government retains a non-exclusive, paid-up, irrevocable, world-wide license to publish or reproduce the published form of this article or allow others to do so, for United States Government purposes. The DOE will provide public access to these results of federally sponsored research in accordance with the DOE Public Access Plan \url{https://www.energy.gov/downloads/doe-public-access-plan.}}}
 \author{Shuai Jiang\thanks{Center for Computing Research, Sandia National Laboratories, Albuquerque, NM. USA}\and Jonas Actor\footnotemark[2]\and Scott Roberts\thanks{Engineering Sciences Center, Sandia National Laboratories, 1515 Eubank SE, Albuquerque, NM 87123, USA}\and Nathaniel Trask\thanks{Department of Mechanical Engineering and Applied Mechanics, University of Pennsylvania,
Philadelphia, PA, USA. \email{ntrask@seas.upenn.edu}}}
\newcommand*{\addFileDependency}[1]{
  \typeout{(#1)}
  \@addtofilelist{#1}
  \IfFileExists{#1}{}{\typeout{No file #1.}}
}
\newcommand{\mat}[1]{\bm{#1}}
\renewcommand{\div}{\nabla \cdot}
\newcommand{\ipbc}[2]{\langle #1, #2 \rangle}
\pgfplotsset{compat=1.14}
\newcommand{\logLogSlopeTriangle}[5]
{

    \pgfplotsextra
    {
        \pgfkeysgetvalue{/pgfplots/xmin}{\xmin}
        \pgfkeysgetvalue{/pgfplots/xmax}{\xmax}
        \pgfkeysgetvalue{/pgfplots/ymin}{\ymin}
        \pgfkeysgetvalue{/pgfplots/ymax}{\ymax}

        \pgfmathsetmacro{\xArel}{#1}
        \pgfmathsetmacro{\yArel}{#3}
        \pgfmathsetmacro{\xBrel}{#1-#2}
        \pgfmathsetmacro{\yBrel}{\yArel}
        \pgfmathsetmacro{\xCrel}{\xArel}

        \pgfmathsetmacro{\lnxB}{\xmin*(1-(#1-#2))+\xmax*(#1-#2)} 
        \pgfmathsetmacro{\lnxA}{\xmin*(1-#1)+\xmax*#1} 
        \pgfmathsetmacro{\lnyA}{\ymin*(1-#3)+\ymax*#3} 
        \pgfmathsetmacro{\lnyC}{\lnyA+#4*(\lnxA-\lnxB)}
        \pgfmathsetmacro{\yCrel}{(\lnyC-\ymin)/(\ymax-\ymin)} 

        \coordinate (A) at (rel axis cs:\xArel,\yArel);
        \coordinate (B) at (rel axis cs:\xBrel,\yBrel);
        \coordinate (C) at (rel axis cs:\xCrel,\yCrel);

        \draw[#5]   (A)-- node[pos=0.5,anchor=north] {1}
                    (B)-- 
                    (C)-- node[pos=0.5,anchor=west] {#4}
                    cycle;
    }
}
\begin{document}

\maketitle

\begin{abstract}
We present a domain decomposition strategy for developing structure-preserving finite element discretizations from data when exact governing equations are unknown.
On subdomains, trainable Whitney form elements are used to identify structure-preserving models from data, providing a Dirichlet-to-Neumann map which may be used to globally construct a mortar method. 
The reduced-order local elements may be trained offline to reproduce high-fidelity Dirichlet data in cases where first principles model derivation is either intractable, unknown, or computationally prohibitive. 
In such cases, particular care must be taken to preserve structure on both local and mortar levels without knowledge of the governing equations, as well as to ensure well-posedness and stability of the resulting monolithic data-driven system. 
This strategy provides a flexible means of both scaling to large systems and treating complex geometries, and is particularly attractive for multiscale problems with complex microstructure geometry.
While consistency is traditionally obtained in finite element methods via quasi-optimality results and the Bramble-Hilbert lemma as the local element diameter $h\rightarrow0$, our analysis establishes notions of accuracy and stability for finite $h$ with accuracy coming from matching data. 
Numerical experiments and analysis establish properties for $H(\operatorname{div})$ problems in small data limits ($\mathcal{O}(1)$ reference solutions). 
\end{abstract}

\begin{keywords}
    Structure preservation, mortar method, domain decomposition, Whitney forms, model reduction, data-driven modelling, scientific machine learning
\end{keywords}

\begin{AMS}
    68T01, 65N30, 65N55
\end{AMS}

\section{Introduction}
We consider the problem of identifying a model from data when the governing equations are unknown, but the conservation structure is known. Namely, one may know that fluxes associated with mass, momentum, or energy are conserved, but be unable to derive specific expressions for those fluxes.

We assume a class of models of the form
\begin{equation}
    \begin{aligned}
    \div \mat u &= - f  &\text{on }& \Omega, &\\
    \mat u &= h(p;\theta) &\text{on }& \Omega,& \\
    p &= g              &\text{on }& \partial \Omega&
    \end{aligned}
\end{equation}
where $\Omega \in \mathbb{R}^d$ is a Lipschitz domain, $f \in L^2(\Omega)$ forcing term, $g$ Dirichlet data, and $h$ a closure for the flux of unknown functional form approximated by a family of non-parametric regressors parameterized by $\theta$. 
We demonstrate on $\Omega \in \mathbb{R}^2$ exclusively, but the techniques shown here generalize to higher dimensions and arbitrary manifolds. 
For this class of problems, data is provided in the form $\mathcal{D} = \left\{(\mat u_k,f_k, h_k, g_k)\right\}_{k=1}^{N}$ and one identifies parameters $\theta$ which minimize error in a suitable norm, providing a model which may generalize by solving for choices of $f$ and $h$ outside the training set. 

By casting data-driven modeling in such a \textit{structure-preserving} framework, one aims to identify a model which balances a trade-off between rigorous preservation of physical/algebraic/stability structure while maintaining ``black-box'' approximation of as large a class of models as possible. 
This lies on a spectrum of methods in the literature spanning a trade-off between expressivity and exploitable structure. 
For example, operator regression methods aim to directly identify a solution map $(f,h)\rightarrow \mat u$ via interpolation in unconstrained Hilbert spaces (high expressivity), while PDE-constrained optimization \cite{biegler2003large, hinze2008optimization} assumes a known functional form for $h$ which requires only estimation of material parameters (highly structured with simplified analysis). 

For the purposes of this work we consider elliptic systems of $H(\operatorname{div})$-type where structure-preservation amounts to preserving notions of flux continuity. 
In the literature, preservation of other types of structure is a key challenge for data-driven models: gauge invariances associated with non-trivial null-spaces \cite{trask2022enforcing}, geometric structure associated with bracket dynamics \cite{gruber2023reversible,greydanus2019hamiltonian,desai2021port,hernandez2021structure}, group equivariance \cite{bergomi2019towards,villar2021scalars} and other structures \cite{celledoni2021structure}. 
Many of these approaches aim to enforce the invariances by construction rather than rely on data or training to ``learn'' them, allowing better performance in small-data limits and improved theoretical properties.

In our previous works \cite{actoradata,trask2022enforcing}, we have developed structure-preserving machine learning frameworks generalizing the discrete exterior calculus (DEC) and finite element exterior calculus (FEEC) (see \cref{sec:decfeec}). 
Both frameworks pose the learning of physics as identifying maps between cochains associated with a de Rham complex, and provide a number of desirable theoretical guarantees: preservation of exact sequence structure (e.g. $\div (\nabla \times) = 0$), exact local conservation of generalized fluxes, an exact Hodge decomposition, a Lax-Milgram stability theory for Hodge Laplacians, well-posedness theory for nonlinear problems, and a framework for treating problems with non-trivial null-spaces (e.g. electromagnetism). 
In the FEEC setting, a Dirichlet-to-Neumann map prescribing the exchange of generalized fluxes between subdomains is expressed in terms of parameterized Whitney forms, allowing the machine learning of geometric control volumes which optimally admit integral balance laws. 
While effective for providing rigorous structure-preservation, the scheme provides poor computational scaling whereby the number of degrees of freedom scale as $\mathcal O(N^k)$, where $N$ is the number of partitions and $k$ is the order of the Whitney form. 

The current work applies a divide-and-conquer strategy to mitigate this by partitioning the domain into disjoint, non-overlapping subdomains $\Omega = \cup_i \Omega_i$, whose exact specifications will be discussed later, and seeks local models restricted to each $\Omega_i$ of the form
\begin{equation}\label{eq:localmortargeneralcase}
    \begin{aligned}
    \div \mat u_i &= - f_i, \\
    \mat u_i &= h(p_i;\theta_i),\\
    p_i &= g_i&\text{ on } \partial \Omega_i, 
    \end{aligned}
\end{equation}
with the subscript $\cdot_i$ denoting appropriate restrictions of fields to $\Omega_i$. 
The framework for regressing local models is introduced in \cref{sec:whitneysec}. 
To train subdomain models, we can perform offline training over data $\mathcal{D}_i = \left\{(u_{i,k},f_{i,k},h_{i,k}, g_{i, k})\right\}_{k=1}^{N_{i}}$. 
This can be obtained either by taking the restriction of global data onto the subdomain ($g_i=p|_{\partial \Omega}$), or by performing simulations directly on each subdomain to  identify the local response to a representative mortar space (e.g. $g_i \in \mathbb{P}_m(\partial \Omega_i)$ the space of $m$\textsuperscript{th}-order polynomials). 
After obtaining local models, a mortar method is presented in \cref{sec:mortarsec} which is used to assemble local models into a global model on $\Omega$. 

For this data-driven mortar strategy, we impose two desired requirements:
\begin{enumerate}
    \item \textbf{R1: Preservation of structure across both scales:} For the $H(\operatorname{div})$ problems under consideration, the Whitney form construction admits interpretation as an integral balance law where fluxes are discretely treated as equal and opposite, providing a local conservation principle on each subdomain $\Omega_i$. 
    We require that the mortar formulation be compatible with this, so that when local elements are stitched together through the mortar we preserve conservation globally on $\Omega$.
    \item \textbf{R2: Stability of error at global scale:} 
    If, during pretraining, local models may be obtained to a given optimization error, we would like to quantify the error induced at a global level by the coupling process. 
    Ideally this would be bound by a constant independent of the number of subdomains, so that the global error remains comparable to that of the locally trained models as many elements are coupled together and performance does not degenerate in the limit of many data-driven elements.
\end{enumerate}
We demonstrate both requirements either in analytical proofs in \cref{sec:mortarsec}, or via numerical example in \cref{sec:numerical-results}.
Finally, the technical proofs and more details regarding training are shown in the appendix \cref{sec:appendix}.

\section{Relation to previous work}
The proposed strategy exploits a connection to structure-preserving PDE discretization to ensure that physics are enforced by construction, rather than via the penalty formulation typically pursued in the physics-informed machine learning literature. We summarize the relationship between this approach and the literature, as well as how our strategy relates to classical domain decomposition methods.

\subsection{Data-driven DEC/FEEC and Dirichlet-to-Neumann maps}\label{sec:decfeec}
In traditional numerical analysis the discrete exterior calculus (DEC) is a framework for constructing and analyzing staggered finite volume schemes \cite{hirani2003discrete,nicolaides1992direct}. 
The generalized Stokes theorem is used to define discrete vector calculus operators (e.g. grad/curl/div) which map between differential forms on a pair of primal/dual computational meshes. 
The finite element exterior calculus (FEEC) generalizes DEC by constructing finite element spaces which interpolate differential forms and provides variational extensions \cite{arnold2018finite}. 

In the data-driven exterior calculus (DDEC) \cite{trask2022enforcing}, DEC operators are parameterized in a manner allowing the learning of well-posed models on graphs, where data is used to identify the inner-product associated with codifferential operators.
In \cite{actoradata}, it was shown that a family of data-driven Whitney forms may be constructed from parameterized partitions-of-unity (POUs).
The Whitney forms admit a de Rham complex which encodes POU geometry as differentiable control volumes and their higher order boundaries (faces/edges/etc) without reference to a traditional mesh. 
An inner-product is induced by the geometry of the control volumes, supporting the discovery of models in terms of control volume balances.
This allow a data-driven FEEC extension of DDEC which we use extensively in this work. 
Furthermore, by posing integral balances as relationships between domains and fluxes on their boundaries, we work with degrees of freedom which naturally conform to the trace spaces necessary for a mortar strategy.

\subsection{Structure-preserving ML vs. physics-informed ML} 

In the recent scientific machine learning literature, physics-informed methods broadly encompass frameworks where physical constraints are incorporated by adding (typically collocation) residuals to a loss function as a Tikhonov regularization with a penalty parameter \cite{cai2021physics}. 
This technique is simple to implement and, when used together with automatic differentiation, admits a simple treatment of inverse problems, discovery of ``missing physics'' or closures \cite{karniadakis2021physics,patel2022thermodynamically}, and uncertainty quantification \cite{yang2019adversarial,zhang2019quantifying}. 

The flexibility of the framework comes at the expense of solving a multi-objective optimization problem whereby the physics residual must be empirically weighted against the data loss, and can only be enforced to within optimization error \cite{wang2021understanding}. 
For certain classes of problems it is necessary to enforce physics to machine precision to obtain qualitatively correct answers; e.g. subsurface transport and lubrication flows depend crucially on exact conservation of mass \cite{trask2018compatible}, while electromagnetic problems which fail to provide an exactly divergence-free magnetic field predict qualitatively incorrect spectra \cite{arnold2018finite}. 
In the context of physics-informed learning, some works have pursued a penalty-based domain decomposition strategy with the goal of efficient distributed computation and more flexibility in neural network approximation \cite{jagtap2021extended}.
While effective, the collocation scheme and penalty formulation complicate analysis and preclude exact conservation, respectively. Because the desired conservation structure only holds to within optimization error, penalization may be insufficient for certain classes of applications.

\subsection{Choice of mortar scheme} 
Domain decomposition is a mature field, with many established options for how to couple solutions across arbitrary finite element subdomains \cite{toselli2004domain, smith1997domain}.
Representative rigorous methods range from (e.g. finite element tearing and interconnecting (FETI) \cite{farhat2001feti}, mortar methods \cite{bernardi1993domain}, and hybridizable discontinuous Galerkin methods \cite{cockburn2009unified}) impose continuity of fluxes and state at subdomain interfaces either strongly via Lagrange multipliers or weakly by using Nietsche's trick to introduce a variational penalty. 

For the div-grad problem, there is also a choice of working in either $H^1$- or $H(\operatorname{div})$-conforming spaces (e.g. $\mathbb{P}_1$/Nedelec or Raviart-Thomas/$\mathbb{P}_0$ mixed spaces), and whether one chooses to apply a mortar on the state or flux variables. 
Working in $H(\operatorname{div})$ is perhaps most natural, as the mortar space admits interpretation as a conservative flux that trivially preserves conservation structure \cite{arbogast2007multiscale}. 
However, this requires working with $d$- and $(d-1)$- dimensional Whitney forms. 
Our Whitney form construction scales with computational complexity $\mathcal{O}(N^k)$, where $N$ is the dimension of $0$\textsuperscript{th}-order Whitney forms and $k$ is the maximal order Whitney form. 
It is therefore preferable to exploit primal/dual structure and work in $H^1$, meaning only $0$\textsuperscript{th}- and $1$\textsuperscript{st}-order Whitney forms are used. 
This forces us to adopt an $H^1$ domain decomposition strategy similar to that developed by Glowinski and Wheeler \cite[\S 7]{glowinski1987domain}.
Further extensions are needed to easily incorporate and analyze the case where data-driven FEEC elements are used as the local solvers. 



\section{Local learning of Whitney form elements}\label{sec:whitneysec}
For brevity, we discuss only the fundamental aspects of data-driven DEC/FEEC necessary to describe the local element construction. 
For a complete exposition we direct readers to references for: data-driven exterior calculus \cite{trask2022enforcing}, data-driven finite element exterior calculus \cite{actoradata}, classical finite element exterior calculus for forward simulation \cite{arnold2018finite}, and Whitney forms \cite{gillette2016construction}.

Given a compact domain $\omega \in \mathbb{R}^2$ with finite open cover $\{U_i \}_{i=1}^N$, a \textit{partition of unity} (POU) is a collection of functions $\phi_i: \omega \rightarrow [0,1]$ such that $\phi_i(\bm{x})\geq 0$, $\operatorname{supp}(\phi_i) \subseteq U_i$, $\phi_i < \infty$ and $\sum_{i} \phi_i(\bm{x}) = 1$ for all $\bm{x} \in \omega$. 
We assume access to a \textit{parameterized POU} (PPOU) $\left\{\phi_i(\bm{x};\theta)\right\}_{i=1}^N$, which is continuous with respect to a parameter $\theta$. 

To construct Whitney forms, any trainable PPOU may be used, although in this work we adopt the same used in \cite{actoradata}. Starting with tensor-product B-splines on the unit domain, we refer to trainable vertex locations as \emph{fine-scale nodes/knots}. To approximate complex geometries, we consider a coarsening via convex combinations of the knots into our ultimate PPOU, $\left\{\phi_i(\bm{x};\theta)\right\}_{i=1}^N$ where $\theta$ denotes parameters corresponding to both knot locations and trainable entries of the convex combination tensor; see \cref{fig:convex-comb} for an illustrative figure of this process.

In \cite{actoradata}, the tensor-product grid points are parameterized using the distances between the grid points to avoid inversion of elements.
In particular, we can define the grid points in one dimension $\{t_i\}_{i=0}^n$ with $t_0 = 0$ and $t_1 = 1$ by parameterizing 
\begin{align}
    \sigma\left( \delta\right)_i= t_{i + 1} - t_i, \qquad i \in \{0, \ldots, n-1\}
\end{align}
where $\delta_i$ is a trainable parameter, and $\sigma$ is a sigmoid activation enforcing positivity. To parameterize a map of convex combination of knots, we consider a trainable two-tensor with softmax activation applied to each row; for details we refer to \cite{actoradata}. In what follows we adopt the simplified notation $\phi_i(\bm{x};\theta) = \phi_i$.
\begin{figure}
    \centering
    \includegraphics[width=.5\textwidth]{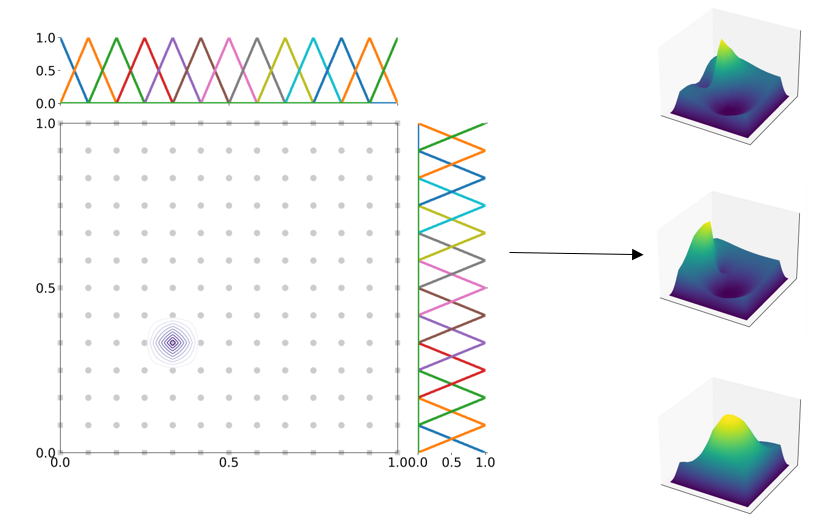}
    \caption{To construct a PPOU, we first consider an underlying tensor product grid of B-splines with trainable vertex locations. By taking a trainable convex combination of these shape functions, we arrive at more complex geometries. 
    Noting that B-splines form a partition of unity, and that partitions of unity are closed under convex combination, this process provides a trainable partition of unity which may be integrated exactly via a pull-back onto the fine grid.
    For purposes of illustration, the underlying tensor product is shown to be uniform in the figure, but they are allowed to shift in the general case.
    }
    \label{fig:convex-comb}
\end{figure}

We construct finite element spaces consisting of the $0$\textsuperscript{th}-, $1$\textsuperscript{st}- and $2$\textsuperscript{nd}-order Whitney forms from $\phi_i$:
\begin{equation} \label{eqn:whitneyspaces}
    \begin{aligned}
V^0 &:= \operatorname{span}\left\{\phi_i \mid 1 \le i \le N \right\}, \\
V^1 &:= \operatorname{span}\left\{ \phi_i \nabla \phi_j - \phi_j \nabla \phi_i \mid 1 \le i,j \le N \right\}, \\
V^2 &:=  \operatorname{span}\left\{ \phi_i \nabla \phi_j \times \nabla \phi_k - \phi_j \nabla \phi_i \times \nabla \phi_k - \phi_k \nabla \phi_j \times \nabla \phi_i \mid  1 \le i,j,k \le N \right\},
\end{aligned}
\end{equation} 
adopting the notation $\psi_{j_{1},\dots,j_K} \in V^{k-1}$,  to identify elements of spaces by their constituent 0-forms (e.g. $\psi_{ij} \in V^1$). 
As shown in \cite{actoradata}, the tensor used to parameterize convex combinations of B-splines may be manipulated to obtain modifications of these spaces with zero trace
\begin{equation}\label{eqn:whitney-bc}
    V^k_0 := \left\{ u \in V^k \,\big\vert\, u|_{\partial \omega} = 0\right\}.
\end{equation}

Consider now the variational form of divergence $(q,\nabla\cdot \mat {u})$ and curl $(\mat {v},\nabla \times \mat{w})$, where $q\in V^0$, $\mat{u}\in V^1_0$, $\mat{v} \in V^1$, and $\mat{w} \in V^2_0$. After integration by parts, Whitney forms induce the following discrete vector calculus operators \cite[\S 3]{actoradata}
\begin{equation}\label{eqn:discretederivatives}
    \begin{aligned}
(\mathsf{DIV})_{i, (ab)} &:= (\psi_{ab}, -\nabla \psi_i) =  \sum_{j \neq i}(\psi_{ab}, \psi_{ij}), \\
(\mathsf{CURL})_{(ij), (abc)} &:=(\psi_{abc}, \nabla \times \psi_{ij}) = 2 \sum_{k \neq i,j} (\psi_{abc}. \psi_{ijk}).
\end{aligned}
\end{equation}

These discrete exterior derivatives maintain a powerful connection to the graph exterior calculus from combinatorial Hodge theory. 
Consider a complete graph $\mathcal{G} = (\mathcal{V}, \mathcal{E})$ with the vertex set $\mathcal{V}$, edge set $\mathcal{E}$, and higher-order $k$-cliques denoted by the oriented tuples $(i_1,\dots,i_k)$.
The standard $k$\textsuperscript{th}-order coboundary operator $\delta_k$ is simply associated with the oriented incidence matrix between $k+1$- and $k$-cliques. 
Specifically, the graph gradient $\delta_0$ and graph curl $\delta_1$ are defined by
\begin{align*}
    (\delta_0 u)_{ij} &= u_j - u_i \\
    (\delta_1 u)_{ijk} &= u_{ij} + u_{jk} + u_{ki},
\end{align*}
where $u_i$ denote a scalar value associated with the node $i$, $u_{ij} = -u_{ji}$ denotes a scalar associated with the edge $(i,j) \in \mathcal{E}$, and $u_{ijk}$ a value associated with the 3-cliques (e.g. faces) which is anti-symmetric with respect to the index ordering 
\begin{align*}
    u_{ijk} = -u_{ikj} = -u_{jik} = -u_{kji} = u_{kij} = u_{jki}.
\end{align*}

The adjoint of coboundary operators induces the so-called codifferential operators, which in this setting provide definitions of graph divergence and curl:
\begin{equation}
    \begin{aligned}
        (DIV \, u)_i  &:= (\delta_0^T u)_i = \sum_{j \neq i} u_{ij},\\
    (CURL \, u)_{ij}&:= (\delta_1^T u)_i= \sum_{k \neq i,j} u_{ijk}.
\end{aligned}
\end{equation}
These graph operators have a number of properties mimicking the familiar vector calculus, but follow only from the topological properties of graphs. 
For example, the \textit{exact sequence} property $DIV \circ CURL = 0$ discretely parallels $\nabla\cdot \nabla \times= 0$, and conservation structure is reflected in $DIV$ calculating the sum of anti-symmetric generalized fluxes. 

The connection between the parameterized Whitney form space and the combinatorial Hodge theory follows by rewriting \cref{eqn:discretederivatives} as
\begin{equation*}
	\mathsf{DIV} = DIV \, \mathbf{M}_1, \quad \mathsf{CURL} = CURL \, \mathbf{M}_2
\end{equation*} 
where $(\mathbf{M}_1)_{(ij),(ab)} = (\psi_{ab}, \psi_{ij})$ and ($\mathbf{M}_2)_{(ijk), (abc)}=(\psi_{abc}, \psi_{ijk})$ are mass matrices associated with the finite element spaces $V^1$ and $V^2$, respectively. 
Therefore, we see that the geometry of the PPOUs implicitly induces a weighting on the graph exterior calculus, with the boundaries of learned partitions inducing a topology associated with conservation structure. 

We may finally revisit the original task of identifying a model of the form \cref{eq:localmortargeneralcase}. 
Let the Whitney forms associated with subdomain $\Omega_i$ be $V^0(\Omega_i)$ and $V^1(\Omega_i)$ by taking $\omega = \Omega_i$. 
Mirroring \cref{eq:localmortargeneralcase}, the model on each individual subdomain is equivalent to the following variational problem: find $(p_i,\mat {u}_i) \in V^0(\Omega_i) \times V^1(\Omega_i)$ such that for all $(w_i, \mat v_i) \in V^0_0(\Omega_i) \times V^1(\Omega_i) $,
\begin{align*}
    (\mat u_i,\mat v_i) - (h(p_i;\theta_i), \mat v_i) &= 0 \\
    (\mat u_i, \nabla w_i) &=(f_i, w_i) 
\end{align*}
with Dirichlet boundary condition $p_i = g_i$ on $\partial \Omega$, which is enforced by using a standard lift.

Following the theory laid out in \cite{trask2022enforcing}, we could assume the unknown fluxes take the form of a nonlinear perturbation of a diffusive flux while maintaining a tractable stability analysis, e.g. 
\begin{align*}
    h(p_i;\theta_i) = \nabla p_i + N[p_i;\theta_i]
\end{align*}
However in the current work, we will consider only the linear case ($N[p_i;\theta_i]=0$).
In this setting the Whitney forms will identify the geometry and properties associated with material heterogeneities under an assumed diffusion process, providing the following variational problem on each element.
\begin{equation}\label{eqn:early-local-element}
    \begin{aligned}
    (\mat u_i,\mat v_i) - (\nabla p_i, \mat v_i) &= 0 \\
    (\mat u_i, \nabla w_i) &=(f_i, w_i).
\end{aligned}
\end{equation}

Finally we substitute in the discrete exterior derivatives associated with the PPOUs to obtain a discrete parametric model, posing the following equality constrained optimization problem to calibrate the POU geometry to data,
\begin{equation}\label{eqn:h1loss} \begin{split}
\min_{W, \bm{B}_0, \bm{B}_1, \bm{D}_0 \bm{D}_1}\,
& \norm{ p_{\text{data}} - \sum_i \hat{p}_i \psi_i}_2^2
+ \alpha^2 \norm{ F_\text{data} - \sum_{ij} \hat{F}_{ij} \psi_{ij} }_2^2 \\
\text{such that }&
\begin{bmatrix} {\mathbf{M}_1} & - {\mathbf{M}_1} \mathbf{D}_1^{-1} \delta_0 \mathbf{D}_0 \\ - \mathbf{B}_0^{-1} \delta_0^T \mathbf{B}_1 {\mathbf{M}_1} & \mathbf{0} \end{bmatrix} \begin{bmatrix} \widehat{\mathbf{F}} \\ \widehat{\mathbf{p}} \end{bmatrix} = \begin{bmatrix} \mathbf{b}_D \\ - \mathbf{b}_f \end{bmatrix} \\
\end{split}, \end{equation}
where $\mathbf{B_k}$ and $\mathbf{D}_k$ are diagonal matrices with trainable positive coefficients, $\mathbf{b}_D, \mathbf{b}_f$ the terms arising from the Dirichlet boundary condition and forcing term respectively, $\alpha$ a normalization parameter, and $W$ the remaining weights associated with the POUs such as the location of knots and the convex combination tensor. 
As shown in \cite{actoradata}, $\mathbf{B_k}$ and $\mathbf{D}_k$ infer metric information from data without impacting the topological structure of the model.
For further details regarding the specific construction of POUs we refer to \cite{actoradata}.

\begin{remark}
    The Whitney form construction supports a number of theoretical constructions: a Hodge decomposition, Poincare inequality, a corresponding Lax-Milgram theory, a well-posedness theory for certain nonlinear elliptic problems, and discrete preservation of exact sequence properties which exactly preserve conservation structure. When we use the Whitney form elements to construct the subdomain spaces $V_i$ in the mortar method in the following section, we aim to carefully construct the mortar space so that this structure is not lost at the global level.
\end{remark}

\section{Mortar Method}\label{sec:mortarsec}
After the local models are trained, we seek to construct a mortar method which is flexible enough to couple FEEC elements on the different subdomains together. 
Note that since the fine-scale knots are able to move during pre-training, the mortar is necessarily non-conforming, with possible ``hanging'' mortar nodes which do not coincide with the neighboring local element nodes; this necessitates an analysis of stability associated with projecting between local and mortar spaces.
Furthermore, we would like the mortar method to preserve the conservation and stability properties outlined in the introduction ({\bf R1}, {\bf R2}). 

As discussed in \cref{sec:whitneysec}, we assume that our data $\{(\mat u(x_k), p(x_k)), g_k\}_{k=0}^N$ (with $x_k \in \Omega$ sampled randomly) satisfy the following variational equation: seek solution $(\mat u, p) \in (L^2(\Omega)^2, H^1_g(\Omega))$ such that
\begin{equation}\label{eqn:primal-big}
\begin{aligned}
        	(\mat u, \mat v) - (K\nabla p, \mat v) &= 0, &\forall \mat v \in L^2(\Omega)^2  \\
	(\mat u, \nabla w)  &= (f, w), &\forall w \in H^1_0(\Omega)
\end{aligned}
\end{equation}
where $H^1$ is the standard Sobolev space and $H^1_g(\Omega) = \{u \in H^1(\Omega) \mid u|_{\partial \Omega} = g_k\}$ \cite{braess2007finite}, and the tensor $K \in L^\infty$ is a positive-definite matrix.
Finally, we assume the problem is of at least $p \in H^{3/2}(\Omega)$ regularity, which arises naturally if, for example, $f \in L^2(\Omega), g \in H^{3/2}(\partial \Omega)$ with Lipschitz coefficients $K$ and $\Omega$ is convex \cite{grisvard2011elliptic}.
We will see in our numerical results that the above regularity result is a sufficient condition for the error analysis, and not a necessary one. 

Let $\Omega$ be divided into $n$ non-overlapping, polygonal subdomain blocks $\Omega_i$ of similar aspect ratios.
Let $\Gamma_{i}$ be the edges of $\Omega_i$, $\Gamma = \cup_i \Gamma_i$ the set of all boundaries of the subdomains (including those intersecting $\partial \Omega$), and let $\Gamma_{ij} = \Gamma_i \cap \Gamma_j$ for all $i, j$ be the boundary between two adjacent subdomains. 
See \cref{fig:gamma} for an illustrative figure. 

Define 
\begin{align}\label{eqn:lambda-space-def}
	\Lambda := \{v \in L^2(\Gamma) \mid \exists u \in H^1(\Omega), u|_{\Gamma} = v \}
\end{align}
as the space of $L^2$ functions on the interfaces which are the traces of $H^1$ functions, and the subspaces 
\begin{align*}
    \Lambda_0 &:= \{\lambda \in \Lambda \mid \lambda|_{\partial \Omega} = 0\}, \\
    \Lambda_g &:= \{\lambda \in \Lambda \mid \lambda|_{\partial \Omega} = g\}.
\end{align*}
Note that since $\Lambda$ consists of the trace of $H^1$ functions, we may endow $\Lambda$ with the $H^{1/2}$ norm on $\Gamma$. 

\begin{figure}[tb]
    \centering
\begin{tikzpicture}[scale=2.5]
\draw[dashed] (0, 0) grid (2, 2);
\draw[thick] (0, 0) -- (2, 0) -- (2, 2) -- (0, 2) -- cycle;
\draw[line width=5pt, gray, opacity=.4] (0, 1) -- (1, 1) -- (1, 0) -- (0, 0) -- cycle;
\draw (.5, .5) node {$\Omega_1$};
\draw (1.5, .5) node {$\Omega_2$};
\draw (.5, 1.5) node {$\Omega_3$};
\draw (1.6, 1.5) node {$\Omega_4$};
\draw[ultra thin, decoration={brace,mirror,raise=1pt},decorate]
  (1,1) -- node[scale=.4, right=12] {$\Gamma_{3,4}$} (1, 2);
\end{tikzpicture}
    \caption{Figure of a square domain $\Omega$ divided into four subdomains.
    The edge $\Gamma_{3,4}$ is denoted explicitly and the highlighted boundary is $\Gamma_1$.}
    \label{fig:gamma}
\end{figure}
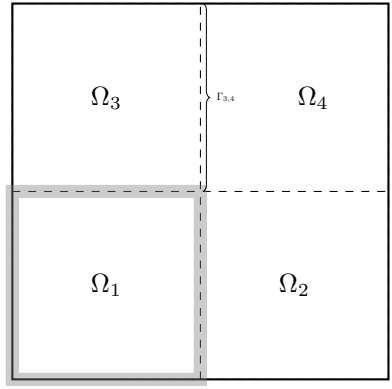

\subsection{Stability analysis for continuous case}
Before proceeding to the model discovery problem and the discrete, we first consider smooth solutions coming from solutions from diffusion problem to guide the design of a suitable mortar method.
It is straightforward to decompose \cref{eqn:primal-big} into problems on the subdomains $\{\Omega_i\}_{i=1}^n$ by introducing a mortar representing the pressure on the space $\Lambda$:
\begin{lemma}\label{lem:cont-subdomains}
	For $1 \le i \le n$, let $(\mat u_i, p_i, \lambda) \in (L^2(\Omega_i)^2, H^1(\Omega_i), \Lambda_g)$ such that

\begin{equation}\label{eqn:original-subdomain}
\begin{aligned}
    	(\mat u_i, \mat v_i)_{\Omega_i} - (K\nabla p_i, \mat v_i)_{\Omega_i} &= 0, &\forall \mat v_i \in L^2(\Omega_i)^2 \\
	(\mat u_i, \nabla w_i)_{\Omega_i}  &= (f, w_i)_{\Omega_i}, &\forall w_i \in H^1_{0}(\Omega_i)
\end{aligned}
\end{equation}	
	with continuity of state and flux enforced via the boundary condition $p_i|_{\Gamma_i} = \lambda|_{\Gamma_i}$
	and weak flux continuity condition
\begin{align}\label{eqn:basic-h10}
	\sum_{i=1}^n (\mat u_i, \nabla w)_{\Omega_i}  &= (f, w), &\forall w \in H^1_{0}(\Omega).
\end{align}
	Then $\mat u = \sum_{i=1}^n \mat u_i \in L^2(\Omega)^2, p = \sum_{i=1}^n p_i \in H^1_g(\Omega)$ solves \cref{eqn:primal-big}.
\end{lemma}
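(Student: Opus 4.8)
The plan is to verify directly that the glued fields satisfy \cref{eqn:primal-big}, treating the two equations separately. The first equation of \cref{eqn:primal-big} is essentially local: since $\mat u = \sum_i \mat u_i$ and $p = \sum_i p_i$ agree with $\mat u_i$, $p_i$ on the interior of each $\Omega_i$ (the subdomains being non-overlapping), for any global test function $\mat v \in L^2(\Omega)^2$ we may restrict $\mat v$ to each $\Omega_i$ and sum the local constitutive relations $(\mat u_i,\mat v|_{\Omega_i})_{\Omega_i} - (K\nabla p_i,\mat v|_{\Omega_i})_{\Omega_i} = 0$. Because $L^2$ integrals decompose over the partition $\Omega = \cup_i\Omega_i$ and the interfaces $\Gamma$ have measure zero, this immediately gives $(\mat u,\mat v) - (K\nabla p,\mat v) = 0$ for all $\mat v\in L^2(\Omega)^2$, provided $\nabla p$ makes sense globally as the piecewise gradient.

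The second, and more delicate, point is to show $p \in H^1_g(\Omega)$, i.e.\ that the piecewise-$H^1$ function $p$ is actually globally $H^1$ with the correct trace on $\partial\Omega$, so that its piecewise gradient is the genuine distributional gradient. This is where the boundary condition $p_i|_{\Gamma_i} = \lambda|_{\Gamma_i}$ does its work: on each shared interface $\Gamma_{ij}$ the traces of $p_i$ and $p_j$ both equal $\lambda|_{\Gamma_{ij}}$, hence agree, which is exactly the standard criterion for a piecewise-$H^1$ function across a Lipschitz interface to belong to $H^1(\Omega)$ (no jump across interfaces). The condition $\lambda\in\Lambda_g$ then pins the trace on $\partial\Omega$ to $g$, giving $p\in H^1_g(\Omega)$. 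Similarly one notes $\mat u\in L^2(\Omega)^2$ trivially.

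Finally, the global flux equation $(\mat u,\nabla w) = (f,w)$ for all $w\in H^1_0(\Omega)$ is precisely the weak flux continuity condition \cref{eqn:basic-h10}, since $(\mat u,\nabla w) = \sum_i(\mat u_i,\nabla w)_{\Omega_i}$ by the same measure-zero-interface decomposition, and any $w\in H^1_0(\Omega)$ restricts to an admissible $w_i = w|_{\Omega_i}\in H^1(\Omega_i)$ (not necessarily in $H^1_0(\Omega_i)$, which is why the extra coupling equation \cref{eqn:basic-h10} is needed on top of the per-subdomain equations that only test against $H^1_0(\Omega_i)$). Assembling these three observations yields that $(\mat u,p)$ solves \cref{eqn:primal-big}.

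I expect the main obstacle to be the rigorous justification that matching traces across interfaces upgrades piecewise $H^1$ regularity to global $H^1$ regularity — in particular being careful that $\lambda$, as an element of $\Lambda$, has enough regularity ($H^{1/2}$ on $\Gamma$) for the interface traces to be compared in the right space, and handling the geometry of the interface skeleton $\Gamma$ (edges meeting at corners of the polygonal subdomains) so that the gluing lemma applies. Everything else is bookkeeping with the additivity of $L^2$ inner products over the partition.
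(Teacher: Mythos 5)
Your proof is correct and follows essentially the same route as the paper's: sum the local constitutive relations against restricted test functions, use the shared trace $\lambda$ to glue the $p_i$ into a single function in $H^1_g(\Omega)$, and observe that the weak flux continuity condition supplies the global balance equation. The only difference is cosmetic --- the paper decomposes a global test function $w$ as $\sum_i w_i + w_0$ with $w_i \in H^1_0(\Omega_i)$ before invoking \cref{eqn:basic-h10} (a step that anticipates the later reduction to $H^\gamma_0$ but is not needed here, since \cref{eqn:basic-h10} as stated already tests against all of $H^1_0(\Omega)$, exactly as you note) --- and your treatment of the trace-matching/gluing step is somewhat more explicit than the paper's one-line remark that ``continuity is enforced with $\lambda$.''
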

\begin{proof}
    The existence of functions $(\mat u_i, p_i)$ and $\lambda$ comes trivially by restricting the solution from \cref{eqn:primal-big} to the individual subdomains and mortar space. 

    To see that \cref{eqn:original-subdomain,eqn:basic-h10} implies \cref{eqn:primal-big}, 
    we note that $L^2(\Omega)^2 = \bigoplus_{i=1}^n L^2(\Omega_i)^2$, and thus by summing the first equation of \cref{eqn:original-subdomain} and choosing $\mat {v}_i = \mat {v}|_{\Omega_i}$ as test functions, we have
    \begin{align*}
        \left(\sum_{i=1}^n \mat u_i, \mat v\right) - \left(K\nabla \sum_{i=1}^n p_i, \mat v\right) &= 0, &\forall \mat v \in L^2(\Omega)^2
    \end{align*}
    with $\sum_{i=1}^n p_i \in H^1_g(\Omega)$ since continuity is enforced with $\lambda$.
    As for the test functions arising in $w \in H^1_0(\Omega)$, we simply decompose $w$ into $\sum_{i=1}^n w_i + w_0$ where $w_i \in H_0^1(\Omega_i)$ for $1 \le i \le n$ and $w_0 := w - \sum_{i=1}^n w_i$, so that the summation of the second equation of \cref{eqn:original-subdomain} and \cref{eqn:basic-h10} gives us the desired result.
\end{proof}

The condition \cref{eqn:basic-h10} can be simplified. 
Consider the space $H^\gamma(\Omega)$ satisfying the decomposition 
\begin{align}\label{eqn:decomposition}
	H^1(\Omega) &= H_0^1(\Omega_1) \oplus \cdots \oplus H_0^1(\Omega_n) \oplus H^\gamma(\Omega)
\end{align}
with $H^\gamma \perp H_0^1(\Omega_i)$ relative to the $H^1$ norm for each $i$. 
Then, using to \cref{eqn:original-subdomain} and \cref{eqn:decomposition},  \cref{eqn:basic-h10} can be rewritten as
\begin{align}\label{eqn:good-condition}
	\sum_{i=1}^n (\mat u_i, \nabla w)  &= (f, w), &\forall w \in H^\gamma_0(\Omega)
\end{align}
where $H_0^\gamma := \{u \in H^\gamma(\Omega) \mid u|_{\partial \Omega} = 0 \}$.
We also define the subset $H_g^\gamma := \{u \in H^\gamma(\Omega) \mid u|_{\partial \Omega} = g \}$.
The space $H^\gamma$ corresponds to a minimal energy extension \cite{toselli2004domain} as the following lemma shows:
\begin{lemma}
    For all $u \in H^1(\Omega)$, there exists a unique decomposition $u = u_\gamma + \sum_{i=1}^n u_i$ such that $u_\gamma \in H^\gamma(\Omega), u_i \in H_0^1(\Omega_i)$. 
    Furthermore, one has 
    \begin{align*}
        \norm{u_\gamma}_{H^1(\Omega)} = \inf_{v \in H^1(\Omega), v|_{\Gamma} = u} \norm{v}_{H^1(\Omega)} \simeq \sum_{i=1}^n \norm{u_\gamma}_{H^{1/2}(\Gamma_i)}.
    \end{align*}
\end{lemma}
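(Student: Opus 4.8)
The plan is to recognize the decomposition \cref{eqn:decomposition} as the orthogonal decomposition of $H^1(\Omega)$, equipped with its standard inner product, against the closed subspace $V_0 := H_0^1(\Omega_1)\oplus\cdots\oplus H_0^1(\Omega_n)$ (identified inside $H^1(\Omega)$ by extension by zero), with $H^\gamma(\Omega) := V_0^\perp$. Existence and uniqueness of the splitting then follow from the projection theorem, the minimal-energy equality is a direct consequence of orthogonality, and the norm equivalence follows from the trace theorem applied subdomain by subdomain once $u_\gamma|_{\Omega_i}$ is identified as a local minimal-energy extension.

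First I would verify that $V_0$ is a closed subspace of $H^1(\Omega)$: extension by zero maps $H_0^1(\Omega_i)$ isometrically onto $\{v\in H^1(\Omega)\mid v=0 \text{ a.e. on }\Omega\setminus\Omega_i\}$, which is closed, and the $n$ such images are mutually $H^1(\Omega)$-orthogonal since they have essentially disjoint supports; a finite orthogonal sum of closed subspaces is closed. Moreover, using the $H^1$ gluing lemma, $V_0$ coincides with $\{v\in H^1(\Omega)\mid v|_\Gamma=0\}$. Setting $H^\gamma(\Omega):=V_0^\perp$ reproduces \cref{eqn:decomposition}, and for $u\in H^1(\Omega)$ the splitting $u = u_\gamma + \sum_i u_i$ with $u_\gamma = P_{V_0^\perp}u$ and $\sum_i u_i = P_{V_0}u$ exists and is unique, uniqueness of the individual $u_i$ coming from the internal orthogonality of the summands of $V_0$.

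For the minimal-energy identity, note that each $u_i$ vanishes on $\partial\Omega_i\subseteq\Gamma$, so $u|_\Gamma = u_\gamma|_\Gamma$, making $u_\gamma$ admissible in the infimum. For any competitor $v\in H^1(\Omega)$ with $v|_\Gamma = u|_\Gamma$, the difference $v-u_\gamma$ vanishes on $\Gamma$, hence lies in $V_0$; orthogonality of $u_\gamma\in V_0^\perp$ to $V_0$ gives $\norm{v}_{H^1(\Omega)}^2 = \norm{u_\gamma}_{H^1(\Omega)}^2 + \norm{v-u_\gamma}_{H^1(\Omega)}^2 \ge \norm{u_\gamma}_{H^1(\Omega)}^2$, with equality exactly when $v=u_\gamma$. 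Testing the same orthogonality against $H_0^1(\Omega_i)$-functions extended by zero shows $(u_\gamma,w_i)_{H^1(\Omega_i)}=0$ for all $w_i\in H_0^1(\Omega_i)$, i.e.\ $u_\gamma|_{\Omega_i}$ is itself the $H^1(\Omega_i)$-minimal extension of its own trace $u_\gamma|_{\Gamma_i}$.

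Finally, for the norm equivalence I would apply the trace theorem on each $\Omega_i$ in both directions: boundedness of the trace yields $\norm{u_\gamma}_{H^{1/2}(\Gamma_i)}\le C_i\,\norm{u_\gamma}_{H^1(\Omega_i)}$, while a bounded right inverse $E_i\colon H^{1/2}(\Gamma_i)\to H^1(\Omega_i)$ of the trace, together with the minimal-energy property just shown, gives $\norm{u_\gamma}_{H^1(\Omega_i)}\le\norm{E_i(u_\gamma|_{\Gamma_i})}_{H^1(\Omega_i)}\le C_i'\,\norm{u_\gamma}_{H^{1/2}(\Gamma_i)}$. Squaring, summing over $i$, and using $\norm{u_\gamma}_{H^1(\Omega)}^2=\sum_i\norm{u_\gamma}_{H^1(\Omega_i)}^2$ gives $\norm{u_\gamma}_{H^1(\Omega)}\simeq\big(\sum_i\norm{u_\gamma}_{H^{1/2}(\Gamma_i)}^2\big)^{1/2}$, which is the stated equivalence up to the usual $\ell^1$--$\ell^2$ rescaling. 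The main obstacle is making the constants $C_i,C_i'$ uniform in $i$ (and hence in $n$): this is exactly where the hypothesis that the $\Omega_i$ are polygonal with comparable aspect ratios enters --- one maps each $\Omega_i$ to a member of a finite family of reference shapes by an affine change of variables and tracks how the $H^1$ and $H^{1/2}$ norms rescale, so that a single constant serves all subdomains. This uniformity is precisely what the later stability estimate (R2) requires.
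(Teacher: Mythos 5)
Your proposal is correct and follows essentially the same route as the paper: both define $u_\gamma$ as the $H^1$-orthogonal complement of the projection onto $\bigoplus_i H_0^1(\Omega_i)$, obtain the minimal-energy identity from the resulting Pythagorean relation, and reduce the $H^{1/2}$ equivalence to standard trace estimates (which the paper simply cites as well known, whereas you sketch the two-sided trace argument and the uniformity of constants). The extra details you supply --- closedness of $V_0$, its identification with $\{v : v|_\Gamma = 0\}$ via gluing, and the $\ell^1$--$\ell^2$ rescaling caveat --- are correct refinements of the same argument rather than a different approach.
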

\begin{proof}
    Given $u$, consider $u_I \in H_0^1(\Omega_1) \oplus \cdots \oplus H_0^1(\Omega_n) $ such that for $1 \le i\le n$,
    \begin{align*}
        (u_I, v_i)_{H^1(\Omega_i)} = (u, v_i)_{H^1(\Omega_i)}, \qquad \forall v_i \in H_0^1(\Omega_i).
    \end{align*}
    Then the decomposition is simply $u = \sum_{i=1}^n u_I|_{\Omega_i} + u_\gamma$ where $u_\gamma = u - u_I$.
    The orthogonality is enforced since, for all $w_i$ in $H_0^1(\Omega_i)$ and $1 \le i \le n$,
    \begin{align*}
        (u_\gamma, w_i)_{H^1(\Omega)} = (u - u_I, w_i)_{H^1(\Omega_i)} = (u, w_i)_{H^1(\Omega_i)} - (u_I, w_i)_{H^1(\Omega_i)} = 0. 
    \end{align*}
    
    As for the minimal condition, let $v = u_\gamma + \sum_{i=1}^n v_i$ with $v_i \in H_0^1(\Omega_i)$ arbitrary, then by orthogonality
    \begin{align*}
         \norm{v}_{H^1(\Omega)}^2 =  \norm{u_\gamma}_{H^1(\Omega)}^2 + \norm{\sum_{i=1}^n v_i}^2_{H^1(\Omega)} \ge \norm{u_\gamma}_{H^1(\Omega)}^2
    \end{align*}
    and the $H^{1/2}$ equivalence is well known \cite{bertoluzza2000wavelet, cowsar1995balancing}. 
\end{proof}

With the above decomposition, we can further reduce \cref{eqn:primal-big} to be a variational problem only on $H^\gamma$ and $\Lambda$. 
Let $\lambda, \mu \in H^\gamma$, define the bilinear form and linear functional
\begin{align}\label{eqn:bilinear-form-1}
 	b(\lambda, \mu) &= \sum_{i=1}^n (\mat u^*(\lambda), \nabla \mu)_{\Omega_i}
 \end{align}
 and
 \begin{align}\label{eqn:linear-functional-1}
	L( \mu) =(f, \mu)_\Omega - \sum_{i=1}^n (\bar{\mat u}, \nabla \mu)_{\Omega_i}
\end{align}
where $(\mat u^*( \lambda), p^*( \lambda)) \in (L^2(\Omega)^2, H^1(\Omega))$ solves the local problems, for $1 \le i \le n$,
\begin{equation}
\begin{aligned}\label{eqn:star}
	(\mat u^*( \lambda), \mat v) - (K\nabla p^*( \lambda), \mat v) &= 0, &\forall \mat v \in L^2(\Omega_i)^2 \\
	(\mat u^*( \lambda), \nabla w)  &=0,  &\forall w \in H^1_{0}(\Omega_i)
\end{aligned}
\end{equation}
with boundary condition $p^*(\lambda)|_{\Gamma_i} = \lambda|_{\Gamma_i}$, and
where $(\bar{\mat u}, \bar p) \in (L(\Omega)^2, H_0^1(\Omega))$ solves, for $1 \le i \le n$, 
\begin{equation}
\begin{aligned}\label{eqn:bar}
	(\bar{\mat u}, \mat v) - (K\nabla \bar p, \mat v) &= 0, &\forall \mat v \in L^2(\Omega_i)^2 \\
	(\bar{\mat u}, \nabla w)  &= (f, w) , &\forall w \in H^1_0(\Omega_i)
\end{aligned}
\end{equation}
with boundary condition $\bar p|_{\Gamma_i} = 0$.
The bilinear form and linear functional closely resemble those of the $H(\operatorname{div})$ case from \cite{arbogast2007multiscale,arbogast2000mixed}. 
Note that the the problems \cref{eqn:star,eqn:bar} above are local in nature and can be solved in parallel. 

The following lemma shows that one can recover the original variational equations by working with the above bilinear form:
\begin{lemma}\label{lem:cont-sol}
    Let $\lambda \in H^\gamma_g$ be the solution to the variational equation, 
	\begin{align}\label{eqn:cont-bilinear-form}
		b(\lambda, \mu) = L(\mu), \qquad \forall \mu \in H^\gamma_0
	\end{align}
    then $\mat u := \mat u^*(\lambda) + \bar{\mat u}, p := p^*(\lambda) + \bar p$ is the solution to \cref{eqn:primal-big}. 
\end{lemma}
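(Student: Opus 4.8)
The plan is to verify directly that the pair $(\mat u, p)$ assembled from the local solves \cref{eqn:star,eqn:bar} and the mortar $\lambda$ satisfies both equations of \cref{eqn:primal-big}, using three ingredients already established: the direct-sum structure $L^2(\Omega)^2 = \bigoplus_{i=1}^n L^2(\Omega_i)^2$, the orthogonal decomposition \cref{eqn:decomposition} of $H^1(\Omega)$ (together with the fact that its $H^\gamma$ component of a function vanishing on $\partial\Omega$ again lies in $H^\gamma_0$), and the mortar equation \cref{eqn:cont-bilinear-form}. No well-posedness argument is needed here since existence of $\lambda$ and of the local solutions is assumed in the statement.

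First I would check admissibility of $(\mat u, p)$. The flux $\mat u := \mat u^*(\lambda) + \bar{\mat u}$ lies in $L^2(\Omega)^2$ with no gluing argument, by the direct-sum decomposition of $L^2$. For $p := p^*(\lambda) + \bar p$, on each interface edge $\Gamma_{ij}$ the traces of $p^*(\lambda)$ from $\Omega_i$ and from $\Omega_j$ both equal $\lambda|_{\Gamma_{ij}}$ by the boundary conditions in \cref{eqn:star}, while $\bar p$ has zero trace on all of $\Gamma_i$ by \cref{eqn:bar}; hence the traces of $p$ agree across every interface, so $p \in H^1(\Omega)$, and on $\partial\Omega$ we get $p|_{\partial\Omega} = \lambda|_{\partial\Omega} = g$ because $\lambda \in H^\gamma_g$, i.e. $p \in H^1_g(\Omega)$. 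The constitutive equation is then immediate: since any $\mat v \in L^2(\Omega)^2$ splits as $\mat v = \sum_i \mat v|_{\Omega_i}$, adding the first equations of \cref{eqn:star,eqn:bar} with $\mat v_i = \mat v|_{\Omega_i}$ on each $\Omega_i$ gives $(\mat u, \mat v)_{\Omega_i} - (K\nabla p, \mat v)_{\Omega_i} = 0$, and summing over $i$ yields the first equation of \cref{eqn:primal-big}.

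For the balance equation, given $w \in H^1_0(\Omega)$ I would decompose it via \cref{eqn:decomposition} as $w = \sum_{i=1}^n w_i + w_\gamma$ with $w_i \in H_0^1(\Omega_i)$ and $w_\gamma \in H^\gamma$; since $w|_{\partial\Omega} = 0$ and each $w_i$ has zero trace on $\Gamma_i$, the remainder $w_\gamma = w - \sum_i w_i$ lies in $H^\gamma_0$. On $\Omega_i$ only $w_i$ survives among the $w_j$, so $(\mat u, \nabla w)_{\Omega_i} = (\mat u, \nabla w_i)_{\Omega_i} + (\mat u, \nabla w_\gamma)_{\Omega_i}$. In the first term the second equation of \cref{eqn:star} annihilates the $\mat u^*(\lambda)$ part and the second equation of \cref{eqn:bar} gives $(\bar{\mat u}, \nabla w_i)_{\Omega_i} = (f, w_i)_{\Omega_i}$, so summing over $i$ produces $(f, \sum_i w_i)$. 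For the $w_\gamma$ terms, $\sum_i (\mat u^*(\lambda), \nabla w_\gamma)_{\Omega_i} = b(\lambda, w_\gamma)$ by \cref{eqn:bilinear-form-1}, which by \cref{eqn:cont-bilinear-form} equals $L(w_\gamma) = (f, w_\gamma)_\Omega - \sum_i (\bar{\mat u}, \nabla w_\gamma)_{\Omega_i}$ from \cref{eqn:linear-functional-1}; adding back $\sum_i (\bar{\mat u}, \nabla w_\gamma)_{\Omega_i}$ leaves exactly $(f, w_\gamma)$. Combining, $(\mat u, \nabla w) = (f, \sum_i w_i) + (f, w_\gamma) = (f, w)$, the second equation of \cref{eqn:primal-big}.

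The only delicate point is the trace bookkeeping — confirming that $p$ is globally $H^1$ with the correct Dirichlet value and that the $H^\gamma$-remainder of a test function in $H^1_0(\Omega)$ again has vanishing outer trace — since the rest is a direct rearrangement of the local equations into the mortar equation. I do not expect a genuine obstacle, only care in tracking which boundary conditions are homogeneous and in using \cref{eqn:decomposition} rather than the raw condition \cref{eqn:basic-h10}.
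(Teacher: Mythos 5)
Your proof is correct and follows essentially the same route as the paper: sum the local problems \cref{eqn:star,eqn:bar} to recover the subdomain equations, and use the mortar equation \cref{eqn:cont-bilinear-form} to supply the flux-continuity condition on the $H^\gamma_0$ component of the test function. The only difference is presentational — the paper invokes \cref{lem:cont-subdomains} and the equivalence of \cref{eqn:basic-h10} with \cref{eqn:good-condition}, whereas you inline that decomposition argument (and spell out the interface trace-matching for $p$) directly.
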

\begin{proof}
	Summing \cref{eqn:star,eqn:bar} results in
	\begin{align*}
			({\mat u}, \mat v) - (K\nabla p, \mat v) &= 0, &\forall \mat v \in  L^2(\Omega_i)^2  \\
	({\mat u}, \nabla w)  &= (f, w) , &\forall w \in H^1_0(\Omega_i)
	\end{align*}
	with $p|_{\Gamma} = \lambda$ for each $1 \le i \le n$. 

	It remains to check \cref{eqn:good-condition}, but this is simply because if \cref{eqn:cont-bilinear-form} holds, then 
	\begin{align*}
		\sum_{i=1}^n (\mat u, \nabla \mu)_{\Omega_i} = (f, \mu)
	\end{align*}
	for all $ \mu \in H^\gamma_0$ and the results follows from \cref{lem:cont-subdomains} and \cref{eqn:good-condition}.
\end{proof}

Finally, we note that the variational equation \cref{eqn:cont-bilinear-form} is well-defined as the bilinear form is coercive as shown in the following lemma, whose proof is delayed until the appendix: 
\begin{lemma}
    \label{lem:continuous-coer}
	The bilinear form \cref{eqn:bilinear-form-1} is symmetric and coercive on $\Lambda_0$.
\end{lemma}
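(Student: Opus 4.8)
The plan is to exploit that $\mat u^*(\lambda) = K\nabla p^*(\lambda)$ is $K$-harmonic on each $\Omega_i$, so that the form \cref{eqn:bilinear-form-1} depends only on the trace data and reduces to a manifestly symmetric energy. First I would note that any $\mu \in H^1(\Omega)$ admits, on each subdomain, the splitting $\mu|_{\Omega_i} = p^*(\mu|_\Gamma)|_{\Omega_i} + \mu_0^{(i)}$ with $\mu_0^{(i)} \in H^1_0(\Omega_i)$, since the two pieces share the trace $\mu|_{\Gamma_i}$. Substituting into \cref{eqn:bilinear-form-1}, using the second line of \cref{eqn:star} to kill $(\mat u^*(\lambda),\nabla \mu_0^{(i)})_{\Omega_i}$, and then the first line of \cref{eqn:star} with test function $\nabla p^*(\mu)$, I obtain
\[
 b(\lambda,\mu) \;=\; \sum_{i=1}^n \bigl(\mat u^*(\lambda),\nabla p^*(\mu)\bigr)_{\Omega_i}
 \;=\; \sum_{i=1}^n \bigl(K\nabla p^*(\lambda),\nabla p^*(\mu)\bigr)_{\Omega_i}.
\]
This identity shows at once that $b$ is well-defined as a form on traces — hence on $\Lambda_0$, under the identification of $\Lambda$ with the minimal-energy extension space $H^\gamma$ of \cref{eqn:decomposition} — and, as $K$ is symmetric, that $b(\lambda,\mu)=b(\mu,\lambda)$.

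For coercivity I would start from $b(\lambda,\lambda)=\sum_i\int_{\Omega_i}\nabla p^*(\lambda)\cdot K\nabla p^*(\lambda)$ and bound below in three steps. (i) By uniform positive-definiteness of $K$ together with the energy-minimizing variational characterization of $p^*(\lambda)|_{\Omega_i}$, the local $K$-energy dominates, up to a fixed constant, the Dirichlet energy of the ordinary harmonic extension of $\lambda|_{\Gamma_i}$ to $\Omega_i$. (ii) The trace theorem on $\Omega_i$, with constants controlled by the assumed uniform aspect ratios of the partition, shows this Dirichlet energy is comparable to $|\lambda|_{H^{1/2}(\Gamma_i)}^2$. (iii) Summing over $i$ and invoking the broken-to-global seminorm equivalence $\sum_i |\lambda|_{H^{1/2}(\Gamma_i)}^2 \simeq |\lambda|_{H^{1/2}(\Gamma)}^2$ already used in the preceding lemma (cf.\ \cite{bertoluzza2000wavelet, cowsar1995balancing}) gives $b(\lambda,\lambda) \ge c\, |\lambda|_{H^{1/2}(\Gamma)}^2$ for some $c>0$. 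Finally, because $\lambda \in \Lambda_0$ vanishes on $\partial\Omega$, a Poincar\'e--Friedrichs inequality on the connected skeleton $\Gamma$ promotes the seminorm to the full norm, yielding $b(\lambda,\lambda)\ge c\,\|\lambda\|_\Lambda^2$.

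The hard part will be the coercivity chain: making steps (i)--(ii) quantitative and \emph{uniform} over all subdomains, so that the coercivity constant depends only on the shape-regularity of the partition and on the spectral bounds of $K$, and not on the number of subdomains; and phrasing the broken versus global $H^{1/2}(\Gamma)$ equivalence and the Friedrichs step on $\Gamma$ consistently with the definition \cref{eqn:lambda-space-def} of $\Lambda$. Symmetry and well-definedness are comparatively routine once the ``drop the interior part'' identity is in place, so I would relegate the detailed constant-tracking to the appendix, as the lemma statement anticipates.
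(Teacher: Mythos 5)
Your proposal follows essentially the same route as the paper: the key identity $b(\lambda,\mu)=\sum_i (K\nabla p^*(\lambda),\nabla p^*(\mu))_{\Omega_i}$ obtained by killing the bubble-function contributions via \cref{eqn:star}, followed by coercivity from positive-definiteness of $K$ and trace/Poincar\'e arguments. The only cosmetic difference is that the paper applies the global Poincar\'e inequality on $\Omega$ (using $p^*(\lambda)\in H^1_0(\Omega)$ for $\lambda\in\Lambda_0$) and then the trace inequality subdomain-by-subdomain, whereas you invoke the harmonic-extension characterization of $|\cdot|_{H^{1/2}(\Gamma_i)}$ locally and a Friedrichs inequality on the skeleton afterwards; both orderings are valid and yield the same estimate.
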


\subsection{Discretized Case}
\begin{figure}[tb]
    \centering

    \includegraphics[width=.6\textwidth]{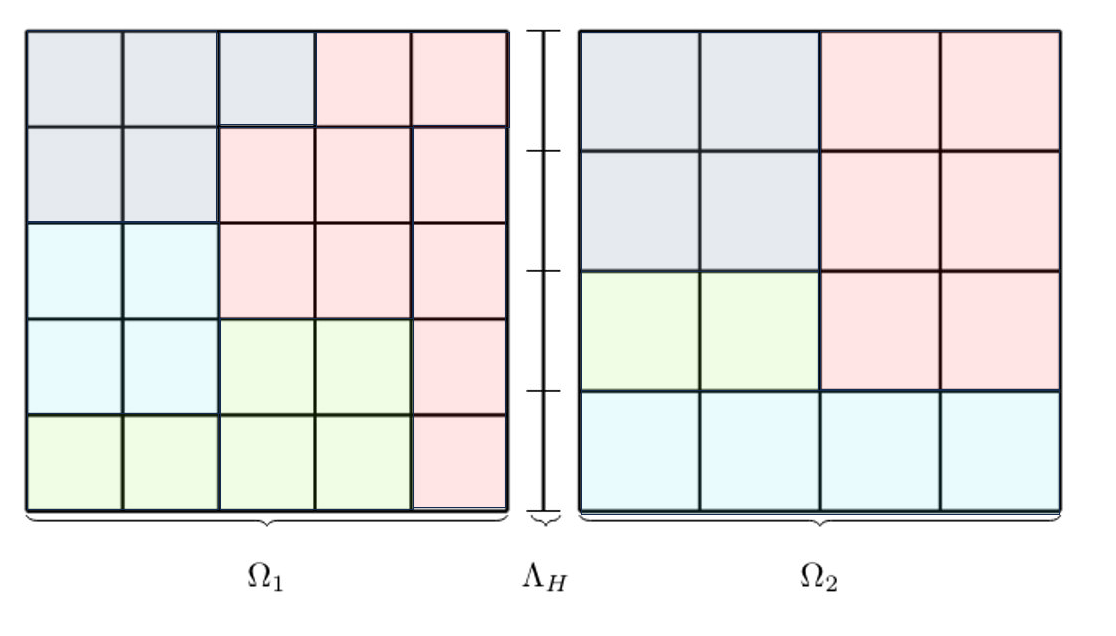}
    \caption{Sketch of a 4 element mortar $\Lambda_H$ and its two adjacent subdomains.
    The colors on the subdomains represent the PPOUs constructed as convex combinations of a fine-scale B-splines.
    We note that while the mortar matches the fine-scale nodes on $\Omega_2$, it is disjoint from $\Omega_1$, requiring analysis of a remap/projection between the two meshes. 
    Because the FEEC fine-scale nodes on $\Omega_i$ evolve during training, they will generally not coincide with mortar nodes.
    }
    \label{fig:mortar-simple}
\end{figure}

The discrete case is more technical, since both the spaces and the bilinear form are discretized as \cref{eqn:star} and \cref{eqn:bar} cannot be solved exactly. Further, care must be taken to treat the non-conforming grids that emerge naturally as nodes between adjacent subdomains evolve.

In what follows, the subscripts $h, H$ will denote a discretized version of a continuous space.
On each subdomain $\Omega_i$, let $W_{hi}, \mat V_{hi}$ be the discretized versions of $H^1(\Omega_i), L^2(\Omega_i)^2$ respectively.
We require the standard inf-sup compatibility between $W_{hi}, \mat V_{hi}$, which in this case is simply the condition $\nabla W_{hi} \subseteq \mat V_{hi}$ \cite{arnold2018finite, braess2007finite}.
In particular, we can choose $W_{hi}$ and $\mat V_{hi}$ to be the spaces $V^0$ and $V^1$ defined in \cref{eqn:whitneyspaces} in the case of FEEC elements; by construction then we have $\nabla V^0 = V^1$.
We will also use the case where local elements are taken to be traditional finite elements to show convergence; in this case we will consider $V^0$ and $V^1$ as continuous $\mathbb{Q}_1$ and lowest-order Nedelec elements, respectively.
Finally, we let $W_{hi, 0}$ be the subspace with homogeneous Dirichlet boundary condition (e.g. \cref{eqn:whitney-bc}).


On the interfaces, we choose $\Lambda_H \subset \Lambda$ to be the space of continuous, piecewise linear functions.
Let $\Lambda_{H, 0} := \{ \mu_H \in \Lambda_H \mid \mu_H |_{\partial \Omega} = 0\}\subset \Lambda_0$ and similarly let $\Lambda_{H, g} \subset \Lambda_g$ be the subset whereby the boundary is equal to $g$.
We allow the discretization between two subdomains to be different and also allow for the mortars to be non-matching.
See \cref{fig:mortar-simple} for a simplified figure where there are non-matching tensor-product grids. 

We define a projection for each subdomain $Q_i: \Lambda_H \to W_{hi}$ such that for all $\lambda_H \in \Lambda_H$
\begin{align}\label{eqn:projection-h12}
 	(\lambda_H - Q_i \lambda_H , p_h)_{L^{2}(\Gamma_i)} = 0, \qquad \forall p_h \in W_{hi}|_{\Gamma_i}
\end{align}
and 
\begin{align*}
    (Q_i\lambda_H,  w)_{H^1(\Omega_i)} &= 0, \qquad \forall w \in W_{hi, 0}.
\end{align*}
The first condition simply defines the boundary of $Q_i \lambda_H$ using the $L^2$-projection\footnote{We found in our numerical examples that using the interpolant suffices, however we will carry out the analysis using the projection.} on $\Omega_i$ while the second condition means that it is the discrete harmonic extension into $\Omega_i$ given the boundary $Q_i\lambda_H$ on $\Gamma_i$ \cite{toselli2004domain}. 
We note that in general, the projections to the left and right of that interface are different since the discretization can be different on either sides as can be seen in \cref{fig:mortar-simple}.

With the above in hand, we can define the discretized bilinear operator and linear functional similar to \cref{eqn:bilinear-form-1,eqn:linear-functional-1}. For $\lambda_H, \mu_H \in \Lambda_H$, let
\begin{align}\label{eqn:disc-bilinear}
	b_h( \lambda_H,  \mu_H) := \sum_{i=1}^n (\mat u_h^*(Q_i \lambda_H), \nabla (Q_i\mu_H))_{\Omega_i}
\end{align}
and
\begin{align}
	L_h( \mu_H) :=  \sum_{i=1}^n (f, Q_i\mu_H)_{\Omega_i} - (\bar{\mat u}_h, \nabla (Q_i\mu_H))_{\Omega_i}
\end{align}
where $p_h^*(Q_i \lambda_H) \in \oplus_{i=1}^n W_{hi}, \mat u^*_h(Q_i \lambda_H) \in \oplus_{i=1}^n \mat V_{hi}$ satisfies, for $1 \le i \le n$, 
\begin{align}\label{eqn:star-h}
	(\mat u^*_h(Q_i \lambda_H), \mat v_h) - (K\nabla p_h^*(Q_i \lambda_H), \mat v_h) &= 0, &\forall \mat v_h \in \mat V_{hi} \\
	(\mat u_h^*(Q_i \lambda_H), \nabla w_h)  &=0,  &\forall w_h \in W_{hi, 0}
\end{align}
with $p^*_h(Q_i\lambda_H) = Q_i\lambda_H$ on $\Gamma_i$, 
and $\bar p_h \in \oplus_{i=1}^n W_{hi}, \bar{\mat u}_h \in \oplus_{i=1}^n V_{hi}$ satisfying
\begin{align}\label{eqn:bar-h}
	(\bar{\mat u}_h, \mat v_h) - (K\nabla \bar p_h, \mat v_h) &= 0, &\forall \mat v_h \in \mat V_{hi}  \\
	(\bar{\mat u}_h, \nabla w_h)  &= (f, w_h) , &\forall w_h \in W_{hi, 0}
\end{align}
with $\bar p_h = 0$ on $\Gamma_i$.
As before, the above problems are defined locally and can be solved in parallel.

We state the discrete variational equation as follows. Find $\lambda_H \in \Lambda_{H, g}$ such that
\begin{align}\label{eqn:disc-bilinear-form}
    b_h(\lambda_H, \mu_H) = L_h(\mu_H), \qquad \forall \mu_H \in \Lambda_{H, 0}.
\end{align}
The well-posedness of the variational form can be deduced from Lax-Milgram if the coercivity condition
\begin{align}\label{eqn:discrete-coercive}
    b_h(\lambda_H, \lambda_H) \ge \alpha \sum_{i=1}^n\norm{\lambda_H}_{H^{1/2}(\Gamma_i)}^2
\end{align}
is true.
The coercivity condition \cref{eqn:discrete-coercive} will require two assumptions which excludes pathological discretizations: 
\begin{enumerate}
    \item Assumption 1 (injectivity): for all $\lambda_H \in \Lambda_H$, there exists a constant $C$ such that
    \begin{align}\label{eqn:injective-assumption}
	\sum_{i=1}^n \norm{Q_i\lambda_H}_{H^{1/2}(\Gamma_i)} \ge C \sum_{i=1}^n \norm{\lambda_H}_{H^{1/2}(\Gamma_i)}
\end{align}
meaning we have unisolvency when projecting from the mortar space onto the local subdomains. 

    \item Assumption 2 (strengthened triangle inequality): for each shared edge $\Gamma_{ij}$ and for all $\lambda_H \in \Lambda_H$, that 
\begin{align}\label{eqn:jump-assumption}
	\frac{C_p}{\abs{\Gamma_{ij}}}\norm{Q_i \lambda_H - Q_j \lambda_H}_{L^2(\Gamma_{ij})}^2 \le \frac{1}{2} (\norm{Q_i \lambda_H}_{H^{1/2}(\Gamma_{ij})}^2+ \norm{Q_j \lambda_H}_{H^{1/2}(\Gamma_{ij})}^2)
\end{align}
where $C_p$ is the Poincare constant arising in \cite[(1.3)]{brenner2003poincare} and $\abs{\Gamma_{ij}}$ is the length of the shared edge.
The condition means two adjacent subdomains cannot have too large of a difference in their discretization parameter.
In particular, if two adjacent subdomains have the same, symmetric discretization parameters, then the left side of \cref{eqn:jump-assumption} is trivially zero. 
\end{enumerate}

In the case of data-driven elements, extra care must be paid to Assumption 1 since a training procedure might move the fine-scale nodes such that unisolvency is lost.
However, this can be circumvented by either placing restrictions on the movement of the nodes, or, as in some of our numerical examples, using a very coarse mortar space. 

With the above assumptions, we can now state the stability result: 
\begin{lemma}\label{lem:dis-stability}
    With the above two assumptions, the discretized bilinear form \cref{eqn:disc-bilinear} is coercive (e.g. \cref{eqn:discrete-coercive}) over $\Lambda_{H, 0}$.
\end{lemma}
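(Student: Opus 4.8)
The argument will mirror the continuous coercivity proof behind \cref{lem:continuous-coer}, with the non-conformity absorbed by Assumptions 1 and 2. The first and central step is the energy identity
\[
	b_h(\lambda_H,\lambda_H) = \sum_{i=1}^n \bigl(K\nabla p_h^*(Q_i\lambda_H),\, \nabla p_h^*(Q_i\lambda_H)\bigr)_{\Omega_i}.
\]
Since $Q_i\lambda_H \in W_{hi}$ and $\nabla W_{hi}\subseteq\mat V_{hi}$, one may take $\mat v_h = \nabla(Q_i\lambda_H)$ in the first equation of \cref{eqn:star-h} to replace $\mat u_h^*(Q_i\lambda_H)$ by $K\nabla p_h^*(Q_i\lambda_H)$ in $b_h$. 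Combining the two equations of \cref{eqn:star-h} with $\mat v_h = \nabla w_h$ shows $(K\nabla p_h^*(Q_i\lambda_H),\nabla w_h)_{\Omega_i}=0$ for every $w_h \in W_{hi, 0}$, i.e.\ $p_h^*(Q_i\lambda_H)$ is discretely $K$-harmonic on $\Omega_i$; and the first defining property \cref{eqn:projection-h12} of $Q_i$ forces $p_h^*(Q_i\lambda_H)$ and $Q_i\lambda_H$ to share the same trace on $\Gamma_i$, so $Q_i\lambda_H - p_h^*(Q_i\lambda_H) \in W_{hi, 0}$ and drops out. (Running the same computation with $\mu_H$ in the second slot also yields symmetry of $b_h$, the discrete counterpart of the first half of \cref{lem:continuous-coer}.)

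Because $K$ is uniformly positive definite, the identity gives $b_h(\lambda_H,\lambda_H) \ge c_K \sum_i \abs{p_h^*(Q_i\lambda_H)}^2_{H^1(\Omega_i)}$. Now $p_h^*(Q_i\lambda_H)$ is a discrete extension into $\Omega_i$ of the boundary trace $Q_i\lambda_H|_{\Gamma_i}$; comparing it with the minimal-$H^1$-seminorm discrete extension of that same trace, and invoking the standard equivalence between the seminorm of the discrete harmonic extension and the $H^{1/2}(\Gamma_i)$ seminorm of its boundary data --- with constants depending only on the subdomain aspect ratios and not on $h$ \cite{toselli2004domain} --- yields $b_h(\lambda_H,\lambda_H) \ge C \sum_i \abs{Q_i\lambda_H}^2_{H^{1/2}(\Gamma_i)}$. (The second defining property of $Q_i$ pins down $Q_i\lambda_H$ as a specific discrete harmonic extension, which is what makes the term $\nabla(Q_i\mu_H)$ appearing in $b_h$ well defined.)

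It remains to recover $\sum_i \norm{\lambda_H}^2_{H^{1/2}(\Gamma_i)}$ from these subdomain seminorms of $Q_i\lambda_H$, and here both assumptions enter. First, one upgrades the seminorm to the full norm by applying a broken Poincare--Friedrichs inequality over the skeleton $\Gamma$ in the spirit of Brenner \cite{brenner2003poincare} to the family $\{Q_i\lambda_H\}_i$, which is in general discontinuous across the $\Gamma_{ij}$: the missing $L^2(\Gamma_i)$ content is controlled by the broken $H^{1/2}$ seminorm plus the interface jump terms $\tfrac{C_p}{\abs{\Gamma_{ij}}}\norm{Q_i\lambda_H - Q_j\lambda_H}^2_{L^2(\Gamma_{ij})}$, while the Dirichlet contribution vanishes since $Q_i\lambda_H|_{\partial\Omega}=0$ for $\lambda_H \in \Lambda_{H, 0}$. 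Assumption 2 \cref{eqn:jump-assumption} is calibrated, with $C_p$ Brenner's constant and the factor $\tfrac12$, exactly so that each jump term is absorbed into half of the two adjacent edge $H^{1/2}$-norms, leaving $\sum_i\abs{Q_i\lambda_H}^2_{H^{1/2}(\Gamma_i)} \ge c\sum_i\norm{Q_i\lambda_H}^2_{H^{1/2}(\Gamma_i)}$. Finally Assumption 1 \cref{eqn:injective-assumption}, the uniform unisolvency of the projections $Q_i$, converts this into $\ge c'\sum_i\norm{\lambda_H}^2_{H^{1/2}(\Gamma_i)}$, which is precisely \cref{eqn:discrete-coercive}.

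I expect the third step to be the principal obstacle: one must verify that the broken Poincare inequality holds over $\Gamma$ with a constant independent of $n$ and of the (possibly non-matching, data-driven) meshes; that summing each edge $H^{1/2}$-norm over the edges of a subdomain does not exceed the whole-boundary $H^{1/2}(\Gamma_i)$-norm, so that the $\tfrac12$-absorption actually closes; and that the constant-function ambiguity in passing between $H^{1/2}$ seminorms and norms is annihilated by the homogeneous boundary data. The energy identity and the harmonic-extension equivalence, by contrast, are routine once the two defining properties of $Q_i$ and the inclusion $\nabla W_{hi}\subseteq\mat V_{hi}$ are in hand.
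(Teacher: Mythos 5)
Your overall architecture matches the paper's: the energy identity $b_h(\lambda_H,\lambda_H)=\sum_i(K\nabla p_h^*(Q_i\lambda_H),\nabla p_h^*(Q_i\lambda_H))_{\Omega_i}$ is established exactly as you describe (the paper's \cref{eqn:b-useful-identity-disc}, using $Q_i\mu_H-p_h^*(Q_i\mu_H)\in W_{hi,0}$ and the inclusion $\nabla W_{hi}\subseteq \mat V_{hi}$), and the endgame --- absorb the interface jumps via Assumption 2, then pass from $Q_i\lambda_H$ back to $\lambda_H$ via Assumption 1 --- is also identical. Where you diverge is the middle, and you have correctly located the soft spot yourself.

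The paper does \emph{not} prove a broken Poincar\'e--Friedrichs inequality on the skeleton $\Gamma$ for the piecewise-$H^{1/2}$ family $\{Q_i\lambda_H\}_i$, nor does it pass through the seminorm equivalence $\abs{p_h^*(Q_i\lambda_H)}_{H^1(\Omega_i)}\simeq\abs{Q_i\lambda_H}_{H^{1/2}(\Gamma_i)}$ followed by a seminorm-to-norm upgrade. Instead it (i) introduces the full norm $\norm{Q_i\lambda_H}^2_{H^{1/2}(\Gamma_i)}$ directly by adding the vanishing boundary pairing $\delta\ipbc{Q_i\lambda_H-p_h^*(Q_i\lambda_H)}{Q_i\lambda_H}_{H^{1/2}(\Gamma_i)}$ (zero because the two functions share a trace) and expanding with Cauchy--Schwarz and the trace inequality, which leaves a deficit term $-\tfrac{\delta}{2}\norm{p_h^*(Q\lambda_H)}^2_{H^1(\Omega)}$; and (ii) controls that deficit by applying Brenner's Corollary 6.3 to the broken $H^1$ \emph{volume} function $p_h^*(Q\lambda_H)$ on $\Omega$ (the paper's \cref{lem:helper}), whose jump functionals on the $\Gamma_{ij}$ are precisely the quantities appearing in Assumption 2, with $\delta=\tfrac{2}{C_p+1}$ chosen so the bookkeeping closes. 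This is the content your proposal would have to replace: the fractional Poincar\'e inequality on $\Gamma$ that you invoke is plausible and fits Brenner's abstract framework, but it is not the result the paper cites, and establishing it (with control over the constant for non-matching, data-driven traces) is a separate piece of work. Likewise your harmonic-extension equivalence is standard for quasi-uniform FE spaces but is an additional hypothesis for the trainable Whitney-form spaces. So: same skeleton, genuinely different middle; the paper's route is preferable here because every inequality it needs is either an algebraic identity, a trace inequality, or literally Brenner's stated corollary, whereas yours defers two nontrivial discrete trace-space facts to the reader. One further small point: the jump terms enter the two arguments with opposite roles --- in the paper they are \emph{added} to the gradient energy so that \cref{lem:helper} can dominate the deficit, and Assumption 2 then pays for them out of the surplus $\tfrac{\delta}{4}$-terms; in your version they appear as the price of the seminorm-to-norm upgrade and must then be absorbed back. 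Both bookkeepings can be made to close, but the calibration of $C_p$ and the factor $\tfrac12$ in \cref{eqn:jump-assumption} is tuned to the paper's arrangement, so you would need to re-derive the admissible constants rather than reuse them.
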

The proof of the above lemma is technical and is delayed to the appendix.

\cref{lem:dis-stability} means that one is allowed to apply Strang's second lemma to obtain error estimates. 
We assume that an a priori estimate exists: let $\delta$ be a constant such that the discrete approximations on each subdomain $1 \le i \le n$ satisfy
	\begin{align}\label{eqn:approximiation-assumption}
            \norm{p^*( \lambda) - p_h^*(Q_i \lambda)}_{\Omega_i} &\le \delta, &\qquad \norm{\mat u^*( \lambda) - \mat u_h^*(Q_i \lambda)}_{\Omega_i} &\le \delta \\
            \norm{\bar p - \bar p_h}_{\Omega_i} &\le \delta      &\qquad \norm{\bar{\mat u} - \bar{\mat u}_h}_{\Omega_i} &\le \delta 
  	\end{align}
for all $\lambda \in \Lambda$.
The constant $\delta$ corresponds to the ability of the local solvers to solve for $p^*, \mat u^*$ accurately for an arbitrary mortar.
In the case where standard FEM is used on the subdomain, then $\delta$ can be replaced with the respective a priori estimate whereas for the DDEC methods, this corresponds to an optimization threshold.

We can now state a simple convergence guarantee {\bf R2} on the mortar space, whose proof is delayed until the appendix:
\begin{theorem}\label{thm:err-est}
    Suppose the solution to the \cref{eqn:primal-big} is such that $p \in H^{2}(\Omega)$ with homogeneous Dirichlet boundary condition. Then there exists a constant $C$ independent of $\lambda^*$ such that
	\begin{align}\label{eqn:is-this-correct}
		\sum_{i=1}^n\norm{ \lambda^* -  \lambda^*_H}_{H^{1/2}(\Gamma_i)} &\le C n \abs{p}_{H^2(\Omega)} (H + h + \delta)
	\end{align}
    where $\lambda^*$ is the true solution to \cref{eqn:cont-bilinear-form}, and $\lambda_H^*$ is the solution to \cref{eqn:disc-bilinear-form}, and $H, h$ are the maximal mesh sizes on $\Lambda_H$ and the boundary of the subdomains $\Omega_i$, respectively.
\end{theorem}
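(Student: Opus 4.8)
The plan is to apply Strang's second lemma to the variational problem \cref{eqn:disc-bilinear-form}, using the coercivity of $b_h$ established in \cref{lem:dis-stability} as the stability ingredient and then controlling the consistency error and best-approximation error separately. First I would write the Strang estimate in the form
\begin{align*}
    \sum_{i=1}^n \norm{\lambda^* - \lambda^*_H}_{H^{1/2}(\Gamma_i)} \le C\left( \inf_{\mu_H \in \Lambda_{H,g}} \sum_{i=1}^n \norm{\lambda^* - \mu_H}_{H^{1/2}(\Gamma_i)} + \sup_{\mu_H \in \Lambda_{H,0}} \frac{\abs{b(\lambda^*,\mu_H) - b_h(\lambda^*,\mu_H)} + \abs{L(\mu_H) - L_h(\mu_H)}}{\sum_{i=1}^n \norm{\mu_H}_{H^{1/2}(\Gamma_i)}}\right),
\end{align*}
where the coercivity constant $\alpha$ from \cref{eqn:discrete-coercive} enters $C$. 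Strictly the lemma compares two different bilinear forms on the discrete space, so I would be careful to treat $b_h$ as the ``computable'' form and $b$ (restricted appropriately, using that $Q_i$ applied to a mortar interpolant of $\lambda^*$ reproduces boundary data) as the reference.

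Next I would estimate the three sources of error. The best-approximation term is handled by standard piecewise-linear interpolation theory on $\Lambda_H$: since $p \in H^2(\Omega)$, the trace $\lambda^* = p|_\Gamma$ lies in $H^{3/2}(\Gamma_i)$ on each edge, and interpolation gives $\norm{\lambda^* - I_H\lambda^*}_{H^{1/2}(\Gamma_i)} \lesssim H \abs{p}_{H^2(\Omega_i)}$ per subdomain, contributing the $H$ term after summing over the (at most $Cn$) edges. The consistency error in $b$ versus $b_h$ comes from two places: replacing $\mat u^*(\cdot)$ by its discrete counterpart $\mat u^*_h(\cdot)$, which is controlled by the a priori/optimization bound \cref{eqn:approximiation-assumption} and yields the $\delta$ term, and replacing the continuous harmonic extension implicit in the continuous formulation by the discrete projection $Q_i$, which introduces the $h$ term via the standard discrete-harmonic-extension error estimate on $W_{hi}$ together with the $L^2$-projection error on $\Gamma_i$; the same decomposition applies to $L - L_h$. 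Summing the per-subdomain contributions (each involving $\abs{p}_{H^2(\Omega_i)}$, with $\sum_i \abs{p}_{H^2(\Omega_i)}^2 = \abs{p}_{H^2(\Omega)}^2$ and Cauchy--Schwarz over the $n$ subdomains producing the factor $n$) gives the stated bound $Cn\abs{p}_{H^2(\Omega)}(H+h+\delta)$.

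The main obstacle I expect is the consistency estimate for $b(\lambda^*,\mu_H) - b_h(\lambda^*,\mu_H)$, specifically handling the mismatch between the continuous formulation on $H^\gamma$ (where the extension is the true $K$-harmonic extension) and the discrete formulation which uses the purely geometric discrete-harmonic projection $Q_i$ followed by the discrete solve \cref{eqn:star-h}. One has to show that testing the local flux $\mat u^*(\lambda^*)$ against $\nabla(Q_i\mu_H)$ rather than against $\nabla\mu_H$ (and likewise on the discrete side) only costs $\mathcal{O}(h)$, which requires a duality/Aubin--Nitsche-type argument on each subdomain exploiting the $H^2$ regularity and the inf-sup pairing $\nabla W_{hi}\subseteq \mat V_{hi}$, plus the orthogonality built into the definition of $Q_i$. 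A secondary technical point is that the non-conforming, non-matching interfaces mean the $L^2(\Gamma_i)$-projection in \cref{eqn:projection-h12} must be shown $H^{1/2}$-stable uniformly in the mesh (this is where Assumption 1 and the quasi-uniformity/aspect-ratio hypotheses are used), so that the interpolation and projection errors can be combined cleanly; I would invoke the standard trace and projection-stability results cited in the paper rather than reprove them.
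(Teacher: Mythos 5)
Your proposal follows the same skeleton as the paper's proof: Strang's second lemma with the coercivity of \cref{lem:dis-stability}, an $\mathcal{O}(H)$ best-approximation bound from the $H^{3/2}(\Gamma_i)$ trace regularity of $p\in H^2(\Omega)$, an $\mathcal{O}(\delta)$ contribution from \cref{eqn:approximiation-assumption}, and an $\mathcal{O}(h)$ contribution from the projections $Q_i$. The paper works directly with the consistency residual $b_h(\lambda^*,\mu_H)-L_h(\mu_H)$ rather than with the differences $b-b_h$ and $L-L_h$, but these are equivalent since $b(\lambda^*,\mu_H)=L(\mu_H)$ for $\mu_H\in\Lambda_{H,0}$.

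The one step where your sketch would stall as written is the $h$ term. Comparing $\nabla\mu_H$ with $\nabla(Q_i\mu_H)$ at the volume level, or invoking a discrete-harmonic-extension error estimate in $H^1(\Omega_i)$, does not yield $\mathcal{O}(h)$ --- such estimates generically lose half a power of $h$ because the projected data only lives in $H^{1/2}(\Gamma_i)$ --- and a subdomain Aubin--Nitsche solve is not the mechanism the paper uses. Instead the paper substitutes $-\div (K\nabla p)=f$ into the residual and integrates by parts on each $\Omega_i$, so that everything not already absorbed into the $\delta$ term collapses onto interface integrals $\int_{\Gamma_i}(K\nabla p\cdot\mat n_i)\,Q_i\mu_H\,ds$. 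Because the true normal flux is single-valued across $\Gamma_{ij}$ and $\mat n_i=-\mat n_j$, these survive only as jump terms $\int_{\Gamma_{ij}}(K\nabla p\cdot\mat n_i)(Q_i\mu_H-Q_j\mu_H)\,ds$, which are then paired in $H^{1/2}(\Gamma_{ij})\times H^{-1/2}(\Gamma_{ij})$ duality; the $\mathcal{O}(h)$ comes from the negative-norm approximation property of the $L^2$ projection, $\norm{(I-Q_i)\mu_H}_{H^{-1/2}(\Gamma_{ij})}\le C h_i\norm{\mu_H}_{H^{1/2}}$, together with the trace bound $\norm{K\nabla p\cdot\mat n_i}_{H^{1/2}(\Gamma_{ij})}\le C\abs{p}_{H^2(\Omega)}$. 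Your instinct that a duality pairing and the $H^2$ regularity are needed is correct, but it is this one-dimensional negative-norm projection estimate on the interfaces --- not a volume duality argument --- that closes the estimate; the rest of your outline matches the paper.
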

\begin{remark}
    As mentioned, the constant $\delta$ associated with \cref{eqn:approximiation-assumption} corresponds to the accuracy of the local solvers while the $h$ term relates to the accuracy of projecting the mortar to the local subdomains using \cref{eqn:projection-h12}, though in general we can assume that $h < H$. 
    We also note that \cref{eqn:is-this-correct} implies that a combination of refinement of both the local solvers and the mortar space is needed to obtain convergence. 
\end{remark}
Finally, we can easily bound the error on the pressure and velocity explicitly.
\begin{lemma}\label{lem:error-pre-vel}
	With the same assumptions and constants as in \cref{thm:err-est}, there exists a constant $C$ independent of $\mat u$ and $p$ such that
 	\begin{align*}
            \norm{p - p_h}_{\Omega} + \norm{\mat u - \mat u_h}_{\Omega} &\le  C n \abs{p}_{H^2(\Omega)} (H + h + \delta)
	\end{align*}
 where $\mat u, p$ are the true solutions arising from \cref{eqn:original-subdomain} and $\mat u_h = \sum_{i=1}^n \mat u_h^*(Q_i \lambda_H^*) + \bar{\mat u}_h$, $p_h = \sum_{i=1}^n p_h^*(Q_i \lambda_H^*) + \bar{p}_h$.
\end{lemma}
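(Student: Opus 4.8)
The plan is to localize the global $L^2$ error onto the subdomains and propagate it through the local solution operators, so that everything reduces to quantities already controlled by \cref{thm:err-est} and the a priori estimate \cref{eqn:approximiation-assumption}. Since the $\Omega_i$ form a disjoint partition of $\Omega$, one has $\norm{p-p_h}_\Omega^2 = \sum_{i=1}^n \norm{p-p_h}_{\Omega_i}^2$ and likewise for $\mat u$, so by $\norm{\cdot}_{\ell^2}\le\norm{\cdot}_{\ell^1}$ it suffices to bound $\sum_i \norm{p-p_h}_{\Omega_i} + \sum_i \norm{\mat u - \mat u_h}_{\Omega_i}$. On each $\Omega_i$, \cref{lem:cont-sol} gives $p|_{\Omega_i} = p^*(\lambda^*)|_{\Omega_i} + \bar p|_{\Omega_i}$, while by hypothesis $p_h|_{\Omega_i} = p_h^*(Q_i\lambda_H^*)|_{\Omega_i} + \bar p_h|_{\Omega_i}$; inserting $\pm p^*(\lambda_H^*)$ and $\pm \bar p$ splits the subdomain error into a \emph{mortar-propagation} term $\norm{p^*(\lambda^*) - p^*(\lambda_H^*)}_{\Omega_i}$, a \emph{local-solver} term $\norm{p^*(\lambda_H^*) - p_h^*(Q_i\lambda_H^*)}_{\Omega_i}$, and $\norm{\bar p - \bar p_h}_{\Omega_i}$. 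The velocity error decomposes in exactly the same way.

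First I would dispatch the two local-solver contributions. Since $\lambda_H^* \in \Lambda_H \subset \Lambda$, the estimate \cref{eqn:approximiation-assumption} applies with the choice $\lambda = \lambda_H^*$ and yields $\norm{p^*(\lambda_H^*) - p_h^*(Q_i\lambda_H^*)}_{\Omega_i} \le \delta$ and $\norm{\bar p - \bar p_h}_{\Omega_i} \le \delta$, together with the analogous bounds for $\mat u^*$ and $\bar{\mat u}$; summing over the $n$ subdomains these contribute $\mathcal{O}(n\delta)$ to each of $\sum_i\norm{p-p_h}_{\Omega_i}$ and $\sum_i\norm{\mat u - \mat u_h}_{\Omega_i}$.

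Next I would handle the mortar-propagation term. The local problem \cref{eqn:star} is linear in its boundary data, so $p^*(\lambda^*) - p^*(\lambda_H^*) = p^*(\lambda^* - \lambda_H^*)$ with $\lambda^* - \lambda_H^* \in \Lambda_0$ (recall we are under the hypotheses of \cref{thm:err-est}, so $g = 0$), and likewise for $\mat u^*$. The key ingredient is stability of the continuous local solve: standard elliptic theory for the mixed problem \cref{eqn:star} — equivalently, the harmonic-extension interpretation of $p^*$ — gives $\norm{p^*(\mu)}_{H^1(\Omega_i)} + \norm{\mat u^*(\mu)}_{L^2(\Omega_i)^2} \le C\norm{\mu}_{H^{1/2}(\Gamma_i)}$ with $C$ independent of $i$, since the subdomains have comparable aspect ratios and a scaling argument reduces to a reference configuration. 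Hence, using $\norm{\cdot}_{L^2(\Omega_i)}\le\norm{\cdot}_{H^1(\Omega_i)}$, one gets $\sum_i\bigl(\norm{p^*(\lambda^*) - p^*(\lambda_H^*)}_{\Omega_i} + \norm{\mat u^*(\lambda^*) - \mat u^*(\lambda_H^*)}_{\Omega_i}\bigr) \le C\sum_i \norm{\lambda^* - \lambda_H^*}_{H^{1/2}(\Gamma_i)}$, which is precisely the quantity bounded in \cref{thm:err-est} by $C n\abs{p}_{H^2(\Omega)}(H+h+\delta)$. Collecting this with the $\mathcal{O}(n\delta)$ local-solver contributions and absorbing the stray $\delta$-terms into the grouping $H+h+\delta$ — consistent with the constant bookkeeping already used in \cref{thm:err-est} — yields the claimed bound.

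The step I expect to be the main obstacle is the uniformity (in both $i$ and $n$) of the stability constant $C$ for the continuous local solution operator $\mu \mapsto (\mat u^*(\mu), p^*(\mu))$ from $H^{1/2}(\Gamma_i)$ into $L^2(\Omega_i)^2 \times H^1(\Omega_i)$: this is where the ``similar aspect ratios'' hypothesis on the subdomains genuinely enters, and it must be made precise through a Poincar\'e/scaling argument on a reference subdomain. Once that is in place, the remaining estimates are routine, the only care being to ensure that the accumulated $\delta$-terms are compatible with the $H+h+\delta$ grouping rather than producing an uncontrolled extra factor.
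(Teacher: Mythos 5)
Your proposal is correct and follows essentially the same route as the paper: decompose the error on each subdomain via $\pm\mat u^*(\lambda_H^*)$ (and analogously for $p$), absorb the local-solver and $\bar{\,\cdot\,}$-terms into $n\delta$ via \cref{eqn:approximiation-assumption}, use linearity and the $H^{1/2}(\Gamma_i)\to L^2(\Omega_i)$ stability of the continuous local solve, and invoke \cref{thm:err-est}. Your explicit attention to the uniformity of the stability constant is a point the paper dispatches with the phrase ``standard regularity estimates,'' but it does not change the argument.
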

\begin{proof}
    By \cref{lem:cont-sol}, we have 
\begin{align*}
	\norm{\mat u - \mat u_h}_{\Omega} &\le \sum_{i=1}^n \norm{\mat u^*( \lambda^*) - \mat u_h^*( Q_i\lambda_H^*)}_{\Omega_i} + \norm{\bar{\mat u} - \bar{\mat u}_h}_{\Omega_i}. 
\end{align*}
The latter term on the right hand side is bounded by $\delta$ by assumption. 
Thus the result follows by
\begin{align*}
	 \sum_{i=1}^n \norm{\mat u^*( \lambda^*) - \mat u_h^*( Q_i\lambda_H^*)}_{\Omega_i} &\le \sum_{i=1}^n \norm{\mat u^*( \lambda^*) - \mat u^*( \lambda_H^*)}_{\Omega_i} +  \norm{\mat u^*( \lambda_H^*) - \mat u_h^*(Q_i \lambda_H^*)}_{\Omega_i} \\
	 &\le n\delta +  \sum_{i=1}^n \norm{\mat u^*( \lambda^* -  \lambda^*_H)}_{\Omega_i} \\
	 &\le n\delta +  \sum_{i=1}^n \norm{ \lambda^* -  \lambda^*_H}_{H^{1/2}(\Gamma_i)}
\end{align*}
where we used standard regularity estimates at the last step. 
The same estimates also follow for the pressure and the result follows from applying \cref{thm:err-est}.   
\end{proof}

The above error analysis partially shows that requirement {\bf R2} from the introduction is met, as the total error is indeed controlled by a combination of the local optimization error, and coupling error from the mortars.
However, due to the use of crude bounds on the sum, it is not independent with the number of subdomains, though we will later observe it holds numerically (cf. \cref{sec:subdomain-refinement-examples}). 

\subsection{Data-Driven Elements with Mortar Method}
Classical finite elements such as Nedelec elements can be used for the local solvers in \cref{eqn:star-h,eqn:bar-h} on the subdomains $\Omega_i$ in a straightforward manner (see \cref{sec:example-1} for an example).
However, the true strength of the above mortar method is its ability to interface with the data-driven structure-preserving models discussed in \cref{sec:whitneysec}.
We briefly discuss combining the usage of the Whitney form elements with the mortar method. 

As before, we assume the data is of the form $\{(\mat u(x_k), p(x_k)), g_k\}_{k=0}^N$ with $x_k$ sampled randomly on $\Omega$.
This can either be supplied via physical data or high-fidelity PDE solvers.
Let $M$ be the total number of unique boundary conditions $g_k$ (e.g. $M
=1$ if all data points originate from the same boundary value problem). 
We assume $\Omega$ is divided into subdomains $\Omega_i$. 
As with most data-driven applications, a large number of data points $N$ is needed, however, only one boundary condition $M$ is needed (see \cref{sec:battery-example} for an example with $M = 1$), though more is always better.

The iterative solving process for the mortar \cref{eqn:disc-bilinear-form} involves different Dirichlet boundary conditions $\lambda_H$ being passed into \cref{eqn:star-h}, meaning that the ability for the data-driven Whitney form solvers to be able to correctly respond to different Dirichlet data is important. 
Ideally $M$ is large so that a good sampling of Dirichlet conditions around each $\Omega_i$ is achieved.

In cases where simulations on each $\Omega_i$ is possible, one should perform simulations to obtain responses to a possible mortar boundary conditions. 
In particular, in our numerical examples, we choose to use either nodal functions $\{(1-x)(1-y), x(1-y), (1-x)y, xy\}$ or edge Bernstein polynomials.
The Bernstein polynomials are chosen as they provide a complete basis on $\partial \Omega_i$ and their gradients are very smooth, however other boundary conditions can be chosen. 
We note that these data is usually cheaper to generate since the subdomains are smaller than $\Omega$, and they can be performed in parallel.

However, the ability to perform these simulations on each subdomain is not always possible. 
In this case, a simple approach consisting of taking $g_k := p|_{\Omega_i}$ and the corresponding data points $(\mat u(x_k), p(x_k))$ restricted to each $\Omega_i$ can be done.
While easier, this does lead to higher errors due to undersampling from certain mortar modes. 
Nevertheless, the structure-preserving nature of the data-driven elements ensures adherence to the underlying invariance.

With the data on each $\Omega_i$ chosen, we then solve the minimization problem \cref{eqn:h1loss} giving us fine-scale nodes, and a coarsening to POUs. 
\emph{These data-driven elements are then used as the local solvers for \cref{eqn:star-h,eqn:bar-h}.}
Some care must be exercised to ensure that Assumption 1 is satisfied; the projection from the mortar space onto the local solvers must be unique.
One can mix and match the local solvers, and only use the data-driven elements where the fluxes are unknown and use traditional finite elements elsewhere; see \cref{sec:cylinder} for an example.
Specific details regarding the training process for the numerical examples are given in \cref{sec:appendix}.

\subsection{Neumann Boundary Conditions and Conservation}
We briefly discuss modifications needed to solve the pure Neumann problem $\mat u \cdot \vec n = g$ on $\partial \Omega$, and show that the critical conservation and compatibility property of
\begin{align}
    \label{eqn:conservation}
    \int_{\Omega} f + \int_{\partial \Omega} g = 0
\end{align}
is satisfied by the discrete mortar method. 
Such conservation is exhibited in the FEEC elements also \cite{actoradata}, and thus by showing the mortar method exhibits this behavior as well, requirement {\bf R1} is satisfied. 

The assumed global model is now to find $(\mat u, p) \in (L^2(\Omega)^2, H^1(\Omega))$ satisfying
\begin{equation}
\begin{aligned}\label{eqn:primal-big-neumann}
    (\mat u, \mat v) - (K\nabla p, \mat v) &= 0, &\forall \mat v \in L^2(\Omega)^2 \\
    (\mat u, \nabla w)  &= (f, w) + (g, w)_{\partial \Omega}, &\forall w \in H^1(\Omega)
\end{aligned}
\end{equation}
with the condition that $(p, 1) = 0$ for uniqueness. 

Due to the differences in boundary conditions, a slightly different choice of spaces and decomposition akin to \cref{eqn:decomposition} is needed. 
Define 
\begin{align}
    H^\gamma_B(\Omega) &:= \{u \in H^\gamma \mid u|_{\Gamma_i \setminus \partial \Omega} = 0, \forall 1 \le  i \le n\}
\end{align}
and let $H^\gamma_D$ be such that 
\begin{align}
    H^\gamma = H^\gamma_{D} \oplus H^\gamma_B. 
\end{align}
The space $H^\gamma_B$ is simply the subspace which vanishes on the interior mortar spaces, while $H^\gamma_D$ is its complement. 
Finally, for each $1 \le i \le n$, let 
\begin{align}
    H_D^1(\Omega_i) := \{ u \in H^1(\Omega_i) \mid u|_{\Gamma_i \setminus \partial \Omega}  = 0 \},
\end{align}
the set of $H^1$ functions vanishing only on the interior boundary. 
Note that all functions in $H^\gamma_B$ can be written as a sum of functions in $H_D^1(\Omega_i)$.
Hence, a new decomposition can be written 
\begin{align}
    H^1(\Omega) = H_D^1(\Omega_1) \oplus \cdots \oplus H_D^1(\Omega_n) \oplus H_D^\gamma(\Omega). 
\end{align} 

With the spaces above, we can introduce a mortar that is equivalent to \cref{eqn:primal-big-neumann}, up to a constant: for $1 \le i \le n$, let $(\mat u_i, p_i, \lambda) \in (L^2(\Omega_i)^2, H^1(\Omega_i), H^\gamma_D(\Omega))$ satisfy
\begin{equation}\label{eqn:neumann}
\begin{aligned}
    	(\mat u_i, \mat v_i) - (K\nabla p_i, \mat v_i) &= 0, &\forall \mat v_i \in L^2(\Omega_i)^2 \\
	(\mat u_i, \nabla w_i)  &= (f, w_i)+ (g, w_i)_{\Gamma_i \cap \partial \Omega}, &\forall w_i \in H^1_D(\Omega_i)
\end{aligned}
\end{equation}
with the boundary condition that $p|_{\Gamma_i \setminus \partial \Omega} = \lambda|_{\Gamma_i \setminus \partial \Omega}$, and 
\begin{align}\label{eqn:basic-h10-neumann}
	\sum_{i=1}^n (\mat u_i, \nabla w)_{\Omega_i}  &= (f, w)+ (g, w_i)_{\partial \Omega}, &\forall w \in H_D^\gamma(\Omega).
\end{align}
Finally, we can impose $\int_\Gamma \lambda = 0$ for uniqueness. 
The proof is similar to that of \cref{lem:cont-subdomains} and is omitted. 

With the above, it's easy to define the variational problem as before.
Small changes are needed in the bilinear form \cref{eqn:bilinear-form-1} and linear functional \cref{eqn:linear-functional-1}: the definition of $(\mat u^*(\lambda), p^*(\lambda)), (\bar{\mat u}, \bar p)$ should be changed to
\begin{equation}
\begin{aligned}\label{eqn:star-neumann}
	(\mat u^*( \lambda), \mat v) - (K\nabla p^*( \lambda), \mat v) &= 0, &\forall \mat v \in L^2(\Omega_i)^2 \\
	(\mat u^*( \lambda), \nabla w)  &= 0,  &\forall w \in H^1_{D}(\Omega_i)
\end{aligned}
\end{equation}
with boundary conditions $p^*(\lambda)|_{\Gamma_i \setminus \partial \Omega} = \lambda|_{\Gamma_i \setminus \partial \Omega}$, and  
\begin{equation}
\begin{aligned}\label{eqn:bar-neumann}
	(\bar{\mat u}, \mat v) - (K\nabla \bar p, \mat v) &= 0, &\forall \mat v \in L^2(\Omega_i)^2 \\
	(\bar{\mat u}, \nabla w)  &= (f, w) + (g,w)_{\Gamma_i \cap \partial \Omega}, &\forall w \in H^1_D(\Omega_i)
\end{aligned}
\end{equation}
with boundary condition $\bar p|_{\Gamma_i \setminus \partial \Omega} = 0$.
We note that \cref{eqn:star-neumann} and \cref{eqn:bar-neumann} are both well-defined for all subdomains due to the Dirichlet boundary conditions on the mortar space, except for the degenerate case where there is only one subdomain.
Finally, the variational form is similar, where we seek $\lambda \in H^\gamma_D$
\begin{align*}
   \sum_{i=1}^n (\mat u^*(\lambda), \nabla \mu)_{\Omega_i} =(f, \mu)_\Omega + (g, \mu)_{\partial \Omega} - \sum_{i=1}^n (\bar{\mat u}, \nabla \mu)_{\Omega_i}, \qquad \forall \mu \in H^\gamma_D.
\end{align*}

Turning to the discrete case, let $W_{hi}, \mat V_{hi}$ be as before and let $W_{hi, D}$ be the discretization of $H^1_D(\Omega_i)$.
Let $\Lambda_{H, D} \subset \Lambda_H$ be the discretized mortar space consisting of continuous, piecewise linear functions that vanish where $H^\gamma_D$ is zero.
The projection $Q_i$ should be changed to $Q_i : H^\gamma_D \to W_{hi}$ with the same alteration to \cref{eqn:projection-h12}. 

Thus, the discetized bilinear form and linear functional is similar to before, with the exception that $p_h^*(Q_i \lambda_H) \in \oplus_{i=1}^n W_{hi}, \mat u^*_h(Q_i \lambda_H) \in \oplus_{i=1}^n \mat V_{hi}$ satisfies, for $1 \le i \le n$, 
\begin{align}
	(\mat u^*_h(Q_i \lambda_H), \mat v_h) - (K\nabla p_h^*(Q_i \lambda_H), \mat v_h) &= 0, &\forall \mat v_h \in \mat V_{hi} \\
	(\mat u_h^*(Q_i \lambda_H), \nabla w_h)  &=0,  &\forall w_h \in W_{hi, D}\label{eqn:star-h-neumann}
\end{align}
with $p^*_h(Q_i\lambda_H) = Q_i\lambda_H$ on $\Gamma_i \setminus \partial \Omega$, 
and $\bar p_h \in \oplus_{i=1}^n W_{hi}, \bar{\mat u}_h \in \oplus_{i=1}^n V_{hi}$ satisfying
\begin{align}
	(\bar{\mat u}_h, \mat v_h) - (K\nabla \bar p_h, \mat v_h) &= 0, &\forall \mat v_h \in \mat V_{hi}  \\
	(\bar{\mat u}_h, \nabla w_h)  &= (f, w_h) + (g, w_h)_{\partial \Omega}, &\forall w_h \in W_{hi, D}\label{eqn:bar-h-neumann}
\end{align}
with $\bar p_h = 0$ on $\Gamma_i \setminus \partial \Omega_i$.
The variational form (written explicitly) is to find $\lambda_H \in \Lambda_{H, D}$, with mean zero, such that
\begin{equation}\label{eqn:disc-bilinear-form-neumann}
    \begin{aligned}
   \sum_{i=1}^n (\mat u_h^*(Q_i \lambda_H), \nabla (Q_i\mu_H))_{\Omega_i} &= \sum_{i=1}^n (f, Q_i\mu_H)_{\Omega_i} + (g, Q_i \mu_H)_{\Omega_i \cap \partial \Omega}\\
   &\qquad - (\bar{\mat u}_h, \nabla (Q_i\mu_H))_{\Omega_i}, \qquad \forall \mu_H \in \Lambda_{H, D}.
\end{aligned}
\end{equation}

Turning to \cref{eqn:conservation} and {\bf R1}, it's easy to see that for a function $\mu_H \in \Lambda_{H, D}$, the projection is exact $Q_i \mu_H|_{\Gamma_i \setminus \partial \Omega} = \mu_H|_{\Gamma_i \setminus \partial \Omega}$ if $\mu_{H}$ is constant on the interior edges (e.g. $\mu_H = C$ on $\Gamma_i \setminus \partial \Omega$).
Without loss of generality, let $\Lambda_{H, D} \ni \mu_H = 1$ on all $\Gamma_i \setminus \partial \Omega$, then we can choose $w_h$ such that $w_h + Q_i \mu_H = 1$ on $W_{hi, D}$ for each $1 \le i \le n$. 
Thus, adding \cref{eqn:star-h-neumann} and \cref{eqn:bar-h-neumann} for $1 \le i \le n$ to \cref{eqn:disc-bilinear-form-neumann} and rearranging, we obtain
\begin{align*}
    \sum_{i=1}^n (\mat u_h^*(Q_i \lambda_H) + \bar{\mat u}_h, \nabla (w_h + Q_i\mu_H))_{\Omega_i} &= \sum_{i=1}^n (\mat u_h^*(Q_i \lambda_H) + \bar{\mat u}_h, \nabla 1)_{\Omega_i} \\
    &= \sum_{i=1}^n (f, w_h + Q_i \mu_H)_{\Omega_i} + (g, w_h + Q_i \mu_H)_{\partial \Omega} \\
    &= \sum_{i=1}^n (f,1)_{\Omega_i} + (g, 1)_{\partial \Omega} = 0,
\end{align*}
meaning \cref{eqn:conservation} is valid even in the discrete case with non-matching mortars. 

\section{Numerical Results}\label{sec:numerical-results}
In this section, we present numerical results obtained from applying the above mortar method to several representative examples. 
All except the first example will involve using pre-trained FEEC elements as local subdomain solvers as discussed in \cref{sec:whitneysec}. 

\subsection{Example 1: Pure Finite Elements}\label{sec:example-1}
We start by validating the accuracy and well-posedness of the mortar method in the classical setting by using only finite element solvers on each subdomain.
In particular, no model training is used for this particular example and we only seek to show that the above mortar method converges in the forward problem.  
We examine the problem \cref{eqn:primal-big} with true solution $p(x, y) = xy + y^2$ on the domain $\Omega = [0, 2]^2$ with
\begin{align}
    K = \left(
\begin{array}{cc}
 (x+1)^2 & 0.5 \\
 0.5 & y^2+1 \\
\end{array}
\right).
\end{align}
The domain is subdivided into four equal squares.
Our initial mesh is depicted in \cref{fig:mesh-example1} with only one degree of freedom on the mortar (with the remaining four fixed due to the homogeneous Dirichlet boundary condition).
For refinement, we divide each subdomain diameter and the mortar diameter by half; see the right hand side of \cref{fig:mesh-example1} for a figure of the first refinement. 

On each subdomain, we will use the standard $\mathbb{Q}_1$ space for pressure with Nedelec elements of the lowest order for the velocity. 
The quantities in \cref{lem:error-pre-vel} can be replaced with results from standard FEM a priori estimates \cite{roberts1991mixed}. 
As a result, we obtain a convergence result of 
\begin{align}\label{eqn:fem-convergence}
    \norm{\mat u - \mat u_h}_{\Omega} + \norm{p - p_h}_{\Omega} \le CH
\end{align}
where $H$ is the size of the mortar. 
The $\mathcal{O}(H)$ convergence in the velocity is clearly illustrated in \cref{tab:convergence-fem} while we obtain $\mathcal{O}(H^2)$ superconvergence in the pressure, which was observed in smooth solutions using mortar methods \cite{arbogast2007multiscale,arbogast2000mixed}. 
Furthermore, since the estimate in \cref{lem:error-pre-vel} is in the $H^{1/2}$ norm, we expect convergence of $\mathcal{O}(H^{3/2})$ as we are measuring the $L^2$ norm but we also observe a level of superconvergence.

\begin{figure}[tb]
    \centering
    \begin{tikzpicture}[scale=1]
\draw[thick, step=1cm] (0, 0) grid (2, 2);
\draw[thick, step=.666cm, xshift=2.2cm] (0, 0) grid (2, 2);
\draw[thick, step=.666cm, yshift=-2.2cm] (0, 0) grid (2, 2);
\draw[thick, step=1cm,  xshift=2.2cm, yshift=-2.2cm] (0, 0) grid (2, 2);

\coordinate (A0) at (0, -.1);
\coordinate (A1) at (2.1, -.1);
\coordinate (A2) at (4.2, -.1);
\coordinate (A3) at (2.1, 2);
\coordinate (A4) at (2.1, -2.2);
\draw[thick] (A0) -- (A1) -- (A2);
\draw[thick] (A3) -- (A1) -- (A4);
\filldraw [gray] (A0) circle (2pt);
\filldraw [gray] (A1) circle (2pt);
\filldraw [gray] (A2) circle (2pt);
\filldraw [gray] (A3) circle (2pt);
\filldraw [gray] (A4) circle (2pt);
\end{tikzpicture}
\qquad
    \begin{tikzpicture}[scale=1]
\draw[thick, step=.5cm] (0, 0) grid (2, 2);
\draw[thick, step=.333cm, xshift=2.2cm] (0, 0) grid (2, 2);
\draw[thick, step=.333cm, yshift=-2.2cm] (0, 0) grid (2, 2);
\draw[thick, step=.5cm,  xshift=2.2cm, yshift=-2.2cm] (0, 0) grid (2, 2);

\coordinate (A0) at (0, -.1);
\coordinate (A1) at (2.1, -.1);
\coordinate (A2) at (4.2, -.1);
\coordinate (A3) at (2.1, 2);
\coordinate (A4) at (2.1, -2.2);
\draw[thick] (A0) -- (A1) -- (A2);
\draw[thick] (A3) -- (A1) -- (A4);
\filldraw [gray] (A0) circle (2pt);
\filldraw [gray] (A1) circle (2pt);
\filldraw [gray] (A2) circle (2pt);
\filldraw [gray] (A3) circle (2pt);
\filldraw [gray] (A4) circle (2pt);
\filldraw [gray] (1, -.1) circle (2pt);
\filldraw [gray] (3.2, -.1) circle (2pt);
\filldraw [gray] (2.1, -1.2) circle (2pt);
\filldraw [gray] (2.1, 1) circle (2pt);
\end{tikzpicture}
    \caption{Figure illustrating the initial mesh, the corresponding mortar space, and their first refinement for the example in \cref{sec:example-1}.}
    \label{fig:mesh-example1}
\end{figure}
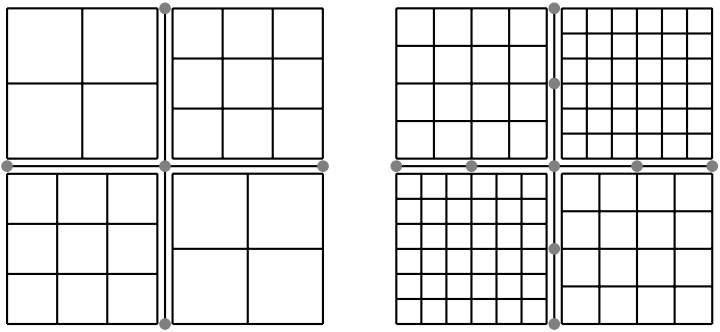

\begin{table}[H]
    \centering
    \caption{Table illustrating the absolute errors, and the convergence rates for Example 1.
    The rates are in agreement with \cref{lem:error-pre-vel}.
    }
    \begin{tabular}{|c|ccc|}
       \hline
       $H$  & $\norm{p - p_h}_{\Omega}$ & $\norm{\mat u -  \mat u_h}_{\Omega}$ & $\norm{\lambda - \lambda_h}_{L^2(\Gamma)}$ \\
       \hline
       1  & 2.73E-01 & 4.66E+00        &2.44E-01 \\
       $1/2$  & 6.23E-02 & 2.16E+00    &5.75E-02 \\
       $1/4$  & 1.49E-02  & 1.04E+00   &1.43E-02 \\
       $1/8$  & 3.66E-03 & 5.12E-01    &3.56E-03 \\
       $1/16$  & 9.07E-04 & 2.54E-01   &8.91E-04 \\
       $1/32$  &  2.31E-04 & 1.26E-01  &2.41E-04 \\ 
       \hline
       Rate & $\mathcal{O}(H^{2.04})$ & $\mathcal{O}(H^{1.04})$& $\mathcal{O}(H^{2.00})$  \\
       \hline
    \end{tabular}
    \label{tab:convergence-fem}
\end{table}


\subsection{Example 2: Pure FEEC and Pure FEM Elements Comparison}\label{sec:sine-cosine-problem}
We now consider the data arising from the problem \cref{eqn:primal-big} with $\Omega = [0, 3] \times [0, 3]$, 
\begin{align}\label{eqn:sine-cosine-forcing}
    f := 2\pi^2 \cos(\pi x)\sin(\pi y),  \qquad K = \mat I
\end{align}
with boundary condition determined by the true solution $p(x, y) = \cos(\pi x) \sin(\pi y)$.

The domain $\Omega$ is split into $9$ uniform squares whereby either a $\mathbb{Q}_1$ FEM or a pretrained FEEC element is used in each subdomain. 
The FEEC element is trained on 20480 uniformly drawn points from $[0, 1]^2$ with 16 POUs on the interior and 16 on the boundary with varying number of fine-scale knots. 
As discussed in \cite{actoradata}, increasing the number of fine-scale grids is akin to $h$-refinement in the FEM sense. 

To train the FEEC eleemnts, we use data arising from different boundary conditions and forcing terms which corresponds to approximating \cref{eqn:bar-h} and \cref{eqn:star-h}:
\begin{enumerate}
    \item a problem with the same forcing term as in \cref{eqn:sine-cosine-forcing} but homogeneous zero Dirichlet boundary condition.
    This corresponds to \cref{eqn:bar-h}).
    \item Sixteen different boundary conditions consisting of the Bernstein polynomials of forth order on the boundary (e.g. $x^4y^4$, $\binom{4}{1}x^4y(1-y)^3$ etc) and forcing term of $f = 0$. 
    This is needed so that \cref{eqn:star-h} can be approximated accurately on the FEEC elements when different boundary conditions are passed in from the mortar. 
\end{enumerate}
The solutions to the above boundary value problems were calculated by a low-order finite element solver. 
For more details regarding the training, we refer the reader to the appendix \cref{sec:feec-training}.

The mortar refinement level was chosen to be $H = 4h$ in for the FEM case.
For the FEEC local solvers, we note that the fine-scale nodes can move, resulting in non-uniform meshes; nevertheless, we still choose the same $H$ as the FEM case for comparison's sake.

In \cref{tab:sine-error} and \cref{tab:sine-error-fem}, we show the error resulting from using purely FEEC elements or purely FEM elements on all the subdomain respectively. 
The convergence rates among the two different different solvers are similar, and reflect superconvergence due to the smoothness of the problem. 
In \cref{fig:sine-sol}, we plot the true solution and its fluxes, and the approximate solution and its fluxes on the whole $[0, 3]^2$ domain solved using FEEC elements, while \cref{fig:sine-line} plots the quantities on the diagonal line from $(0, 0)$ through $(3, 3)$.
In both cases, the true solution is well-approximated. 

\begin{table}[ht]
    \centering
    \begin{tabular}{|c|ccc|}
        \hline
        FEEC fine-scale grid and mortar size & $\norm{p - p_h}_{\Omega}$ & $\norm{\mat u - \mat u_h}_{\Omega}$ & $\norm{\lambda - \lambda_H}_{L^2(\Gamma)}$\\
        \hline
        $8 \times 8, H=1/2$  & 1.47E-01	&1.48E+00	&2.20E-01\\
        $12 \times 12, H=1/3$& 6.66E-02	&8.97E-01	&8.47E-02 \\
        $16 \times 16, H=1/4$& 4.07E-02	&5.41E-01	&4.41E-02\\
        $20 \times 20, H=1/5$& 2.76E-02	&4.40E-01	&2.71E-02\\
        $24 \times 24, H=1/6$& 2.10E-02	&3.97E-01	&1.97E-02\\
        \hline 
         &$\mathcal{O}(h^{1.78})$ & $\mathcal{O}(h^{1.25})$& $\mathcal{O}(h^{2.22})$ \\
       \hline
    \end{tabular}
    \caption{Table of absolute error for the sine-cosine problem \cref{sec:sine-cosine-problem} using trained FEEC elements as the subdomain solver.
    The convergence rates are similar to the method using pure FEM elements.
    We note that the fine-scale grid roughly correspond to $h$-scaling in a standard FEM method \cite{actoradata}.}
    \label{tab:sine-error}
\end{table}

\begin{table}[ht]
    \centering
    \begin{tabular}{|c|ccc|}
        \hline
        FEM fine-scale grid and mortar size & $\norm{p - p_h}_{\Omega}$ & $\norm{\mat u - \mat u_h}_{\Omega}$ & $\norm{\lambda - \lambda_H}_{L^2(\Gamma)}$\\
        \hline
        $8 \times 8, H=1/2$   & 1.67E-01    & 1.54E+00&	2.16E-01\\
        $12 \times 12, H=1/3$ & 7.04E-02	& 8.54E-01&	8.16E-02 \\
        $16 \times 16, H=1/4$ & 3.91E-02	& 5.76E-01&	4.31E-02\\
        $20 \times 20, H=1/5$ & 2.49E-02	& 4.30E-01&	2.68E-02\\
        $24 \times 24, H=1/6$ & 1.72E-02	& 3.41E-01&	1.83E-02\\
        $28 \times 28, H=1/7$ & 1.26E-02	& 2.82E-01&	1.33E-02\\
        $32 \times 32, H=1/8$ & 9.64E-03	& 2.40E-01&	1.01E-02\\
        $36 \times 36, H=1/9$ & 7.61E-03	& 2.08E-01&	7.98E-03\\
        $40 \times 40, H=1/10$& 6.15E-03	& 1.84E-01&	6.45E-03\\
        \hline 
         &$\mathcal{O}(h^{2.04})$ & $\mathcal{O}(h^{1.31})$& $\mathcal{O}(h^{2.16})$ \\
       \hline
    \end{tabular}
    \caption{Table of absolute error for the sine-cosine problem \cref{sec:sine-cosine-problem} with FEM elements as the local solvers.}
    \label{tab:sine-error-fem}
\end{table}

\begin{figure}[ht]
     \centering
     \begin{subfigure}[b]{0.32\textwidth}
         \centering
         \includegraphics[trim={3cm 1cm 1cm 1cm},clip, width=\textwidth]{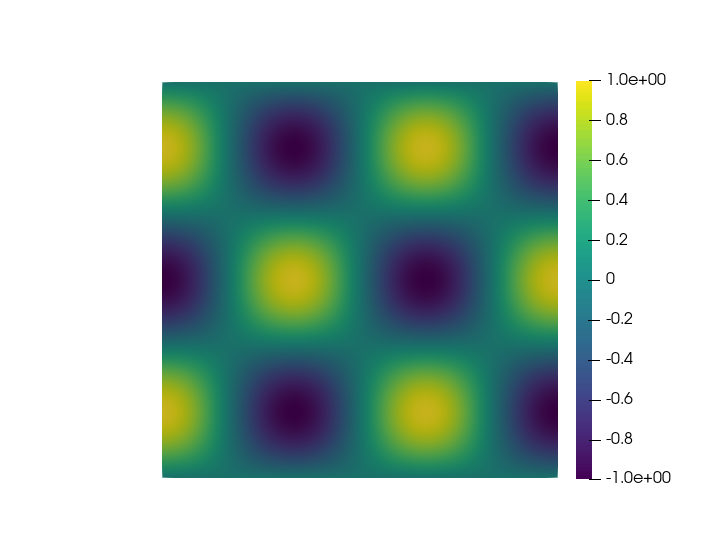}
         \caption{True pressure}
     \end{subfigure}
     \hfill
     \begin{subfigure}[b]{0.32\textwidth}
         \centering
         \includegraphics[trim={3cm 1cm 1cm 1cm},clip,width=\textwidth]{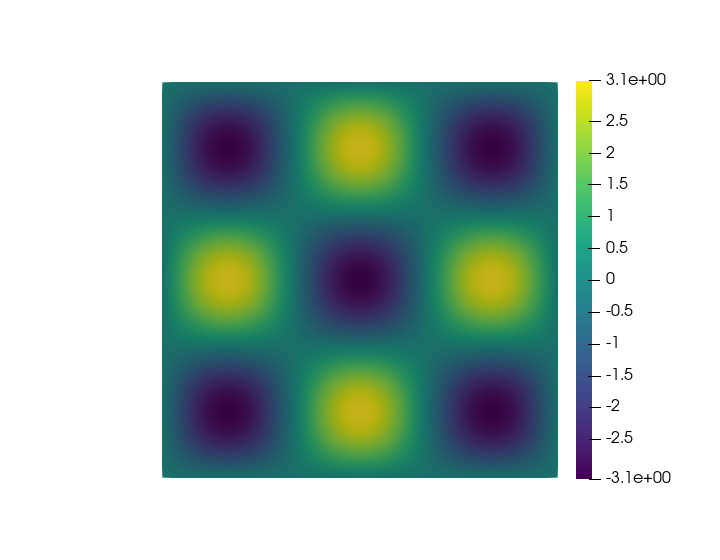}
        \caption{True gradient in $x$}
     \end{subfigure}
     \hfill
     \begin{subfigure}[b]{0.32\textwidth}
         \centering
         \includegraphics[trim={3cm 1cm 1cm 1cm},clip,width=\textwidth]{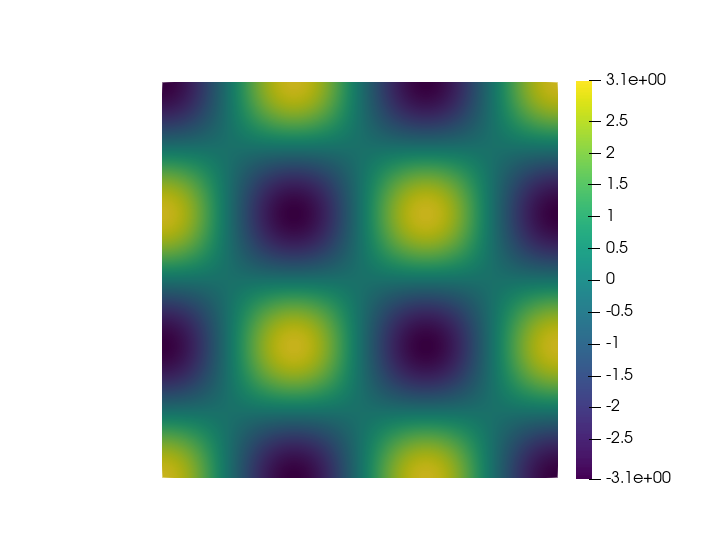}
         \caption{True gradient in $y$}
     \end{subfigure} 
     \\
          \begin{subfigure}[b]{0.32\textwidth}
         \centering
         \includegraphics[trim={3cm 1cm 1cm 1cm},clip, width=\textwidth]{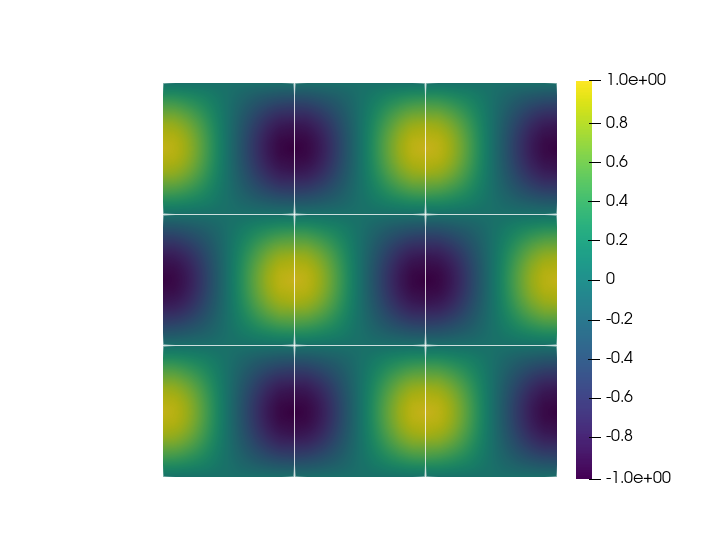}
         \caption{Estimated pressure}
     \end{subfigure}
     \hfill
     \begin{subfigure}[b]{0.32\textwidth}
         \centering
         \includegraphics[trim={3cm 1cm 1cm 1cm},clip,width=\textwidth]{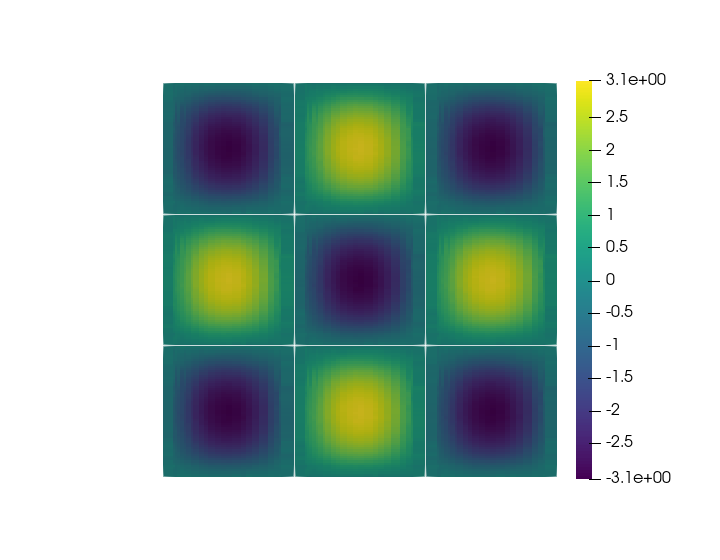}
         \caption{Estimated gradient in $x$}
     \end{subfigure}
     \hfill
     \begin{subfigure}[b]{0.32\textwidth}
         \centering
         \includegraphics[trim={3cm 1cm 1cm 1cm},clip,width=\textwidth]{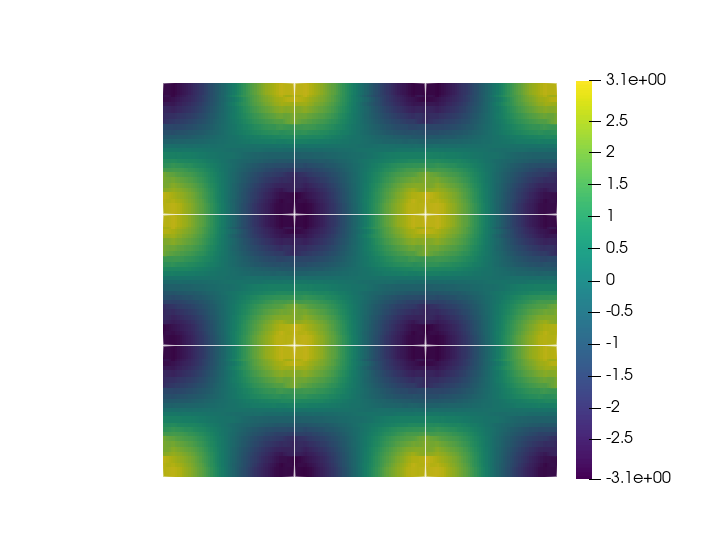}
         \caption{Estimated gradient in $y$}
     \end{subfigure}
        \caption{
        Plot of the true (first row) and estimated (second row) solution for the sine-cosine problem \cref{sec:sine-cosine-problem} with pure FEEC elements consisting of $24 \times 24$ fine scale nodes, and $H= 1/6$. 
        As expected, the solution is well-approximated by the FEEC elements. 
        }
        \label{fig:sine-sol}
\end{figure}

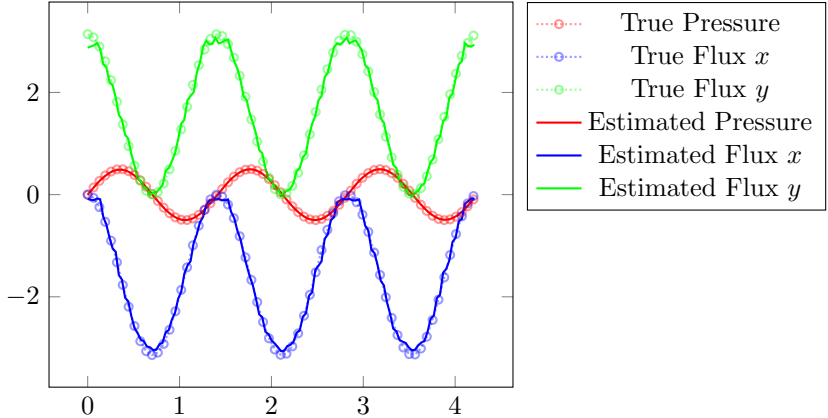
\begin{figure}
    \centering
    \begin{tikzpicture}
    \begin{axis}[scale=.9,
        legend pos=outer north east]
    
    \addplot[red, mark=o, mark options=solid, densely dotted, thick, opacity=.4, mark size=1.5] table[col sep=comma, each nth point=15, x=arclength, y=funcvalues] {sine-cosine-data/true_vals.csv};
    \addlegendentry{True Pressure};
    \addplot[blue, mark=o,  mark options=solid, densely dotted, thick, opacity=.4, mark size=1.5] table[col sep=comma, each nth point=15, x=arclength, y=gradx] {sine-cosine-data/true_vals.csv};
    \addlegendentry{True Flux $x$};
    \addplot[green, mark=o,  mark options=solid, densely dotted, thick, opacity=.4, mark size=1.5] table[col sep=comma, each nth point=15, x=arclength, y=grady] {sine-cosine-data/true_vals.csv};
    \addlegendentry{True Flux $y$};
    \addplot[red, mark=none, thick] table[col sep=comma, each nth point=8, x=arclength, y=pressure] {sine-cosine-data/est_vals.csv};
    \addlegendentry{Estimated Pressure};
    \addplot[blue, mark=none, thick] table[col sep=comma, each nth point=8, x=arclength, y=velx] {sine-cosine-data/est_vals.csv};
    \addlegendentry{Estimated Flux $x$};
    \addplot[green, mark=none, thick] table[col sep=comma, each nth point=8, x=arclength, y=vely] {sine-cosine-data/est_vals.csv};
    \addlegendentry{Estimated Flux $y$};
    
    \end{axis}
    \end{tikzpicture}
    \caption{Profile of the true and estimate solutions for the sine-cosine problem \cref{sec:sine-cosine-problem} on the line $(0, 0)$ to $(3, 3)$ using pure FEEC elements with $24 \times 24$ fine scale nodes and $H=1/6$.
    While there are small fluctuations in the FEEC approximation, it is clear that both the pressure and fluxes are captured.}
    \label{fig:sine-line}
\end{figure}

\subsection{Example 3: Hybrid Methods}\label{sec:cylinder}
We next showcase the ability to use a hybrid approach whereby standard finite elements are interfaced to FEEC elements allowing for areas with unknown features to be learned using FEEC elements, and smooth areas using classical FEM methods. 

We assume data is obtained from the problem \cref{eqn:primal-big} on $\Omega = [-1.5, 1.5]^2$ with the parameters 
\begin{align}
    f := 0, \qquad K(\mat x) = \begin{cases}
            \begin{pmatrix}
        k & 0 \\
        0 & k 
    \end{pmatrix}, & \norm{\mat x} \le b \\
    \begin{pmatrix}
        1 & 0 \\
        0 & 1
    \end{pmatrix}, & \norm{\mat x} > b \\
    \end{cases}
    \label{eqn:cylinder-kappa}
\end{align}
with $b = .2, k = 10$. 
The Dirichlet boundary imposed such that the true solution is
\begin{align*}
    u := \begin{cases}
            x \left(1-\frac{b^2 (k-1)}{(k+1) \left(x^2+y^2\right)}\right)  & \norm{\mat x} > b \\
            \frac{2}{k+1}x & \norm{\mat x} \le b
    \end{cases}
\end{align*}
This particular equation arises in electrostatics when examining the case where a conducting cylinder with radius $b$ and capacitance $k$ is placed within a uniform field of strength 1 \cite[\S 4.03]{Smythe1989-tk}. 
Note that outside of a radius around the origin, the diffusion problem is easy to solve. 

We split the domain is split into 9 congruent squares with the center square $[-0.5, 0.5]^2$ consisting of a FEEC element to capture the change in material coefficients while the remaining eight subdomains utilizing a simple, low-order FEM space with $8 \times 8$ quads. 
The FEEC element is trained on 12 different boundary conditions corresponding to the 12 third-order Bernstein polynomials on the boundary as in the previous example.
We note that in training, only the solution and its fluxes are provided, meaning the material coefficient \cref{eqn:cylinder-kappa} is not fully exposed to the FEEC element. 
A total of 16 POUs are used on the interior and the boundary. 
We choose to use a mortar of $H = 1/4$. 

\begin{table}[ht]
    \centering
    \begin{tabular}{|c|ccc|}
        \hline
        FEEC fine-scale grid  & $\norm{p - p_h}_{\Omega}$ & $\norm{\mat u - \mat u_h}_{\Omega}$ & $\norm{\lambda - \lambda_H}_{L^2(\Gamma)}$\\
        \hline
        $8 \times 8$ & 5.69E-03 ($0.224\%$) & 1.64E-01 ($5.48\%$) & 6.41E-3\\
        $16 \times 16$ & 3.07E-03 ($0.121\%$) &1.16E-01 ($3.87\%$) &4.66E-3\\
        $24 \times 24$ & 2.01E-03 ($0.079\%$) & 8.29E-02 ($2.77\%$)&2.50E-3\\
        \hline 
         &$\mathcal{O}(h^{.939})$ & $\mathcal{O}(h^{.607})$& $\mathcal{O}(h^{.814})$ \\
       \hline
    \end{tabular}
    \caption{Table of absolute and relative errors for the cylinder problem \cref{sec:cylinder} using a hybrid approach.
    While we do not expect a full convergence as we are only refining the singular FEEC element on  $[-.5, .5]^2$ while keeping the mortar spaces and FEM spaces constant, we do observe that using the finest FEEC element gives significantly better results.}
    \label{tab:cylinder-error}
\end{table}

We show the error over the whole domain in \cref{tab:cylinder-error} from only refining the fine-scale grid of the FEEC element in $[-.5, .5]^2$. 
A full rate of convergence is not expected since \cref{thm:err-est} requires both the mortar space and the local subdomain solvers to be refined in tandem.
We do not consider refinement with the mortar here as Assumption 1 might be violated from either the movement of fine-scale knots of the FEEC elements, or the fact that the mesh size of the FEM solvers are fixed to be very coarse.

In \cref{fig:cyl-sol}, we plot the true and estimated solution to the problem. 
Note that the trained FEEC element managed to resolve the circular inclusion and the subtleties in the fluxes when the true solution is not explicitly given in the training data. 
Furthermore, we plot the true and estimated solution profiles in \cref{fig:cyl-sol-line}.
From the plots, it is clear that while there are small spurious fluctuations in the estimated solutions, that the error decreases as we refine the FEEC model. 
In \cref{fig:cyl-fem-comparison}, we compare the FEEC profiles to the profile obtained using a $24 \times 24$ FEM on $[-.5, .5]^2$ instead. 
Note that the oscillations are greatly reduced by using the FEEC elements due to the adaptivity of the fine-scale mesh. 

\begin{figure}[ht]
     \centering
     \begin{subfigure}[b]{0.32\textwidth}
         \centering
         \includegraphics[trim={3cm 1cm 0cm 1cm},clip, width=\textwidth]{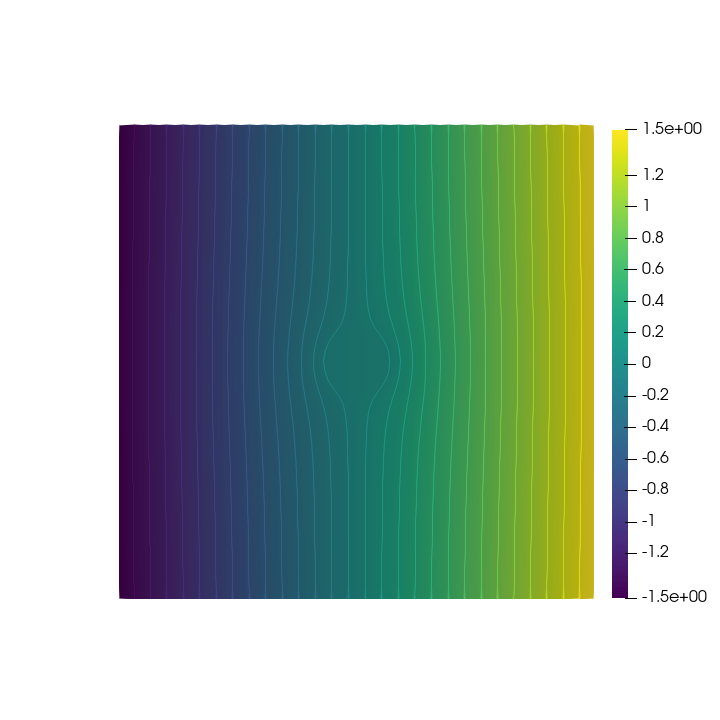}
         \caption{True pressure}
     \end{subfigure}
     \hfill
     \begin{subfigure}[b]{0.32\textwidth}
         \centering
         \includegraphics[trim={3cm 1cm 0cm 1cm},clip,width=\textwidth]{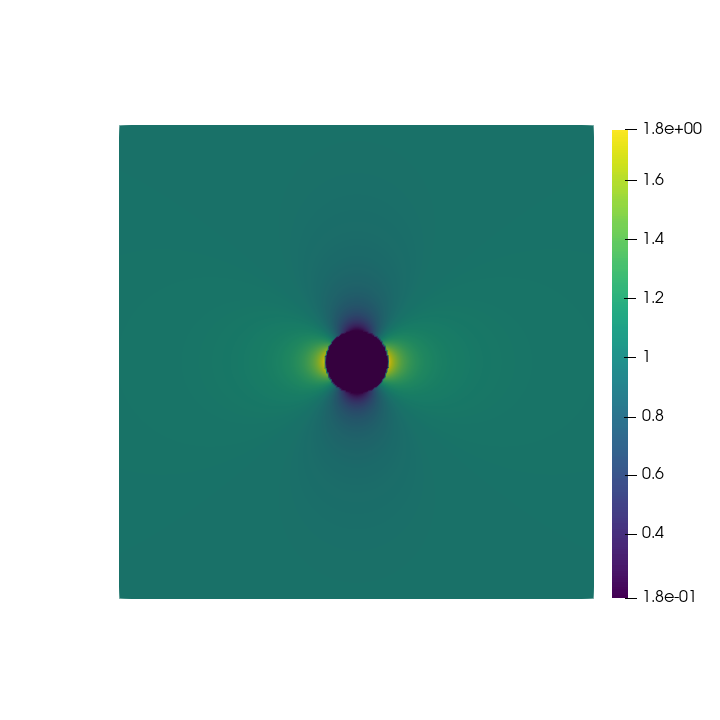}
        \caption{True gradient in $x$}
     \end{subfigure}
     \hfill
     \begin{subfigure}[b]{0.32\textwidth}
         \centering
         \includegraphics[trim={3cm 1cm 0cm 1cm},clip,width=\textwidth]{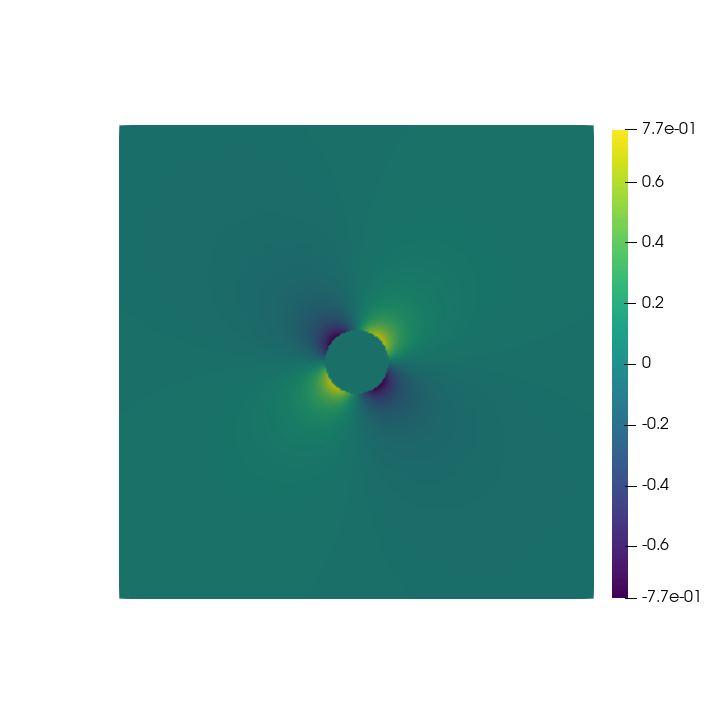}
         \caption{True gradient in $y$}
     \end{subfigure} 
     \\
          \begin{subfigure}[b]{0.32\textwidth}
         \centering
         \includegraphics[trim={3cm 1cm 0cm 1cm},clip, width=\textwidth]{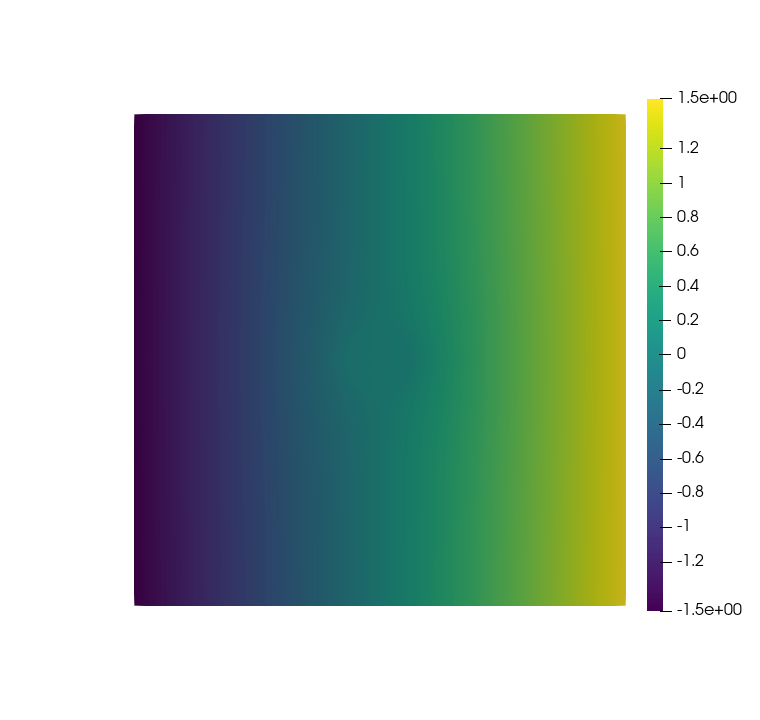}
         \caption{Estimated pressure}
     \end{subfigure}
     \hfill
     \begin{subfigure}[b]{0.32\textwidth}
         \centering
         \includegraphics[trim={3cm 1cm 0cm 1cm},clip,width=\textwidth]{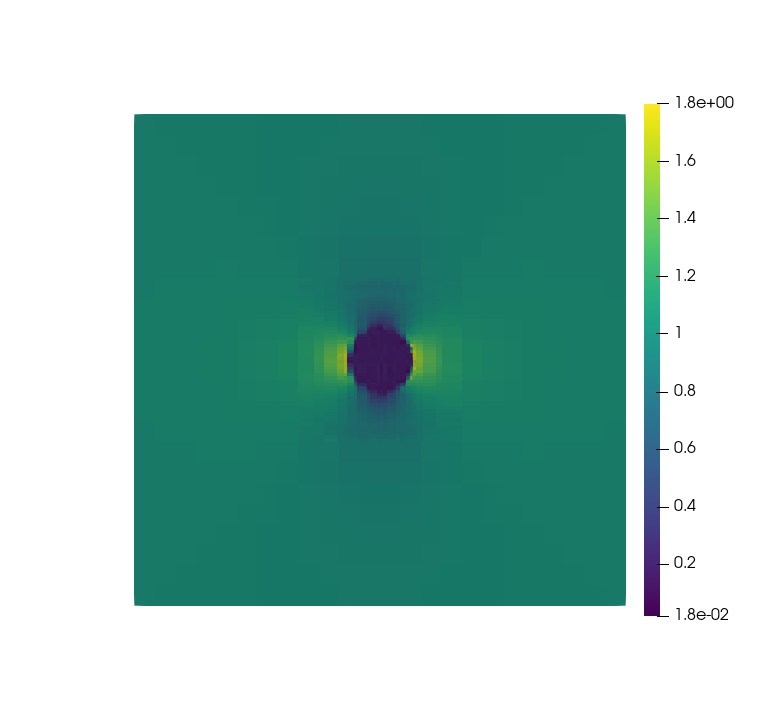}
         \caption{Estimated gradient in $x$}
     \end{subfigure}
     \hfill
     \begin{subfigure}[b]{0.32\textwidth}
         \centering
         \includegraphics[trim={3cm 1cm 0cm 1cm},clip,width=\textwidth]{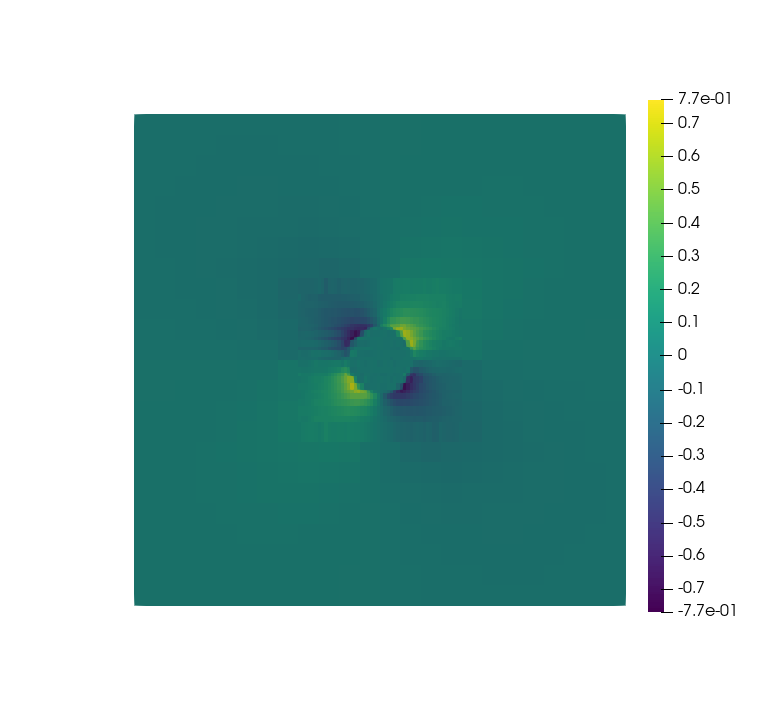}
         \caption{Estimated gradient in $y$}
     \end{subfigure}
        \caption{
        Figure of the true solution and estimated value for the cylinder problem \cref{sec:cylinder}. 
        The estimated solution uses a single FEEC element with $24 \times 24$ fine-scale knots in the center-most subdomain with the remaining subdomains using FEM of just $8 \times 8$ elements. 
        }
        \label{fig:cyl-sol}
\end{figure}

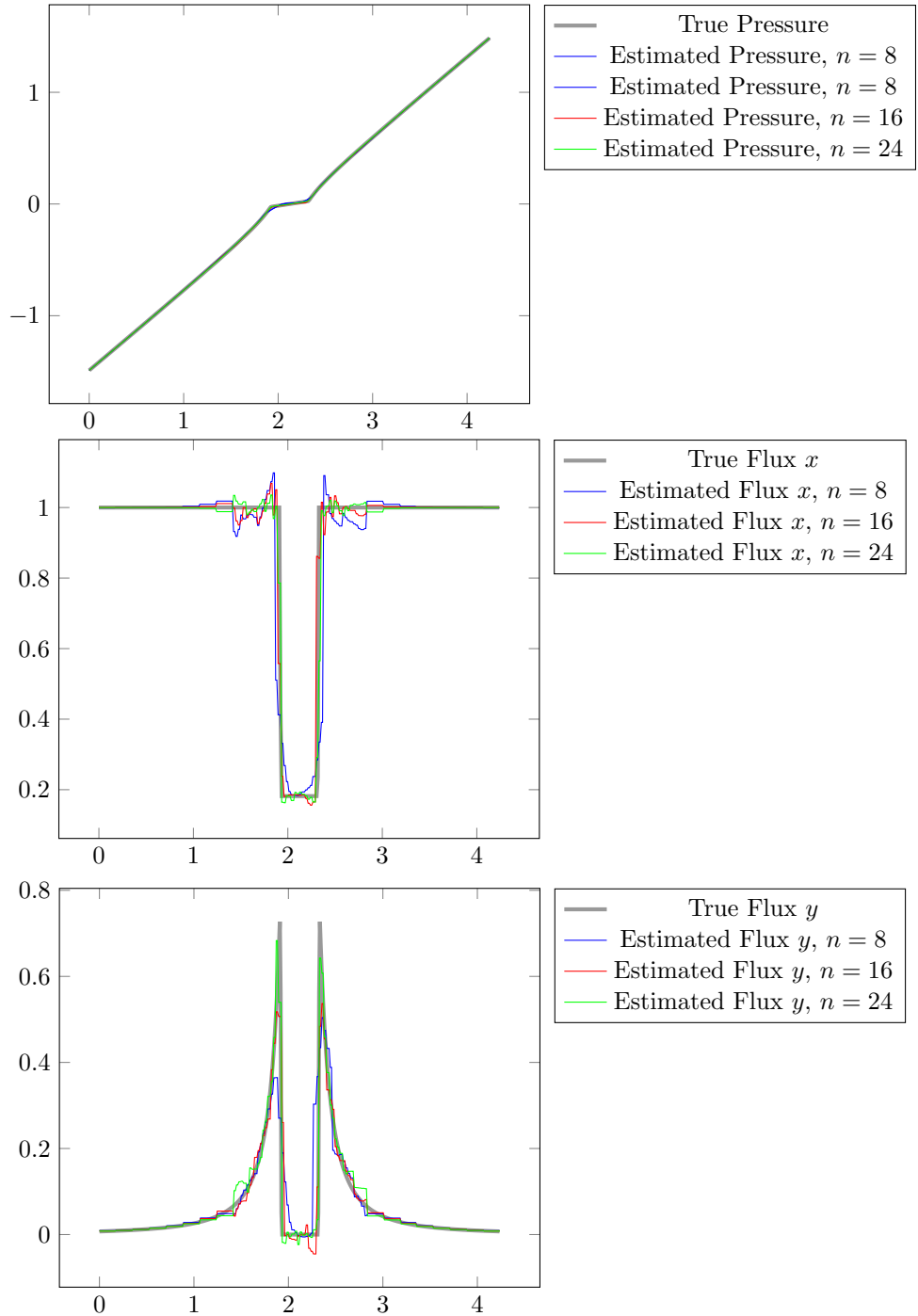
\begin{figure}
     \centering
        \begin{tikzpicture}
        \begin{axis}[scale=.99,
            legend pos=outer north east,
              ]
        \addplot[black, mark=circle, opacity=.4, ultra thick] table[col sep=comma, each nth point=3, x="arc_length", y="func_values"] {cylinder-data/true_data.csv};
        \addlegendentry{True Pressure};
        \addplot[blue, mark=none] table[col sep=comma, each nth point=3, x="arc_length", y expr=\thisrow{"pressure"}] {cylinder-data/est_data8.csv}; 
        \addlegendentry{Estimated Pressure, $n=8$};
        \addplot[blue, mark=none] table[col sep=comma, each nth point=3, x="arc_length", y="pressure"] {cylinder-data/est_data8.csv};
        \addlegendentry{Estimated Pressure, $n=8$};
        \addplot[red, mark=none] table[col sep=comma, each nth point=3, x="arc_length", y="pressure"] {cylinder-data/est_data16.csv};
        \addlegendentry{Estimated Pressure, $n=16$};
        \addplot[green, mark=none] table[col sep=comma, each nth point=3, x="arc_length", y="pressure"] {cylinder-data/est_data24.csv};
        \addlegendentry{Estimated Pressure, $n=24$};
        \end{axis}
        \end{tikzpicture}
    \begin{tikzpicture}
        \begin{axis}[scale=.99,
            legend pos=outer north east,
              ]
        \addplot[black, mark=circle, opacity=.4, ultra thick] table[col sep=comma, each nth point=3, x="arc_length", y="grad_x"] {cylinder-data/true_data.csv};
        \addlegendentry{True Flux $x$};
        \addplot[blue, mark=none] table[col sep=comma, each nth point=3, x="arc_length", y="vel_x"] {cylinder-data/est_data8.csv};
        \addlegendentry{Estimated Flux $x$, $n=8$};
        \addplot[red, mark=none] table[col sep=comma, each nth point=3, x="arc_length", y="vel_x"] {cylinder-data/est_data16.csv};
        \addlegendentry{Estimated Flux $x$, $n=16$};
        \addplot[green, mark=none] table[col sep=comma, each nth point=3, x="arc_length", y="vel_x"] {cylinder-data/est_data24.csv};
        \addlegendentry{Estimated Flux $x$, $n=24$};
        \end{axis}
        \end{tikzpicture}
        \begin{tikzpicture}
        \begin{axis}[scale=.99,
            legend pos=outer north east,
              ]
        \addplot[black, mark=circle, opacity=.4, ultra thick] table[col sep=comma, each nth point=2, x="arc_length", y="grad_y"] {cylinder-data/true_data.csv};
        \addlegendentry{True Flux $y$};
        \addplot[blue, mark=none] table[col sep=comma, each nth point=3, x="arc_length", y="vel_y"] {cylinder-data/est_data8.csv};
        \addlegendentry{Estimated Flux $y$, $n=8$};
        \addplot[red, mark=none] table[col sep=comma, each nth point=3, x="arc_length", y="vel_y"] {cylinder-data/est_data16.csv};
        \addlegendentry{Estimated Flux $y$, $n=16$};
        \addplot[green, mark=none] table[col sep=comma, each nth point=3, x="arc_length", y="vel_y"] {cylinder-data/est_data24.csv};
        \addlegendentry{Estimated Flux $y$, $n=24$};
        \end{axis}
        \end{tikzpicture}
        \caption{
        Trace plot from $[-1.5, -1.5]$ to $[1.5, 1.5]$ of the cylinder problem \cref{sec:cylinder} for FEEC models with different fine-scale nodes $n$ in the subdomain $[-.5, .5]^2$. 
        As we refine the number of fine-scale nodes we use, the jumps in the fluxes are increasingly more well-resolved with less fluctuations. 
        }
        \label{fig:cyl-sol-line}
\end{figure}

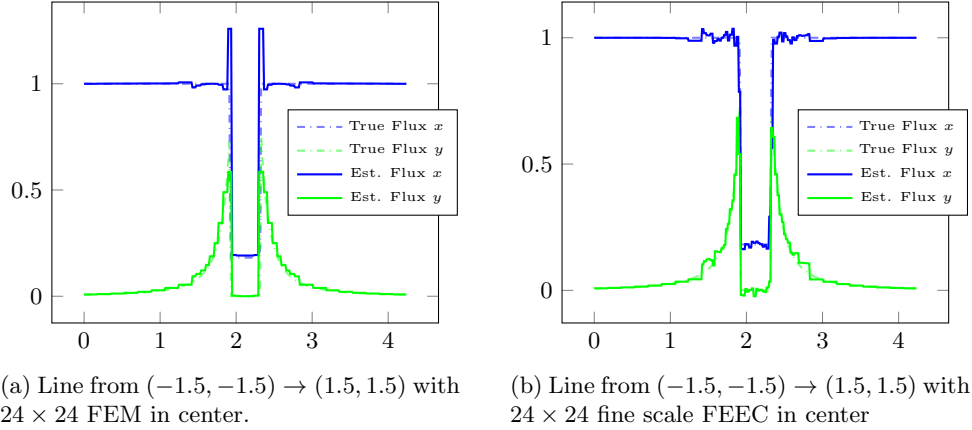
\begin{figure}[ht]
     \centering
     \begin{subfigure}[b]{0.47\textwidth}
         \centering
        \begin{tikzpicture}
        \begin{axis}[scale=.75,
             ticklabel style = {font=\small},
              legend style={font=\tiny,
              at={(1.05,0.5)},anchor=east}]
        
        \addplot[blue, mark=circle, dash dot, thick, opacity=.5] table[col sep=comma, each nth point=3, x="arc_length", y="grad_x"] {cylinder-data/true_data.csv};
        \addlegendentry{True Flux $x$};
        \addplot[green, mark=circle, dash dot, thick, opacity=.5] table[col sep=comma, each nth point=3, x="arc_length", y="grad_y"] {cylinder-data/true_data.csv};
        \addlegendentry{True Flux $y$};
        \addplot[blue, mark=none, thick] table[col sep=comma, each nth point=3, x="arc_length", y="vel_x"] {cylinder-data/fem_data.csv};
        \addlegendentry{Est. Flux $x$};
        \addplot[green, mark=none, thick] table[col sep=comma, each nth point=3, x="arc_length", y="vel_y"] {cylinder-data/fem_data.csv};
        \addlegendentry{Est. Flux $y$};
        \end{axis}
        \end{tikzpicture}
         \caption{Line from $(-1.5, -1.5) \to (1.5, 1.5)$ with $24 \times 24$ FEM in center. }
     \end{subfigure} 
     \hfill
     \begin{subfigure}[b]{0.47\textwidth}
         \centering
        \begin{tikzpicture}
        \begin{axis}[scale=.75,
             ticklabel style = {font=\small},
              legend style={font=\tiny,
              at={(1.05,0.5)},anchor=east}]
        
        \addplot[blue, mark=circle, dash dot, thick, opacity=.5] table[col sep=comma, each nth point=2, x="arc_length", y="grad_x"] {cylinder-data/true_data.csv};
        \addlegendentry{True Flux $x$};
        \addplot[green, mark=circle, dash dot, thick, opacity=.5] table[col sep=comma, each nth point=2, x="arc_length", y="grad_y"] {cylinder-data/true_data.csv};
        \addlegendentry{True Flux $y$};
        \addplot[blue, mark=none, thick] table[col sep=comma, each nth point=2, x="arc_length", y="vel_x"] {cylinder-data/est_data24.csv};
        \addlegendentry{Est. Flux $x$};
        \addplot[green, mark=none, thick] table[col sep=comma, each nth point=2, x="arc_length", y="vel_y"] {cylinder-data/est_data24.csv};
        \addlegendentry{Est. Flux $y$};
        \end{axis}
        \end{tikzpicture}
         \caption{Line from $(-1.5, -1.5) \to (1.5, 1.5)$ with $24 \times 24$ fine scale FEEC in center}
     \end{subfigure}
    \hfill
        \caption{
        Comparison between using FEM (left) and FEEC (right) solvers in the material discontinuity region $[-.5, .5]^2$ for the cylinder problem \cref{sec:cylinder}. 
        Note the overshoot in the discontinuity in the $x$ component of the flux for the pure finite elements case, resulting in a relative error of over $25\%$ near the discontinuity.
        On the other hand, the FEEC element is able to reduce that fluctuation near the discontinuity to less than $5\%$ using the same number of fine-scale knots due to the adaptivity. 
        }
        \label{fig:cyl-fem-comparison}
\end{figure}

\subsection{Example 4: Subdomain Refinement with FEEC}\label{sec:subdomain-refinement-examples}
In this next class of examples, we will consider three separate problems whereby the number of subdomains is increased with no further refinement in either the subdomain-level solver, or the number of mortar degrees of freedom per subdomain. 
This is a non-standard example case in the context of domain decomposition methods, but is extremely useful in the case where machine-learned elements are used.

We hypothesize that smaller subdomains means that there are fewer features for each FEEC element to learn, meaning that the optimization procedure will usually result in smaller local losses. 
The smaller number of features to capture also means that we can use FEEC elements without as many fine-scale nodes, decreasing computational costs in training. 
Furthermore, in the case with large amount of data points, smaller subdomains means that one can speed up the training tremendously as all the training points can now fit on a single GPU. 

In the first two examples, we perform a similar training procedure as before where on each subdomain, a suite of boundary conditions are used to train the local Whitney elements. 
The last example is more representative of a possible usage case where only a single reference solution is provided with realistic multiscale features.

\subsubsection{Stripe Problem}\label{sec:stripes-problem}
Consider data arising from the problem \cref{eqn:primal-big} with $\Omega = [0, n] \times [0, n]$ for $n$ a positive integer, 
\begin{align}
    f := 0, \quad g := x, \qquad K = \kappa_i \mathbf{I}
\end{align}
where $\mathbf{I}$ is the $\mathbb{R}^{2 \times 2}$ identity matrix, and where if $\lfloor y \rfloor$ is even, then 
\begin{align*}
    \kappa_i = \begin{cases}
        1 & 0 \le y < .4\\ 
        .4 & .4 \le y < .8\\
        .8 & .8 \le y < 1
    \end{cases}
\end{align*}
otherwise, 
\begin{align*}
    \kappa_i = \begin{cases}
        .8 & 0 \le y < .2\\ 
        .3 & .4 \le y < .6\\
        .9 & .8 \le y < 1\\
    \end{cases}
\end{align*}
While the true solution for the pressure is trivially $p(x) = x$ for all $n$, the difficulty lies in the ability of the discrete solution to capture the discontinuous velocities 
\begin{align*}
    \mat u(x) := \begin{pmatrix}
        \kappa_i \\ 0
    \end{pmatrix}
\end{align*}
which arises. 

Two FEEC elements of size $[0, 1]^2$ are trained: one to capture the case where $\lfloor y \rfloor$ is even, and another for the odd case. 
For both FEEC elements, a total of $20 \times 20$ fine scale nodes were used, which was subsequently compressed down to 14 POUs on the interior and 14 on the boundary. 
To train the two FEEC systems, we minimize the MSE against only four PDEs corresponding to the Laplace equation $f = 0$ with the boundary conditions $xy, x(1-y), (1-x)y, (1-x)(1-y)$ on 20480 randomly sampled points on $[0, 1]^2$.
As for the mortar space, the lowest order space $H=1$ is used. 
Note that in this case, Assumption 1 is trivially satisfied.

In \cref{fig:stripes-pressure-solution}, we show the solutions of the pressure for $n = 2, 3, 5$.
We see that we recover the true pressure easily as it is just a simple linear function. 
We note that the notion of convergence is not applicable in this case since the domain and problem itself are actually changing as we increase $n$. 

In \cref{fig:stripes-fx-solution}, we show the $x$-component of the gradient; it is clear that the stripes structure is well-preserved even as we introduce more subdomains into the mortar space. 
While the error estimates \cref{lem:error-pre-vel} cannot support this statement due to the usage of crude $L^\infty$ norms, this is indication that, at least numerically, requirement {\bf R2} is satisfied.
We also plot the estimate solution profile on the line $(2.5, 0)$ to $(2.5, 5)$ in \cref{fig:stripes-line} for the case of $n=5$. 
From this view, it's clear that the actual numerical values are in good agreement with the true solution. 

\begin{figure}[ht]
     \centering
     \begin{subfigure}[b]{0.32\textwidth}
         \centering
         \includegraphics[trim={21cm 6cm 20cm 6cm},clip, width=\textwidth]{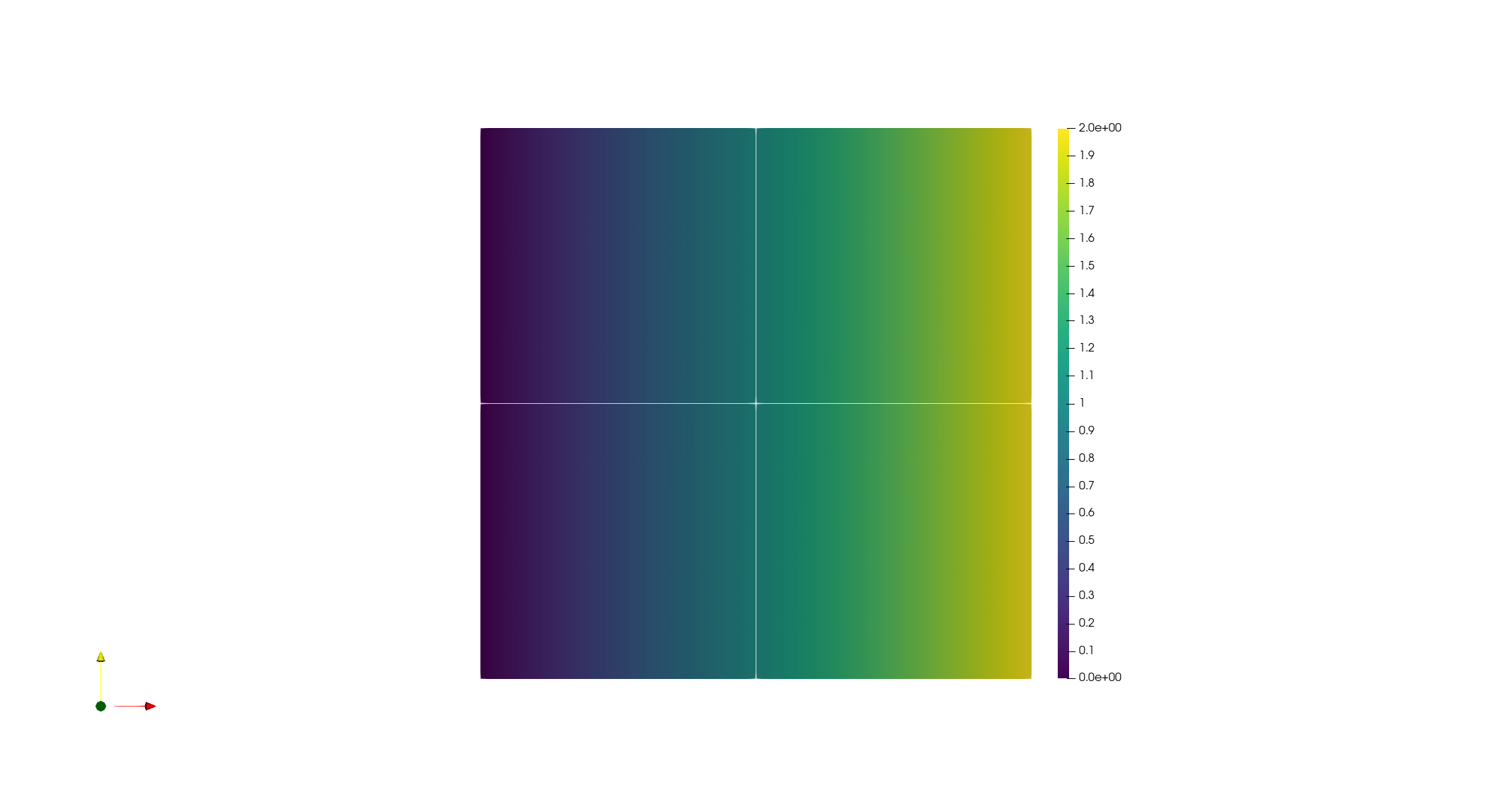}
     \end{subfigure}
     \hfill
     \begin{subfigure}[b]{0.32\textwidth}
         \centering
         \includegraphics[trim={21cm 6cm 20cm 6cm},clip,width=\textwidth]{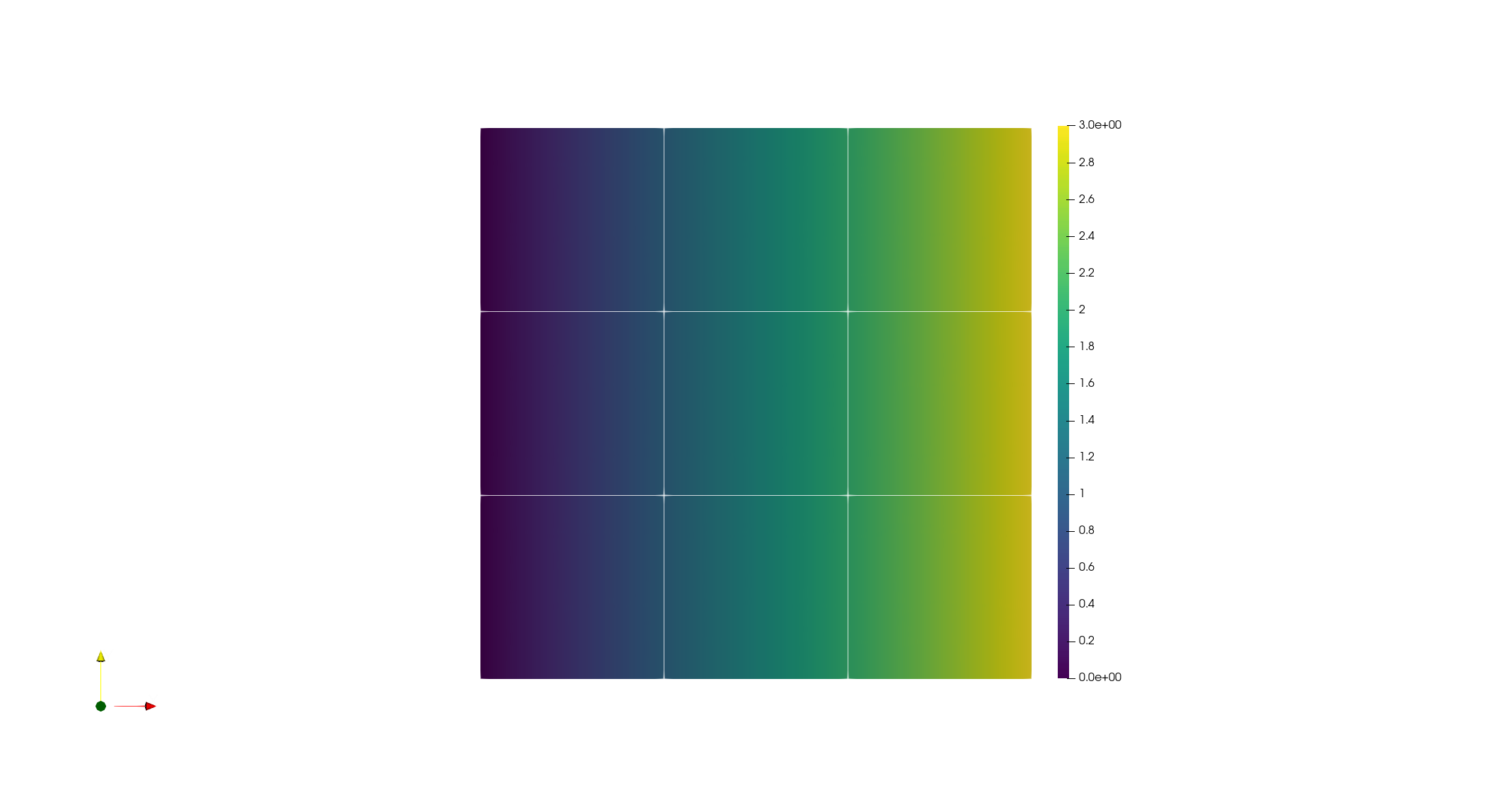}
     \end{subfigure}
     \hfill
     \begin{subfigure}[b]{0.32\textwidth}
         \centering
         \includegraphics[trim={21cm 6cm 20cm 6cm},clip,width=\textwidth]{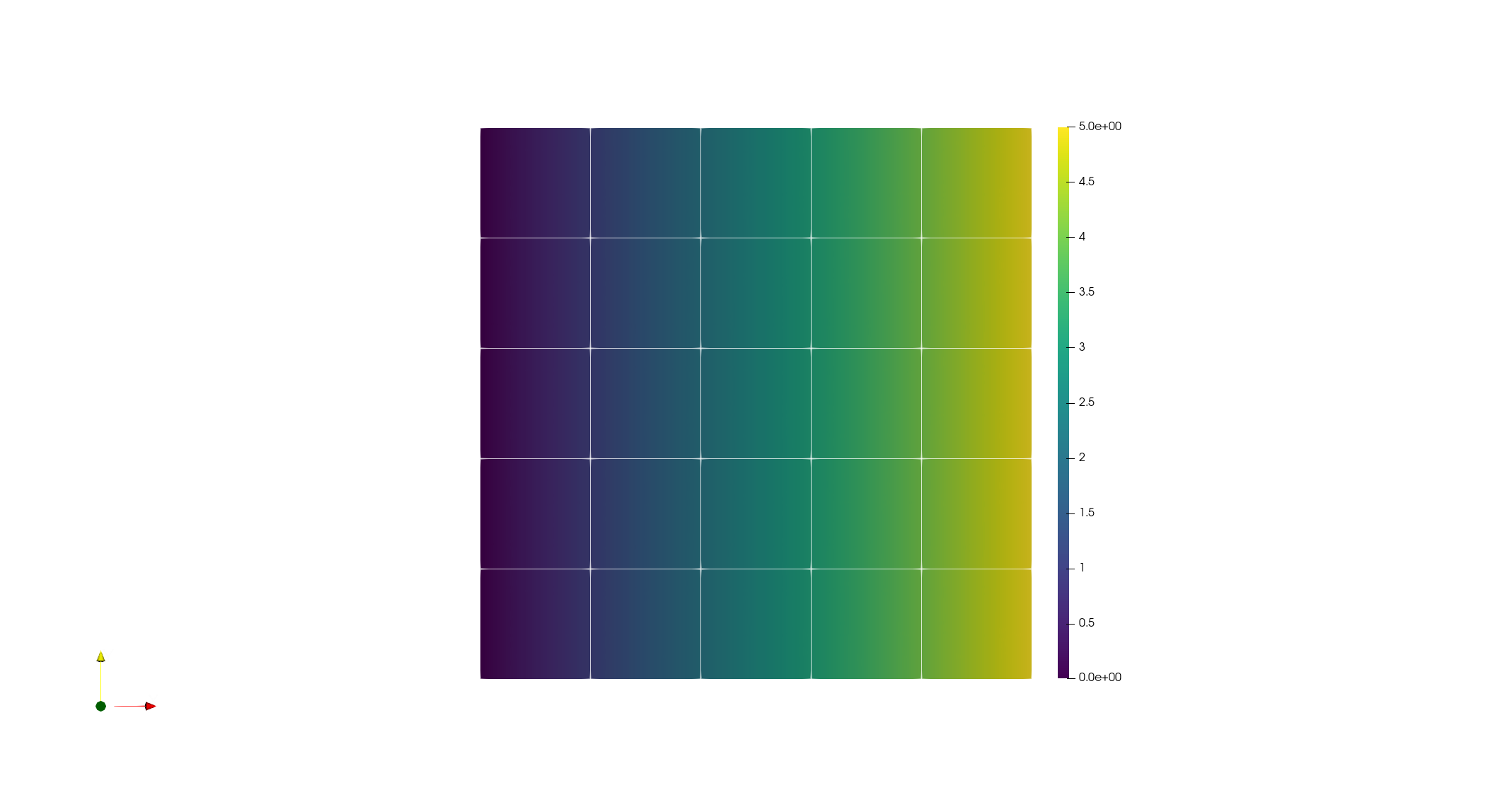}
     \end{subfigure} 
        \caption{
            Figure of the pressure solutions obtained for the stripes problem \cref{sec:stripes-problem} for on increasingly larger domain $[0, 2]^2, [0, 3]^2, [0, 5]^2$ using FEEC elements of $14 \times 14$ fine-scale knots and a very coarse mortar of $H = 1$. 
            Importantly, we note that, from left to right, the domain $\Omega$ of the problem is being increased and we are not depicting a refinement process.
        }
        \label{fig:stripes-pressure-solution}
\end{figure}

\begin{figure}[ht]
     \centering
     \begin{subfigure}[b]{0.32\textwidth}
         \centering
         \includegraphics[trim={21cm 6cm 20cm 6cm},clip, width=\textwidth]{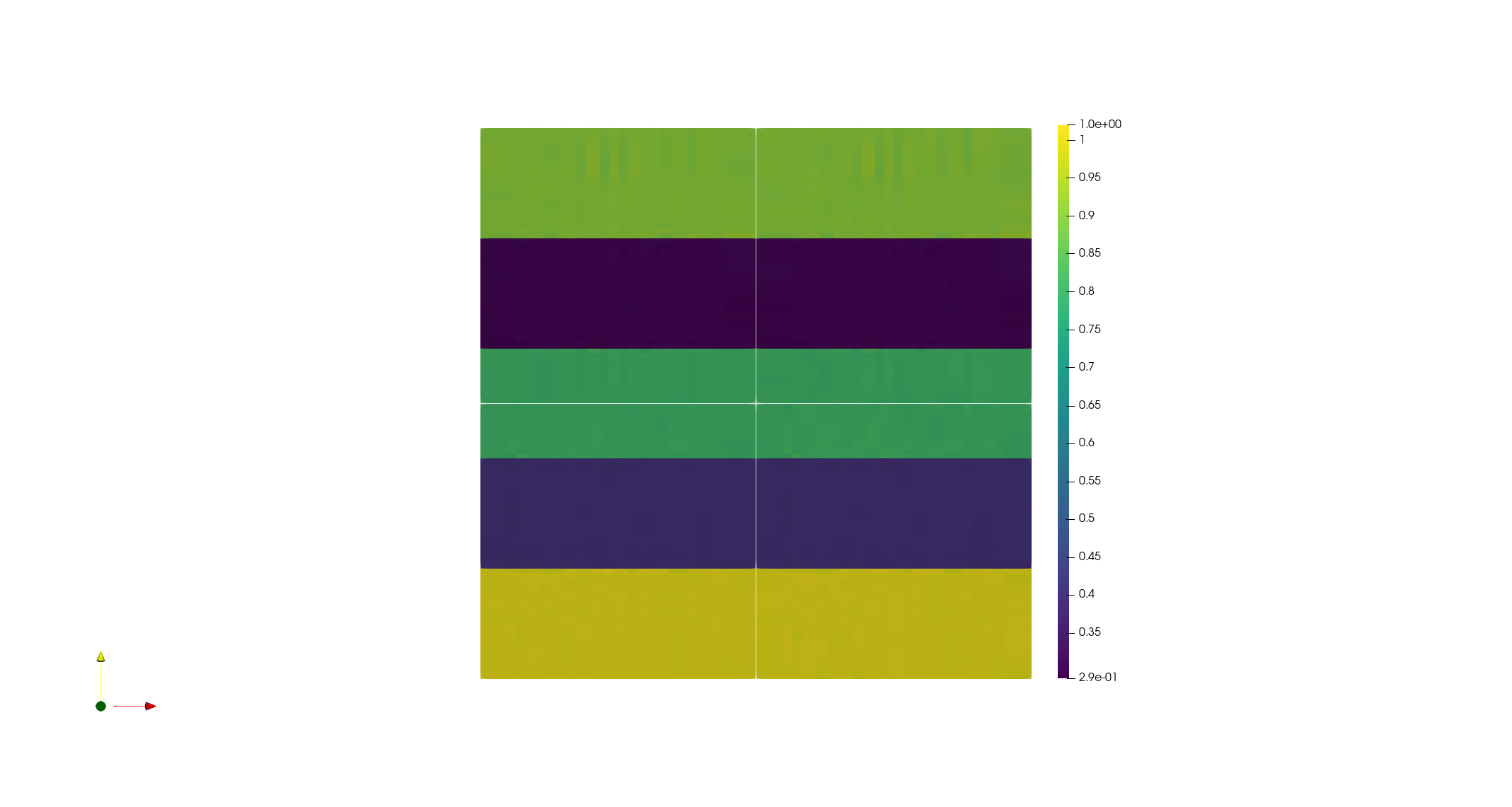}
     \end{subfigure}
     \hfill
     \begin{subfigure}[b]{0.32\textwidth}
         \centering
         \includegraphics[trim={21cm 6cm 20cm 6cm},clip,width=\textwidth]{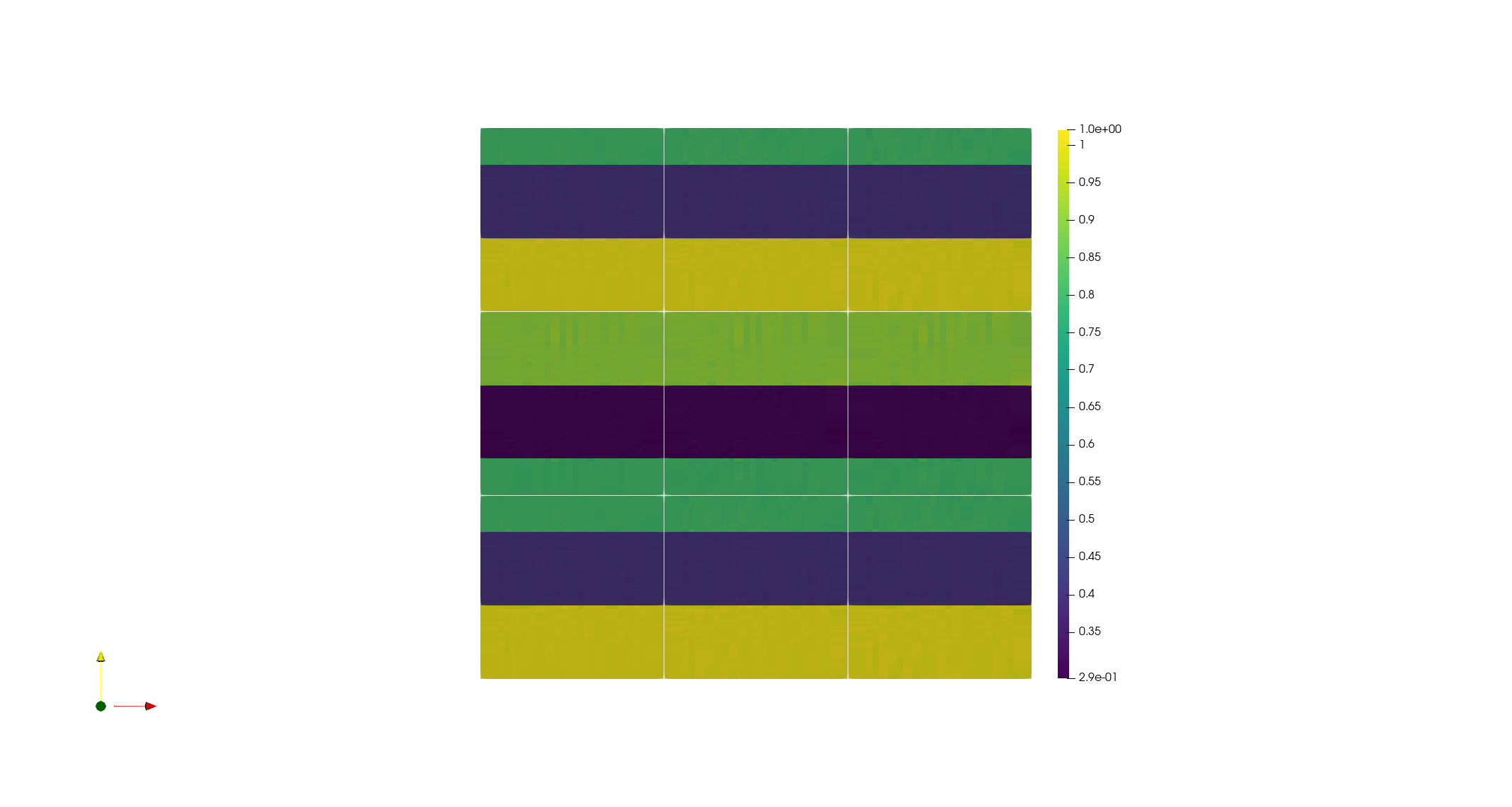}
     \end{subfigure}
     \hfill
     \begin{subfigure}[b]{0.32\textwidth}
         \centering
         \includegraphics[trim={21cm 6cm 20cm 6cm},clip,width=\textwidth]{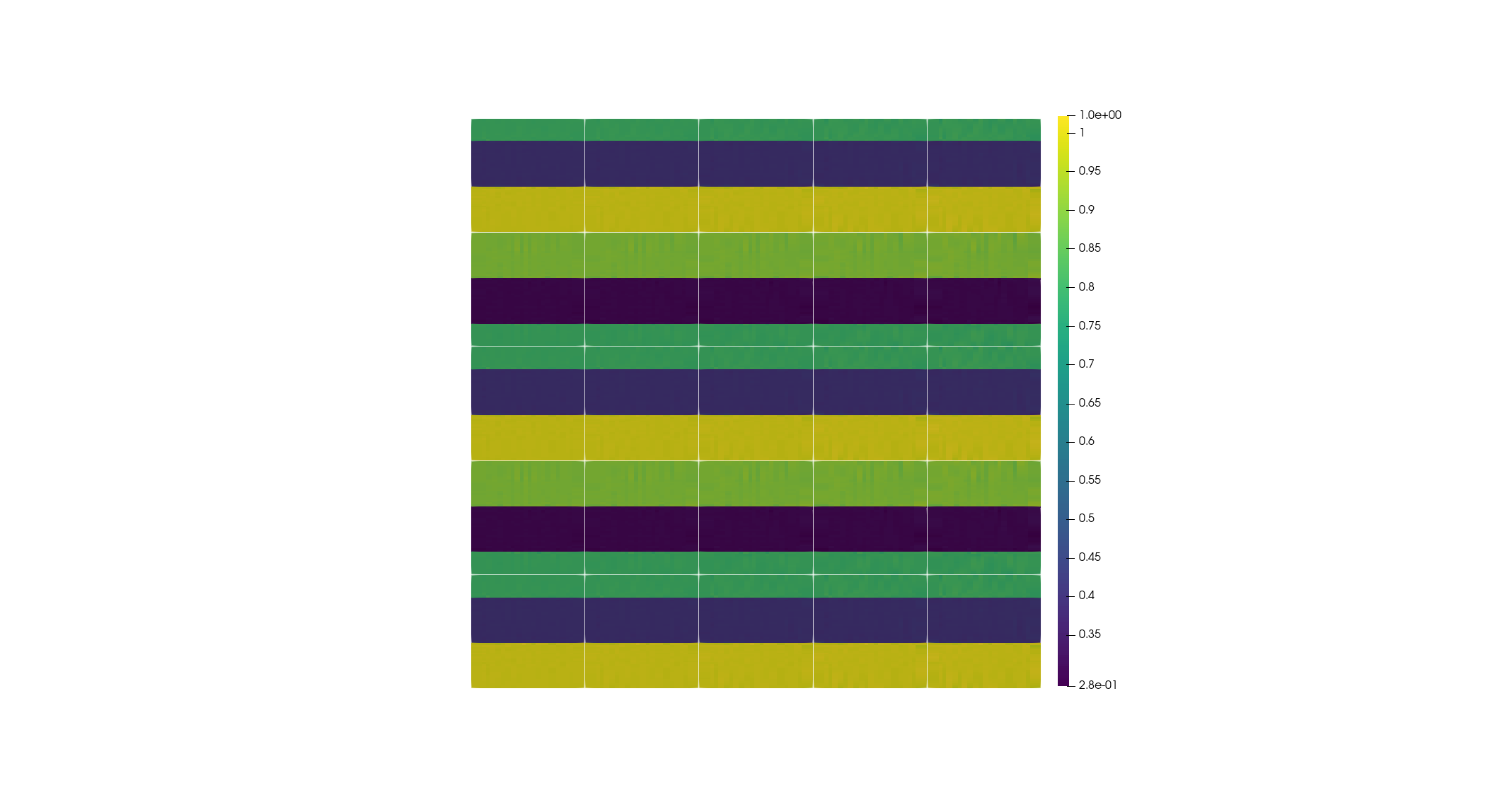}
     \end{subfigure} 
        \caption{
        Figure of the flux in $x$ of the solutions obtained for the stripes problem \cref{sec:stripes-problem} on $[0, 2]^2, [0, 3]^3, [0, 5]^2$ with FEEC elements of $14 \times 14$ fine-scale knots and $H = 1$.  
        We remark that the discontinuities are well-preserved using the FEEC elements even as the domain of the problem is increased. 
        }
        \label{fig:stripes-fx-solution}
\end{figure}

\begin{figure}[ht]
\centering
        \begin{tikzpicture}
        \begin{axis}[scale=.8,
            legend pos=outer north east]

        \addplot[blue, mark=none, thick] table[col sep=comma, each nth point=3, x="arc_length", y="vel_x"] {stripes-data/stripes_data.csv};
        \addlegendentry{Estimated Flux $x$};
        \addplot[black, mark=none, thin] table[x=x, y=y] {stripes-data/stripes_true.data};
        \addlegendentry{True Flux $x$};
        \addplot[green, mark=none, thick] table[col sep=comma, each nth point=3, x="arc_length", y="vel_y"] {stripes-data/stripes_data.csv};
        \addlegendentry{Estimated Flux $y$};
        \addplot[mark=none, thin, red] coordinates {(0, 0) (5, 0)};
        \addlegendentry{True Flux $y$};
        \end{axis}
        \end{tikzpicture}
        \caption{
            Profile on the line  $(2.5, 0)$ to $(2.5, n)$ for $n=5$ of the fluxes of the estimated and true stripes problem \cref{sec:stripes-problem} obtained from the FEEC elements with $14 \times 14$ fine-scale knots and $H=1$ mortar space. 
        }
        \label{fig:stripes-line}
\end{figure}
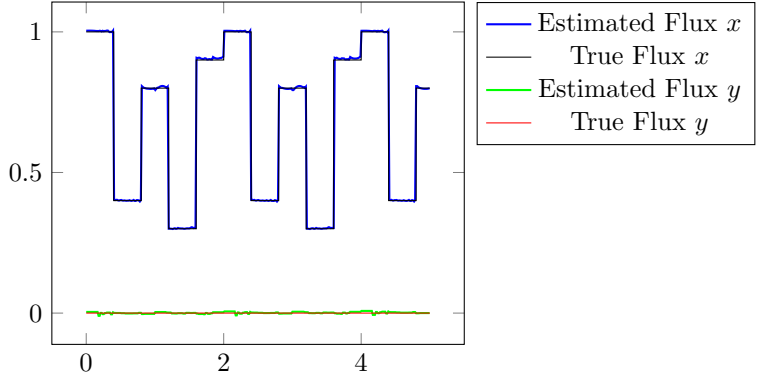

\subsubsection{Path Problem}\label{sec:path-example}
Consider data arising from the problem \cref{eqn:primal-big} on $\Omega = [0, 1] \times [0, 1]$ with $f = 0, g = x$ and 
\begin{align}\label{eqn:path-coef}
    K = \begin{cases}
        \frac{1}{5} \mathbf{I} & x \in \Omega_{\text{path}} \\
        \frac{1}{2} \mathbf{I} & x \in \Omega_{\text{circ}}\\
        \mathbf{I} & x \in \text{elsewhere}
    \end{cases}
\end{align}
where $\Omega_{\text{path}}$ is defined as the region lying in 
\begin{align*}
    R((0, .625), (.375, .875)) \cup R((.375, .125), (.625, .875)) \cup R((.625, .125), (1, .375))
\end{align*}
with $R(p_1, p_2)$ is the rectangle with lower left point $p_1$ and upper right corner $p_2$,
and $\Omega_{\text{circ}}$ are two circles centered at $(.125, .25)$ and $(.875, .75)$ with radius .075. 
See the first column of \cref{fig:subdomain-comparison} for figures of the true solution. 


Let our domain $\Omega$\textbf{} be subdivided into $n^2$ equal squares as our subdomains, and let $H = \frac{1}{4n}$ meaning each subdomain has a total of 16 mortar degrees of freedom.
On each of the subdomains, we train a FEEC element on 20480 uniformly sampled points from the subdomain with 10 fine scale nodes and 14 POUs on the interior and boundary. 
As before, the FEEC elements are trained on 12 total boundary conditions corresponding to the third order Bernstein polynomials on squares.
Rather than refining the mortar discretization relative to the number of subdomains, or increasing the fine-scale nodes on the local solvers, we \emph{strictly increase the number of subdomains in this study.}
We reiterate the fact that as the number of subdomains increases, the number of mortar degrees of freedom per subdomain remains the same at 16 and each FEEC element has the same number of parameters (e.g. 10 fine scale nodes and 14 POUs on the interior).

In \cref{tab:path-error}, we show the average error resulting from increasing the number of subdomains over five different random seeds for training.
We note that while the error in the pressure is already captured quite accurately by a single FEEC element owing to its almost linear nature on the whole domain, the error in the gradient decreases much more dramatically, due to the higher resolution by increasing the number of subdomains.

In \cref{fig:plot-error-path}, we plot the $H^1$ norm errors of both the individual seeds and the mean.
We observe a first-order convergence in the number of subdomains, supporting the notion that our mortar method satisfies requirement {\bf R2} as we increase the number of elements. 
Unfortunately, the error analysis performed in the previous section is not fine enough to show convergence in this case where we increase the number of subdomains due to the usage of crude triangle inequalities.

\begin{table}[ht]
    \centering
    \begin{tabular}{|c|cc|}
        \hline
        Subdomains  & Mean $\norm{p - p_h}_{\Omega}$ & Mean $\norm{\mat u - \mat u_h}_{\Omega}$ \\
        \hline
        $2 \times 2$ & 3.53E-03 $(0.596 \%)$ & 3.47E-02 $(5.21 \%)$\\
        $3 \times 3$ & 3.25E-03 $(0.549 \%)$ & 2.49E-02 $(3.74 \%)$\\
        $4 \times 4$ & 3.13E-03 $(0.528 \%)$ & 1.56E-02 $(2.34 \%)$\\
        $5 \times 5$ & 2.97E-03 $(0.501 \%)$ & 1.33E-02 $(1.99 \%)$\\
        $6 \times 6$ & 2.70E-03 $(0.456 \%)$ & 9.80E-03 $(1.47 \%)$\\
        $8 \times 8$ & 3.28E-03 $(0.554 \%)$ & 6.50E-03 $(0.97 \%)$\\
       \hline
    \end{tabular}
    \caption{Table of average absolute and relative errors for the path problem \cref{sec:path-example} whereby the domain is increasingly subdivided into finer pieces.
    While the pressure does not exhibit convergence, the fluxes converges at a rate of $\mathcal{O}(h)$ and so does the full $H^1$ norm (see \cref{fig:plot-error-path}).}
    \label{tab:path-error}
\end{table}
\begin{figure}[ht]
    \centering
    \begin{tikzpicture}
    \begin{loglogaxis}[scale=.9, xlabel={Number of subdivisions}, ylabel={Absolute $H^1$ error},
    xtick={2, 3, 4, 5, 6, 8},
        xticklabels={2, 3, 4, 5, 6, 8},
        x dir=reverse,
        legend pos=outer north east]
    
    \addplot[only marks, black] table[x=N, y=avgcalc] {data/path.data};
    \addlegendentry{Average $H^1$ error}

    \addplot[only marks, black, opacity=0.4, mark=x, mark size=4] table[x=N, y=seed1] {data/path.data};
    \addplot[only marks, black, opacity=0.4, mark=x, mark size=4] table[x=N, y=seed2] {data/path.data};
    \addplot[only marks, black, opacity=0.4, mark=x, mark size=4] table[x=N, y=seed3] {data/path.data};
    \addplot[only marks, black, opacity=0.4, mark=x, mark size=4] table[x=N, y=seed4] {data/path.data};
    \addplot[only marks, black, opacity=0.4, mark=x, mark size=4] table[x=N, y=seed5] {data/path.data};
    \addlegendentry{Individual $H^1$ error}
    \logLogSlopeTriangle{0.9}{0.2}{0.1}{1}{black};
    
    \end{loglogaxis}
    \end{tikzpicture}
    \caption{Plot of the absolute $H^1$ error resulting from refinement for the path problem \cref{sec:path-example}. 
    We observe a linear convergence rate by dividing the domain into increasingly smaller domains for the FEEC problem.}
    \label{fig:plot-error-path}
\end{figure}
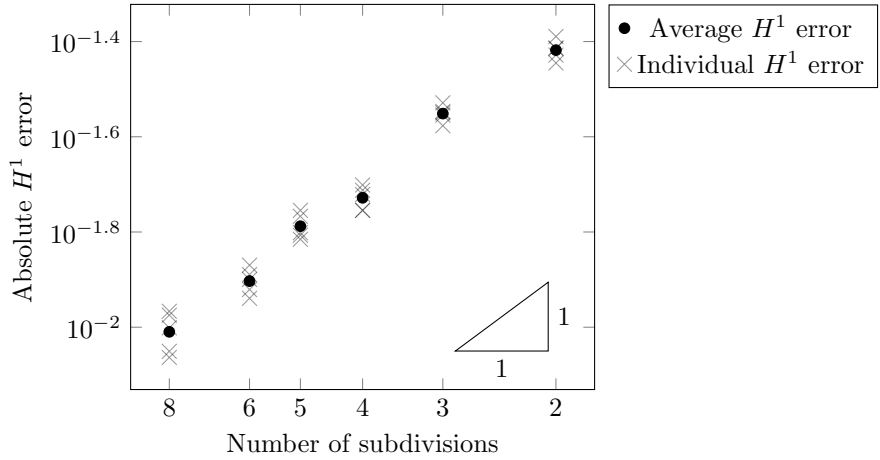

\begin{figure}[ht]
     \centering
     \begin{subfigure}[b]{0.32\textwidth}
         \centering
         \includegraphics[trim={5cm 2cm 2cm 1.5cm},clip, width=\textwidth]{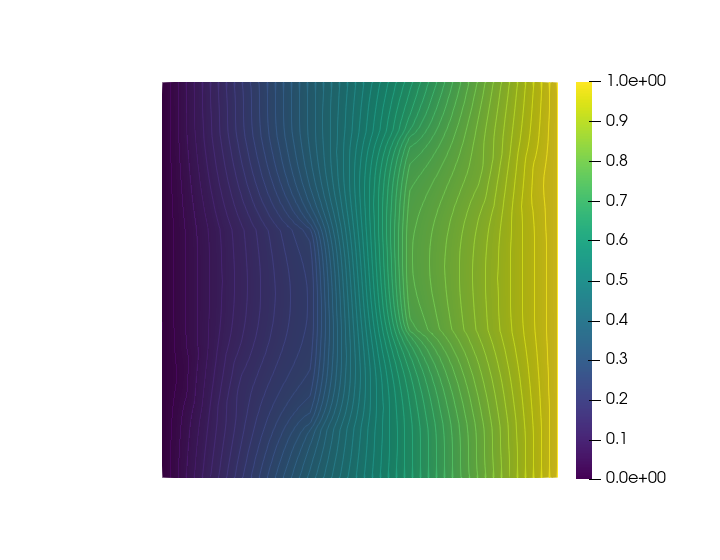}
     \end{subfigure}
     \hfill
     \begin{subfigure}[b]{0.32\textwidth}
         \centering
         \includegraphics[trim={5cm 2cm 2cm 1.5cm},clip,width=\textwidth]{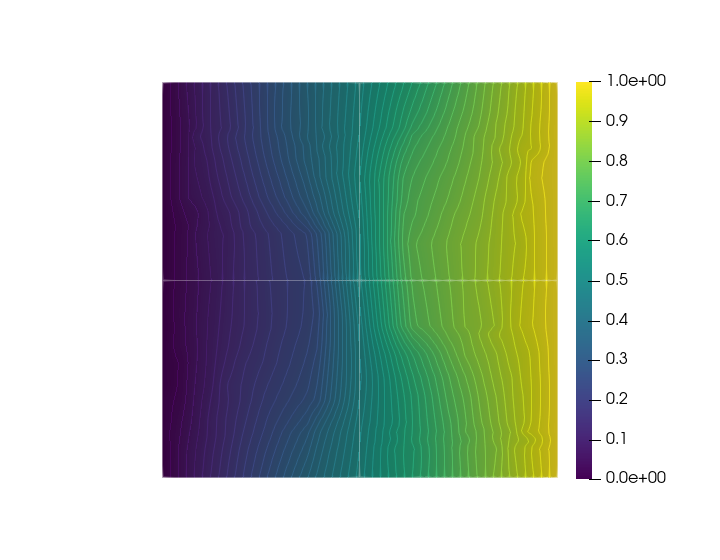}
     \end{subfigure}
     \hfill
     \begin{subfigure}[b]{0.32\textwidth}
         \centering
         \includegraphics[trim={5cm 2cm 2cm 1.5cm},clip,width=\textwidth]{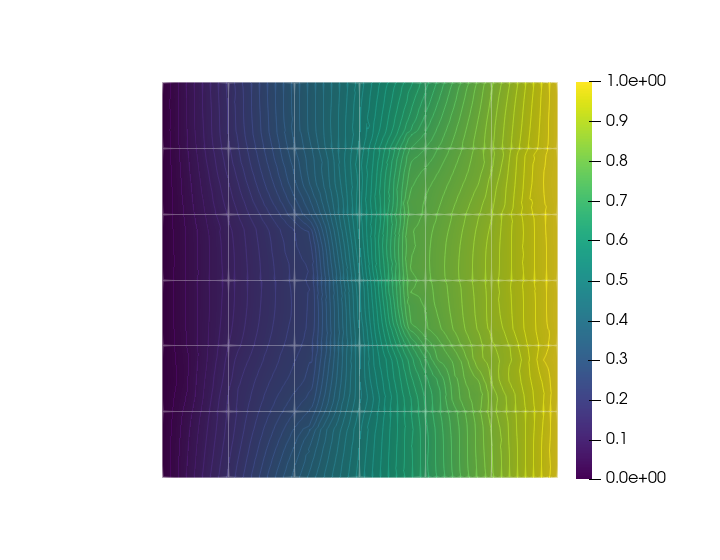}
     \end{subfigure} \\
          \begin{subfigure}[b]{0.32\textwidth}
         \centering
         \includegraphics[trim={5cm 2cm 2cm 1.5cm},clip, width=\textwidth]{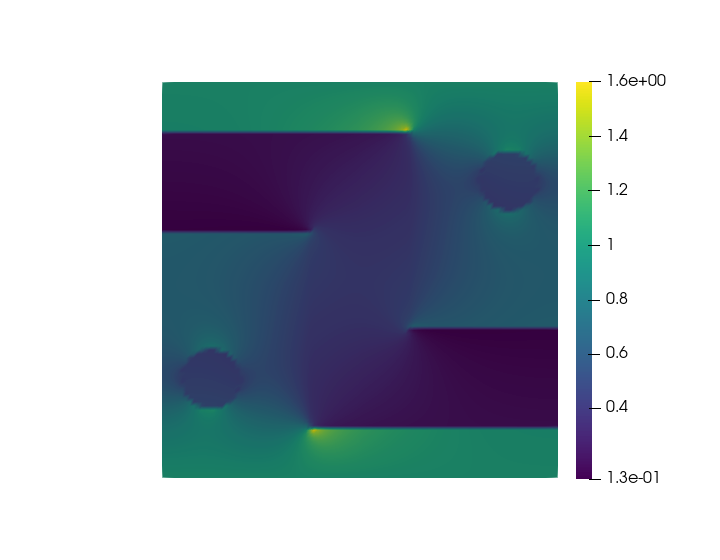}
     \end{subfigure}
     \hfill
     \begin{subfigure}[b]{0.32\textwidth}
         \centering
         \includegraphics[trim={5cm 2cm 2cm 1.5cm},clip,width=\textwidth]{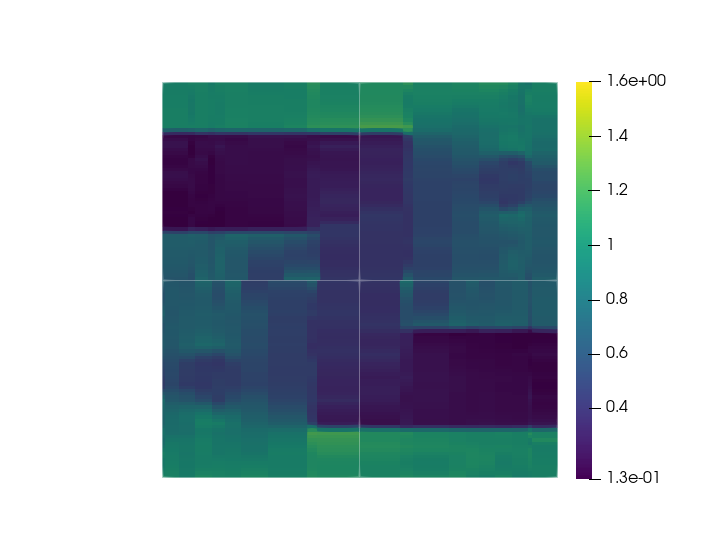}
     \end{subfigure}
     \hfill
     \begin{subfigure}[b]{0.32\textwidth}
         \centering
         \includegraphics[trim={5cm 2cm 2cm 1.5cm},clip,width=\textwidth]{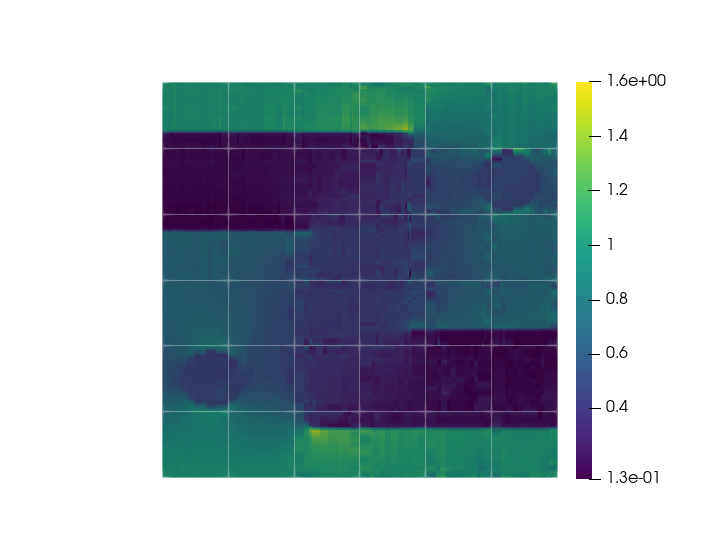}
     \end{subfigure} \\
               \begin{subfigure}[b]{0.32\textwidth}
         \centering
         \includegraphics[trim={5cm 2cm 2cm 1.5cm},clip, width=\textwidth]{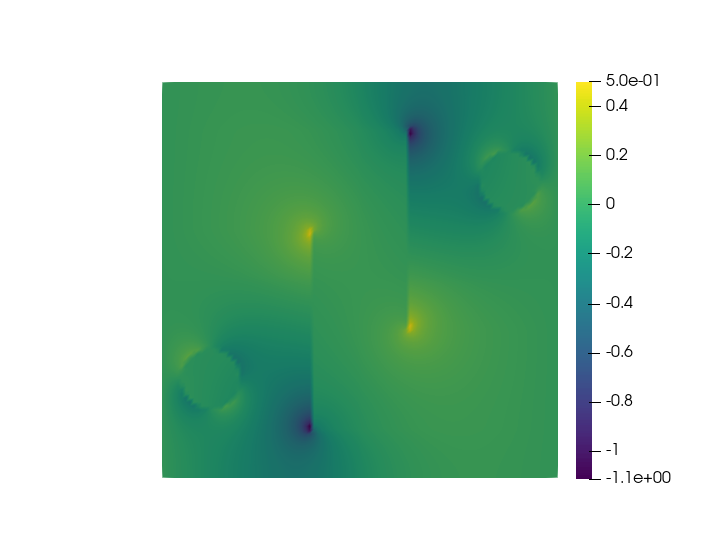}
     \end{subfigure}
     \hfill
     \begin{subfigure}[b]{0.32\textwidth}
         \centering
         \includegraphics[trim={5cm 2cm 2cm 1.5cm},clip,width=\textwidth]{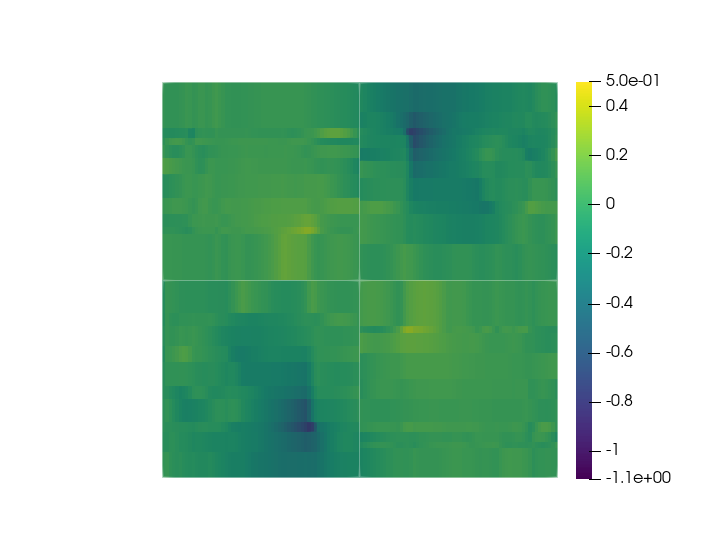}
     \end{subfigure}
     \hfill
     \begin{subfigure}[b]{0.32\textwidth}
         \centering
         \includegraphics[trim={5cm 2cm 2cm 1.5cm},clip,width=\textwidth]{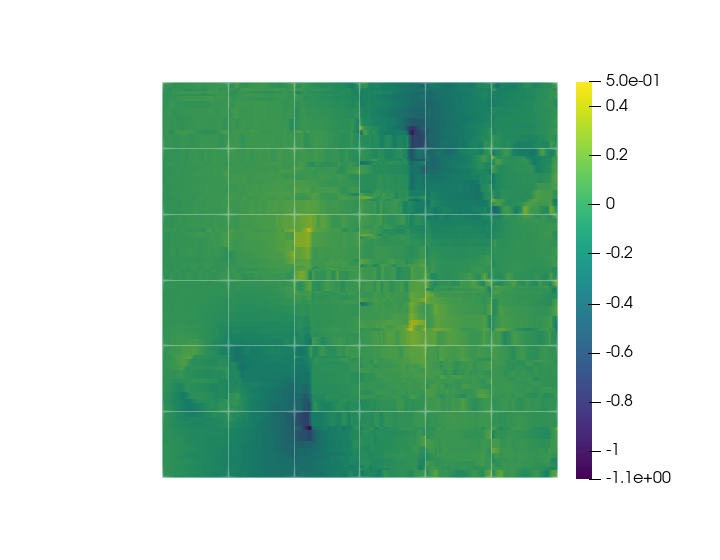}
     \end{subfigure} 
        \caption{
            Plot of the true solution (first column), and subdomain with 2 (second column) and 6 (third column) refinements for the path problem \cref{sec:path-example}. 
            Note that the features are increasingly more refined and matches the true solutions as the number of subdomains are increased.
        }
        \label{fig:subdomain-comparison}
\end{figure}

\subsubsection{Battery Problem: Single solution training}\label{sec:battery-example}
We now consider data from the problem \cref{eqn:primal-big} on $\Omega = [0, 1] \times [0, 1]$ with $f = 0$ and a nontrivial material data and boundary condition corresponding to a voltage difference across a lithium-ion battery.
The true pressure and fluxes, which are sampled at 5.89 million points, are provided via a high-fidelity solver SIERRA/ARIA \cite{notz2016sierra} and will be treated as the \emph{only} source of provided data with no additional methods of augmentation. 
In other words, we assume a full simulation of the response for the subdomains to arbitrary mortars is not available, meaning the local FEEC elements will have to extrapolate the correct Dirichlet-to-Neumann maps. 
For a figure of the true pressure and flux, see \cref{fig:battery-comparison}.
More details regarding the data can be found in appendix B of \cite{actoradata}; note that for simplicity, we consider the problem as a purely Dirichlet boundary condition problem whilst \cite{actoradata} included Neumann boundary conditions. 

We again split the domain $[0, 1]^2$ into $n^2$ uniform squares, but only employ four mortar degrees of freedom per subdomain with $H = \frac 1 n$ (i.e. the mortar degrees of freedom lie on the corners of the subdomain).\footnote{The coarsest mortar mesh is chosen since the fine scale nodes may move substantially, due to only one training set, and violate assumption \cref{eqn:injective-assumption}.}
A FEEC element with 12 fine scale nodes in both the $x$ and $y$ direction, and 12 POUs on the interior and boundary are used on each subdomain.

Since only a single reference solution is provided, we train the FEEC element with boundary condition obtained from interpolating the given solution and the data given (e.g. $g_i = p|_{\partial \Omega_i}$).
For example, suppose $n=2$, then the FEEC element on subdomain corresponding to $\Omega' = [0, .5]^2$ will have $\frac{5.89}{4} \approx 1.5$ million data points, and boundary conditions corresponding to the nearest neighbor interpolation of those points on $\partial \Omega'$. 
This is unlike \cref{sec:path-example} or even \cref{sec:stripes-problem} where each FEEC element was provided with multiple examples to train on. 
Note that the number of training data points per FEEC element decrease as we increase the number of subdomains, we have found that it can lead to some instability in pretraining.

In \cref{tab:battery-error}, we show the absolute MSE of the $L^2$ and the $H^1$ semi-norm resulting from solving the Darcy's flow equation with the trained FEEC elements. 
In the case of $2 \times 2$ refinement, the error is quite large since the mortar only has one degree of freedom in the interior (cf. \cref{fig:mesh-example1}) and the boundary conditions are not even well-resolved; however, it's clear that as additional refinements are made that the relative error decreases. 
In addition, we also show the absolute MSE of the ``true mortar'' (TM) which is obtained by setting the mortar degrees of freedom to be the interpolant from the data set.
This ``true mortar'' indicates how much of the error is due to the training procedure as no actual solves of the bilinear form is performed and allows us to see how much error arises from the actual mortar coupling.
Since this true mortar errors are similar to the errors obtained from solving the bilinear form, this suggests that very little error arises due to the mortar coupling.
In \cref{fig:plot-error-battery}, we observe that the error obtained from solving the Darcy flow equation decreases as we increase the number of subdomains, with the finest level obtaining a better $H^1$ error than the errors obtained in \cite{actoradata}.

\begin{table}[ht]
    \centering
    \begin{tabular}{|c|cc|cc|}
        \hline
        Subdomains  & $L^2(\Omega)$ & $H^1$-seminorm &  TM $L^2(\Omega)$ & TM $H^1$-seminorm\\
        \hline
        $2 \times 2$ & 7.15E-03 ($1.27 \%$)& 1.22E+00 ($67.6\%$) & 5.37E-03 & 1.39E+00\\
        $3 \times 3$ & 3.01E-03	($0.54 \%$)& 6.45E-01 ($35.7\%$) & 2.76E-03 & 6.03E-01\\
        $4 \times 4$ & 2.71E-03	($0.48 \%$)& 4.57E-01 ($25.3\%$) & 2.42E-03 & 3.27E-01\\
        $6 \times 6$ & 2.46E-03	($0.44\%$)& 2.44E-01 ($13.5\%$) & 1.67E-03 & 1.37E-01 \\
        $8 \times 8$ & 2.40E-03	($0.43\%$)& 1.43E-01 ($7.92\%$) & 1.41E-03 & 1.19E-01\\
       \hline
    \end{tabular}
    \caption{Table of absolute and relative errors for the battery problem \cref{sec:battery-example}. 
    The right ``true mortar'' (TM) columns essentially capture the training error by simply fixing the mortar space to the true values, while the left columns result from actually solving the Darcy's flow equations. 
    Similar to the \cref{sec:path-example}, the error in pressure only decreases slightly with most of the benefits arising from the convergence in the $H^1$-seminorm. 
    }
    \label{tab:battery-error}
\end{table}

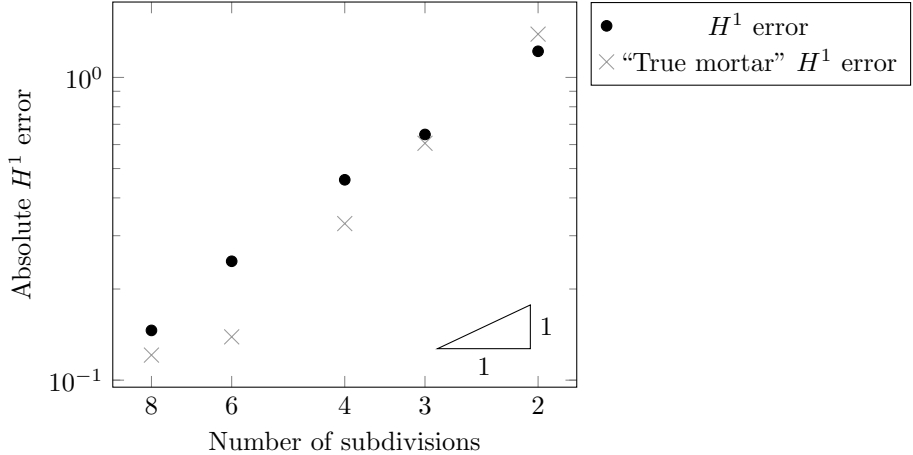
\begin{figure}
    \centering
    \begin{tikzpicture}
    \begin{loglogaxis}[scale=.9, xlabel={Number of subdivisions}, ylabel={Absolute $H^1$ error},
    xtick={2, 3, 4, 6, 8},
        xticklabels={2, 3, 4, 6, 8},
        x dir=reverse,
        legend pos=outer north east]
    
    \addplot[only marks, black] table[x=N, y=est] {data/battery.data};
    \addlegendentry{$H^1$ error}

    \addplot[only marks, black, opacity=0.4, mark=x, mark size=4] table[x=N, y=true] {data/battery.data};
    \addlegendentry{``True mortar'' $H^1$ error}
    \logLogSlopeTriangle{0.9}{0.2}{0.1}{1}{black};
    
    \end{loglogaxis}
    \end{tikzpicture}
    \caption{Plot of the $H^1$ error and true mortar error resulting from refinement for the battery problem \cref{sec:battery-example}.
    For an explanation of what the true mortar error is, we refer the reader to the corresponding discussion \cref{sec:battery-example}.
    We note that the error is quite close to the true mortar error, meaning that the coarse mortar space does not negatively effect the convergence that much.}
    \label{fig:plot-error-battery}
\end{figure}

\begin{figure}[ht]
     \centering
     \begin{subfigure}[b]{0.32\textwidth}
         \centering
         \includegraphics[trim={4.5cm 1.5cm 0.5cm 1cm},clip, width=\textwidth]{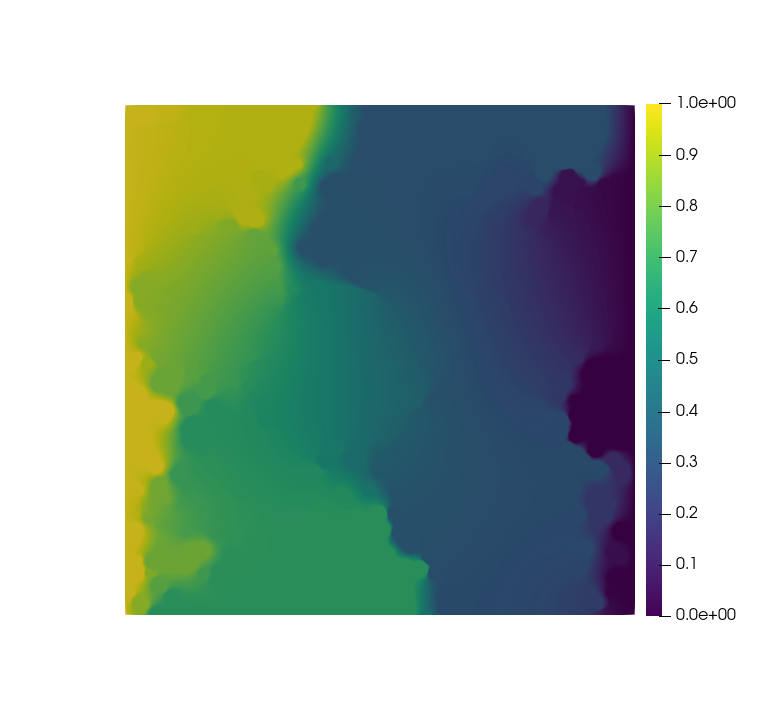}
     \end{subfigure}
     \hfill
     \begin{subfigure}[b]{0.32\textwidth}
         \centering
         \includegraphics[trim={4.5cm 1.5cm 0.5cm 1cm},clip,width=\textwidth]{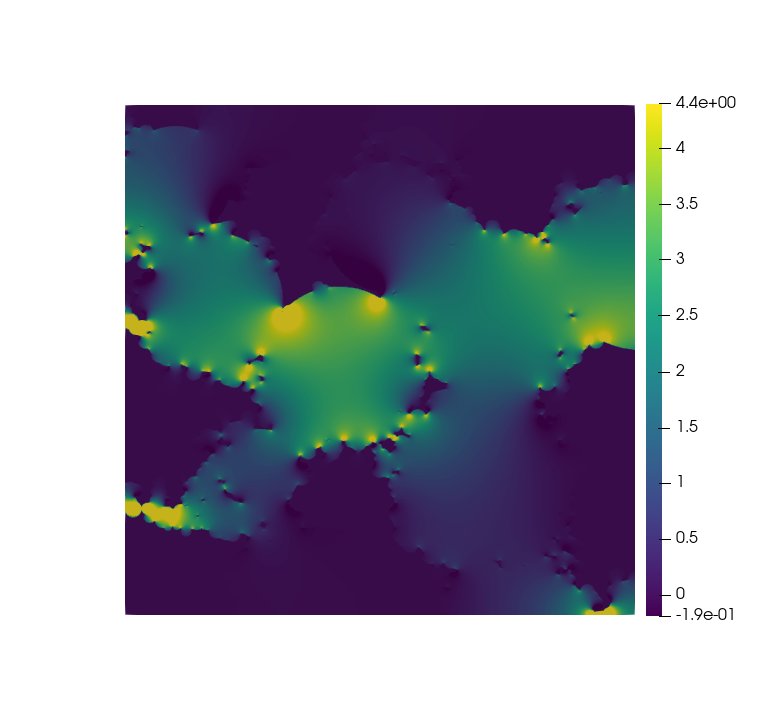}
     \end{subfigure}
     \hfill
     \begin{subfigure}[b]{0.32\textwidth}
         \centering
         \includegraphics[trim={4.5cm 1.5cm 0.5cm 1cm},clip,width=\textwidth]{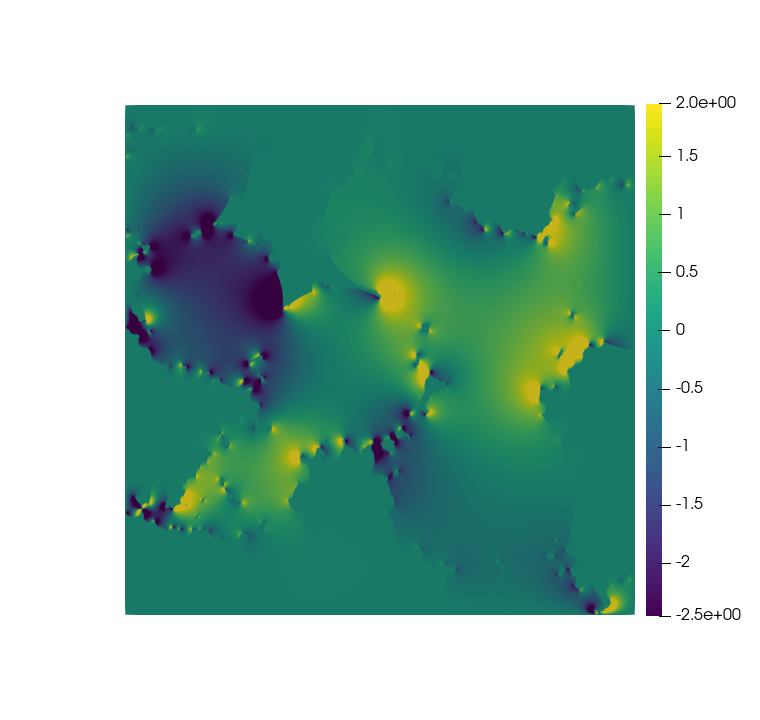}
     \end{subfigure} \\
     \begin{subfigure}[b]{0.32\textwidth}
         \centering
         \includegraphics[trim={4.5cm 1.5cm 0.5cm 1cm},clip, width=\textwidth]{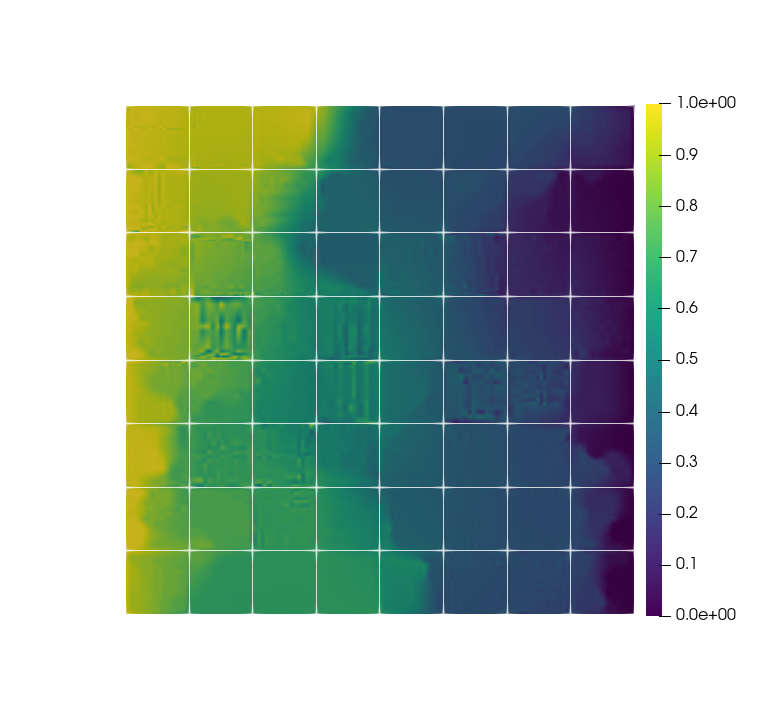}
     \end{subfigure}
     \hfill
     \begin{subfigure}[b]{0.32\textwidth}
         \centering
         \includegraphics[trim={4.5cm 1.5cm 0.5cm 1cm},clip,width=\textwidth]{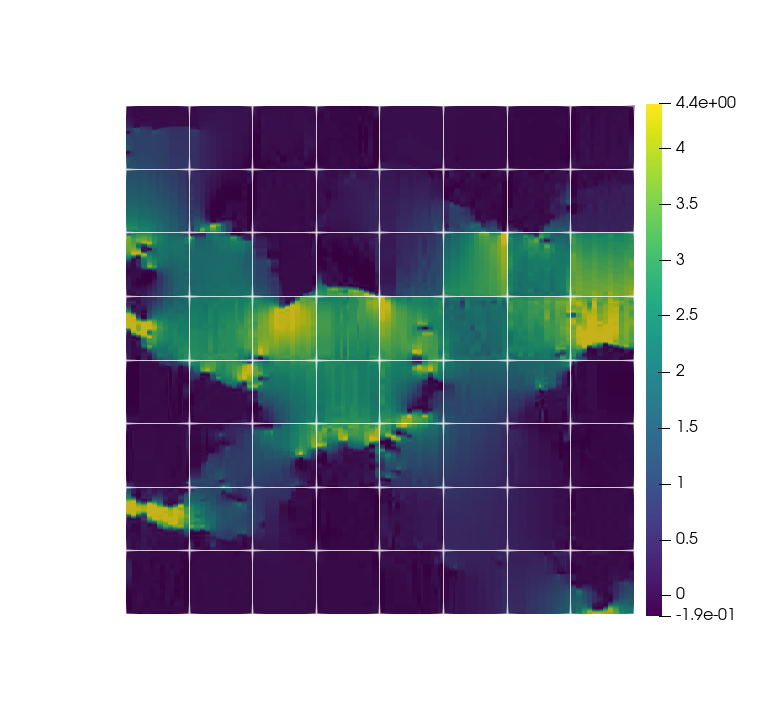}
     \end{subfigure}
     \hfill
     \begin{subfigure}[b]{0.32\textwidth}
         \centering
         \includegraphics[trim={4.5cm 1.5cm 0.5cm 1cm},clip,width=\textwidth]{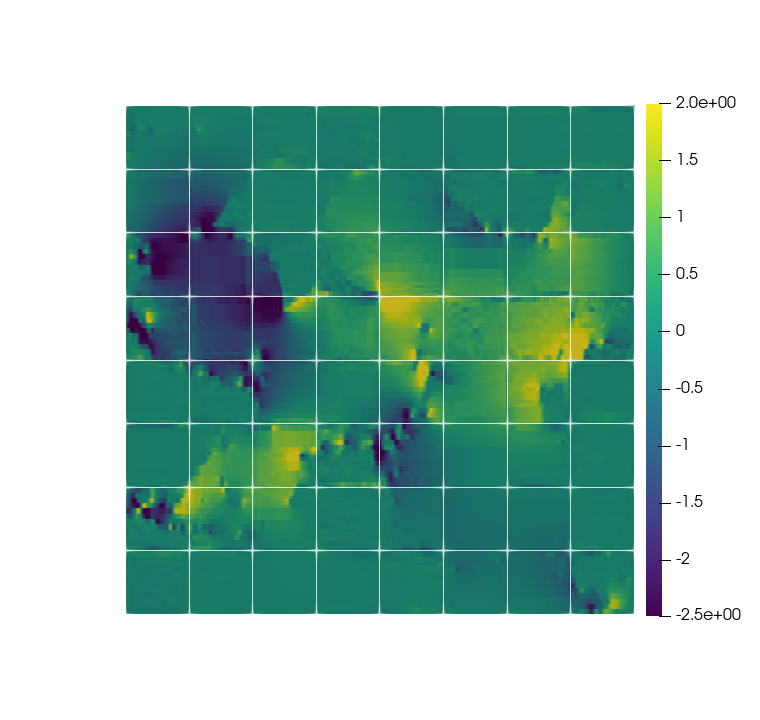}
     \end{subfigure} 
     \caption{Figures of the given pressure/fluxes for the battery problem \cref{sec:battery-example} in the first row, and the approximation obtained from solving the Darcy's flow problem in the second row for $8 \times 8$ subdivisions. 
     Overall, the estimated solution matches the data fairly accurately with many most small details captured.}
    \label{fig:battery-comparison}
\end{figure}

\section{Appendix}\label{sec:appendix}
\subsection{Technical Proofs}
\begin{proof}[Proof of \cref{lem:continuous-coer}]
 By \cref{eqn:decomposition}, for any $\mu \in H_0^\gamma(\Omega)$, we can decompose it as 
 \begin{align*}
 	\mu = p^*(\mu) + \sum_{i=1}^n p_{i0}
 \end{align*}
 where $p^*(\mu) \in H^1_0(\Omega)$ satisfying \cref{eqn:star} (hence $p^*(\mu)|_{\Gamma} = \mu|_{\Gamma}$) and $p_{i0} \in H_0^1(\Omega_i)$ are bubble functions. 

 Thus,  
\begin{align}\label{eqn:b-useful-identity}
\begin{split}
	 	b(\lambda, \mu) &= \sum_{i=1}^n \left(\mat u^*(\lambda), \nabla  p^*(\mu) + \sum_{i=1}^n \nabla p_{i0}\right)_{\Omega_i} \\
 	&= \sum_{i=1}^n \left(\mat u^*(\lambda), \nabla p^*(\mu) \right)_{\Omega_i} = \sum_{i=1}^n \left(K \nabla p^*(\lambda), \nabla p^*(\mu) \right)_{\Omega_i}
\end{split}
 \end{align}
 since \cref{eqn:star} implies the inner products of $\mat u^*(\lambda)$ with the gradient of bubble functions are zero. 
From the above, the bilinear form is clearly symmetric and positive definite.

For coercivity, using \cref{eqn:b-useful-identity}, Poincare inequality and trace inequality \cite{evans2022partial}, 
\begin{align*}
 	b(\lambda, \lambda) &\ge \norm{\nabla p^*(\lambda)}^2 
	\ge \frac{1}{C}\norm{p^*(\lambda)}_{H^1(\Omega)}^2 = \frac{1}{C} \sum_{i=1}^n \norm{p^*(\lambda)}_{H^1(\Omega_i)}^2 
 \ge \frac{1}{C} \sum_{i=1}^n \norm{\lambda}_{H^{1/2}(\Gamma_i)}^2. 
\end{align*}
meaning $b(\lambda, \lambda) \ge \alpha\sum_{i=1}^n\norm{\lambda}_{H^{1/2}(\Gamma_i)}^2 \sim \alpha\norm{\lambda}_{H^\gamma}^2$ for some constant $\alpha$ independent of $\lambda$. 
\end{proof}

The remaining proofs are for the coercivity and the error estimate for the discrete mortar.
We introduce the shorthand notation $p_h^*(Q\lambda_H) := \sum_{i=1}^n p^*_h(Q_i\lambda_H)$, and let $\norm{\cdot}_\Omega$ denote the $L^2$ norm over the domain $\Omega$ unless otherwise stated. 
Before proceeding, we define the inclusion map $P_i: \Lambda_H \subset \Lambda \to H^\gamma|_{\Omega_i}$ through the isomorphism. 
We need a preparatory lemma: 
\begin{lemma}\label{lem:helper}
Let $\delta := \frac{2}{C_p + 1}$, where $C_p$ is the constant arising from Corollary 6.3 of \cite{brenner2003poincare}, then
\begin{align*}
	\frac{\delta}{2}\norm{p_h^*(Q \lambda_H)}_{H^1(\Omega)}^2 &\le 
	\left[\norm{\nabla p^*_h(Q\lambda_H)}_{\Omega}^2 + \frac{C_p}{C_p + 1}\sum_{\Gamma_{ij}} \frac{1}{\abs{\Gamma_{ij}}} \norm{Q_i\lambda_H - Q_j \lambda_H}^2_{L^2(\Gamma_{ij})} \right].
\end{align*}
\end{lemma}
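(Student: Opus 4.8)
The plan is to derive the bound from Brenner's Poincar\'e--Friedrichs inequality for piecewise $H^1$ functions \cite{brenner2003poincare}, applied to the glued function $v := p_h^*(Q\lambda_H) = \sum_{i=1}^n p_h^*(Q_i\lambda_H)$, regarded as the (in general discontinuous) function equal to $p_h^*(Q_i\lambda_H)$ on each $\Omega_i$, and with $\norm{\cdot}_{H^1(\Omega)}$ understood as the broken norm $\norm{v}_{H^1(\Omega)}^2 = \sum_i \norm{v}_{H^1(\Omega_i)}^2$. Two observations make the application transparent. First, by the Dirichlet condition accompanying \cref{eqn:star-h}, namely $p_h^*(Q_i\lambda_H) = Q_i\lambda_H$ on $\Gamma_i$, the jump of $v$ across any shared interface $\Gamma_{ij}$ equals $Q_i\lambda_H - Q_j\lambda_H$, while on $\partial\Omega$ the trace of $v$ is $Q_i\lambda_H|_{\partial\Omega\cap\Gamma_i}$, which vanishes for $\lambda_H \in \Lambda_{H,0}$ (the case needed in \cref{lem:dis-stability}), since then the boundary projection in \cref{eqn:projection-h12} preserves the homogeneous data. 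Thus $v$ is a piecewise $H^1$ function with homogeneous trace on $\partial\Omega$ whose interface jumps are exactly the mortar-projection differences appearing on the right-hand side of the lemma.

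With this, Brenner's inequality in its homogeneous-boundary form gives a constant $C_p$ for which
\begin{align*}
    \norm{p_h^*(Q\lambda_H)}_{\Omega}^2 \;\le\; C_p\left(\norm{\nabla p_h^*(Q\lambda_H)}_{\Omega}^2 + \sum_{\Gamma_{ij}}\frac{1}{\abs{\Gamma_{ij}}}\,\norm{Q_i\lambda_H - Q_j\lambda_H}_{L^2(\Gamma_{ij})}^2\right),
\end{align*}
where Brenner's jump functional $\abs{\Gamma_{ij}}^{-1}(\int_{\Gamma_{ij}}[v])^2$ has been bounded by $\abs{\Gamma_{ij}}^{-1}\norm{[v]}_{L^2(\Gamma_{ij})}^2$ via Cauchy--Schwarz. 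Abbreviating $A := \norm{\nabla p_h^*(Q\lambda_H)}_{\Omega}^2$ and $B := \sum_{\Gamma_{ij}}\abs{\Gamma_{ij}}^{-1}\norm{Q_i\lambda_H - Q_j\lambda_H}_{L^2(\Gamma_{ij})}^2$, the above reads $\norm{v}_{\Omega}^2 \le C_p(A+B)$, so that
\begin{align*}
    \norm{p_h^*(Q\lambda_H)}_{H^1(\Omega)}^2 = \norm{v}_{\Omega}^2 + A \;\le\; (C_p+1)\,A + C_p\,B.
\end{align*}
Dividing by $C_p+1$ and using $\delta/2 = (C_p+1)^{-1}$, which follows from $\delta = 2/(C_p+1)$, gives
\begin{align*}
    \frac{\delta}{2}\,\norm{p_h^*(Q\lambda_H)}_{H^1(\Omega)}^2 \;\le\; A + \frac{C_p}{C_p+1}\,B,
\end{align*}
which is precisely the claim.

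The only substantive point is the verification that Brenner's hypotheses genuinely apply here and that the constant is the one asserted in the statement. The piecewise regularity is immediate, since $v$ is a finite element function on each $\Omega_i$, and the jump identification follows from the defining boundary condition of $p_h^*$; the delicate parts are confirming that we may invoke the homogeneous-trace version of the inequality rather than the mean-value version --- equivalently, that $Q_i\lambda_H$ inherits zero data on $\partial\Omega\cap\Gamma_i$, which is where the structure of \cref{eqn:projection-h12} must be used --- and checking that $C_p$ depends only on the aspect ratios of the $\Omega_i$ and not on their number, so that the constant propagates unchanged through the coercivity argument of \cref{lem:dis-stability}. Both are standard for the comparable-aspect-ratio partitions assumed in the paper; the remaining algebra is bookkeeping, and the choice $\delta = 2/(C_p+1)$ is tuned exactly so that the final rearrangement closes without slack.
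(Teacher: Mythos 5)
Your proof is correct and follows essentially the same route as the paper's: both invoke Corollary 6.3 of Brenner, bound the mean-jump functional by the $L^2$ norm of the jump via Cauchy--Schwarz, and close with the same algebra using $\delta/2 = 1/(C_p+1)$ (your step of bounding $\norm{v}_{\Omega}^2$ and then adding $A$ is algebraically identical to the paper's $(1+1/C_p)$ coefficient on the gradient term). Your additional verification that the interface jumps of the glued function are exactly $Q_i\lambda_H - Q_j\lambda_H$ is a useful detail the paper leaves implicit.
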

\begin{proof}
By a simple application of Corollary 6.3 of \cite{brenner2003poincare}: 
	\begin{align*}
	\norm{p_h^*(Q \lambda_H)}_{H^1(\Omega)}^2 &\le C_p\left[(1 + \frac{1}{C_p})\norm{\nabla p^*_h(Q\lambda_H)}_{\Omega}^2 + \sum_{\Gamma_{ij}} \frac{1}{\abs{\Gamma_{ij}}^2} \left(\int_{\Gamma_{ij}} Q_i\lambda_H - Q_j \lambda_H \, ds \right)^2 \right] \\
	 &\le C_p\left[(1 + \frac{1}{C_p})\norm{\nabla p^*_h(Q\lambda_H)}_{\Omega}^2 + \sum_{\Gamma_{ij}} \frac{1}{\abs{\Gamma_{ij}}} \norm{Q_i\lambda_H - Q_j \lambda_H}^2_{L^2(\Gamma_{ij})} \right]
\end{align*}
where we used Cauchy-Scwharz inequality on $(\int_\sigma f)^2 \le \abs{\sigma}\norm{f}^2$.
\end{proof}

\begin{proof}[Proof of \cref{lem:dis-stability}]
    An identity like \cref{eqn:b-useful-identity} can also be verified for the discrete version as well since on any subdomain $i$ and $\mu_H \in \Lambda_H$, $Q_i\mu_H = p_h^*(Q_i\mu_H) + p_{hi}$ where $p_{hi} \in W_{hi, 0}$ bubble functions, one has
    \begin{align}\label{eqn:b-useful-identity-disc}
\begin{split}
	 	b_h(\lambda_H, \mu_H) &= \sum_{i=1}^n \left(\mat u_h^*(Q_i\lambda_H), \nabla  p_h^*(Q_i\mu_H) + \nabla p_{hi}\right)_{\Omega_i} \\
 	&= \sum_{i=1}^n \left(\mat u_h^*(Q_i\lambda_H), \nabla p_h^*(Q_i\mu_H) \right)_{\Omega_i} = \sum_{i=1}^n \left(K \nabla p_h^*(Q_i\lambda_H), \nabla p_h^*(Q_i\mu_H) \right)_{\Omega_i}.
\end{split}
 \end{align}
Thus, the bilinear form $b_h$ is symmetric, and, at least, positive semi-definite. 
Coercivity requires a bit more work. 

Since for each subdomain $i$, $p^*_h(Q_i\lambda_H)|_{\Gamma_i} = Q_i\lambda_H|_{\Gamma_i}$, we add by zero and expand to obtain
\begin{align*}
	b_h( \lambda_H,  \lambda_H) 
	&= \sum_{i=1}^n \left(K\nabla p_h^*(Q_i\lambda_H), \nabla p_h^*(Q_i\lambda_H) \right)_{\Omega_i} + \delta \ipbc{Q_i \lambda_H - p_h^*(Q_i \lambda_H)}{Q_i \lambda_H}_{H^{1/2}(\Gamma_i)} \\
	&\ge \sum_{i=1}^n\norm{\nabla p_h^*(Q_i\lambda_H)}^2_{\Omega_i} + \delta \norm{Q_i\lambda_H}_{H^{1/2}(\Gamma_i)}^2 - \frac{\delta}{2} \norm{p_h^*(Q_i\lambda_H)}_{H^{1/2}(\Gamma_i)}^2 - \frac{\delta}{2} \norm{Q_i\lambda_H}_{H^{1/2}(\Gamma_i)}^2 \\
	&\ge \sum_{i=1}^n\norm{\nabla p_h^*(Q_i\lambda_H)}^2_{\Omega_i} + \frac{\delta}{2} \norm{Q_i\lambda_H}_{H^{1/2}(\Gamma_i)}^2 - \frac{\delta}{2} \norm{p_h^*(Q_i\lambda_H)}_{H^{1}(\Omega_i)}^2 \\
	&= \norm{\nabla p_h^*(Q\lambda_H)}^2_{\Omega} - \frac{\delta}{2} \norm{p_h^*(Q\lambda_H)}_{H^{1}(\Omega)}^2+ \sum_{\Gamma_{ij} }\frac{\delta}{2} (\norm{Q_i \lambda_H}_{H^{1/2}(\Gamma_{ij})}^2+ \norm{Q_j \lambda_H}_{H^{1/2}(\Gamma_{ij})}^2)
\end{align*}
by using Cauchy-Schwarz, the trace inequality, and the trivial inequality $ab \le 2a^2 + 2b^2$. 

Now, using the assumption \cref{eqn:jump-assumption}
\begin{align*}
	b_h(\lambda_H, \lambda_H) 
	&\ge \norm{\nabla p_h^*(Q\lambda_H)}^2 - \frac{\delta}{2} \norm{p_h^*(Q\lambda_H)}_{H^{1}(\Omega)}^2+ \sum_{\Gamma_{ij}} \frac{\delta C_p}{2\abs{\Gamma_{ij}}} \norm{Q_i \lambda_H - Q_j \lambda_H}_{L^2(\Gamma_{ij})}^2 + \\
	&\qquad \sum_{\Gamma_{ij} }\frac{\delta}{4} (\norm{Q_i \lambda_H}_{H^{1/2}(\Gamma_{ij})}^2+ \norm{Q_j \lambda_H}_{H^{1/2}(\Gamma_{ij})}^2) \\
	&= \norm{\nabla p_h^*(Q\lambda_H)}^2 - \frac{\delta}{2} \norm{p_h^*(Q\lambda_H)}_{H^{1}(\Omega)}^2+ \sum_{\Gamma_{ij}} \frac{ C_p}{(C_p + 1)\abs{\Gamma_{ij}}} \norm{Q_i \lambda_H - Q_j \lambda_H}_{L^2(\Gamma_{ij})}^2 + \\
	&\qquad \sum_{\Gamma_{ij} }\frac{\delta}{4} (\norm{Q_i \lambda_H}_{H^{1/2}(\Gamma_{ij})}^2+ \norm{Q_j \lambda_H}_{H^{1/2}(\Gamma_{ij})}^2) 
\end{align*}
Then, by \cref{lem:helper} and the assumption \cref{eqn:injective-assumption}
\begin{align*}
	b_h(\lambda_H, \lambda_H) &\ge \sum_{\Gamma_{ij} }\frac{\delta}{4} (\norm{Q_i \lambda_H}_{H^{1/2}(\Gamma_{ij})}^2+ \norm{Q_j \lambda_H}_{H^{1/2}(\Gamma_{ij})}^2) \\
	&\ge \sum_{i=1}^n \frac{\delta}{4} \norm{Q_{i} \lambda_H}^2_{H^{1/2}(\Gamma_i)} \ge \frac{\delta}{4} \sum_{i=1}^n \norm{\lambda_H}_{H^{1/2}(\Gamma_i)}^2.
\end{align*}

\end{proof}

For the sake of notation, we assume that $\norm{\cdot}_{H^{1/2}}$ denote the sum of the $H^{1/2}$ norms over all the $\Gamma_i$ unless otherwise denoted: 
\begin{proof}[Proof of \cref{thm:err-est}]
    By Strang's second lemma, there exists a constant $C$ such that 
    \begin{align*}
        \norm{ \lambda^* -  \lambda^*_h}_{H^{1/2}} &\le C\biggl(\inf_{\mu_H \in \Lambda_0} \norm{\lambda^* - \mu_H}_{H^{1/2}} + \sup_{\mu_H \in \Lambda_0} \frac{\abs{b_h(\lambda^*, \mu_H) - L_h(\mu_H)}}{\norm{\mu_H}_{H^{1/2}}} \biggr).
    \end{align*}
    The first term, otherwise known as the approximation error, is bounded by our assumption that $\abs{p}_{H^2} < \infty$, meaning that the traces on the interior is at least in $H^{3/2}(\Gamma_i)$ for all $1 \le i \le n$, hence
    \begin{align*}
        \inf_{\mu_H \in \Lambda_0} \norm{\lambda^* - \mu_H}_{H^{1/2}} \le H \sum_{i=1}^n \norm{\lambda^*}_{H^{3/2}(\Gamma_i)} \le H \abs{p}_{H^2(\Omega)}
    \end{align*}
    by standard approximation results.

    For the consistency error, we substitute the definition into the definition of our bilinear form and linear functional in, and noting that $- \nabla K \nabla p = f$ by definition of our problem, we have for all $\mu_H \in \Lambda_0$
    \begin{align*}
        \frac{\abs{b_h(\lambda^*, \mu_H) - L_h(\mu_H)}}{\norm{\mu_H}_{H^{1/2}}} &= \frac{\abs{\sum_{i=1}^n (\mat u_h^*(Q_i \lambda^*) + \mat{\bar u}_h, \nabla (Q_i \mu_H)) - (f, Q_i \mu_H)}}{\norm{\mu_H}_{H^{1/2}}} \\
        &= \frac{\abs{\sum_{i=1}^n (K \nabla (p_h(\lambda^*) - p), \nabla Q_i \mu_H) + (K \nabla p, \nabla Q_i \mu_H) - (-\nabla K \nabla p, Q_i \mu_H)}}{\norm{\mu_H}_{H^{1/2}}} \\
        &= \frac{\abs{\sum_{i=1}^n (K \nabla (p_h(\lambda^*) - p), \nabla Q_i \mu_H) -\int_{\Gamma_{i}} K \nabla p Q_i \mu_H \cdot \mat{n}_i\, ds} }{\norm{\mu_H}_{H^{1/2}}}
    \end{align*}
    where $\mat {n}_i$ is the outward normal to the subdomain $\Omega_i$, and $p_h(\lambda^*) := p^*(Q_i \lambda^*) + \bar p$.
    The first term can be estimate using Cauchy-Schwarz inequality, 
    \begin{align*}
        \frac{\abs{\sum_{i=1}^n (K \nabla (p_h(\lambda^*) - p), \nabla Q_i \mu_H)} }{\norm{\mu_H}_{H^{1/2}}} &\le \frac{\sum_{i=1}^n \norm{K \nabla (p_h(\lambda^*) - p)}_{\Omega_i}\norm{\nabla Q_i \mu_H}_{\Omega_i} }{\norm{\mu_H}_{H^{1/2}}} \\
        &\le n \max_i \norm{K \nabla (p_h(\lambda^*) - p)}_{\Omega_i}\frac{\sum_{i=1}^n \norm{\nabla Q_i \mu_H}_{\Omega_i} }{\norm{\mu_H}_{H^{1/2}}} \\
        &\le C n \max_i \norm{K \nabla (p_h(\lambda^*) - p)}_{\Omega_i} \le C n \delta.
    \end{align*}
    where we use the fact that
    \begin{align*}
        \norm{\nabla Q_i \mu_H}_{\Omega_i} &\le \norm{Q_i \mu_H}_{H^1(\Omega_i)} 
        \le C \norm{Q_i \mu_H}_{H^{1/2}(\Gamma_i)} \le C\norm{\mu_H}_{H^{1/2}(\Gamma_i)}
    \end{align*}
    where we used the properties of discrete harmonic extensions \cite{toselli2004domain}, and the fact that $L^2$ projection is stable in $H^{1/2}$ due to interpolation \cite{bramble1991some}.
    
    As for the second term, we note that if two subdomains $\Omega_i, \Omega_j$ are adjacent, then $\mat n_i = -\mat n_j$ meaning
    \begin{align*}
        \frac{\abs{\sum_{i=1}^n \int_{\Gamma_{i}} K \nabla p Q_i \mu_H \cdot \mat{n}_i\, ds} }{\norm{\mu_H}_{H^{1/2}}} &\le \frac{\sum_{\Gamma_{ij}}\abs{ \int_{\Gamma_{ij}} K \nabla p (Q_i \mu_H - Q_j \mu_H) \cdot \mat n}_i}{\norm{\mu_H}_{H^{1/2}}} \\
         &\le \frac{\sum_{\Gamma_{ij}}\norm{ K \nabla p \cdot \mat n_i}_{H^{1/2}(\Gamma_{ij})} \norm{Q_i \mu_H - Q_j \mu_H}_{H^{-1/2}(\Gamma_{ij})}}{\norm{\mu_H}_{H^{1/2}}} \\
        &\le Cn \abs{p}_{H^2(\Omega)}  \max_i\frac{ \norm{(I - Q_i)\mu_H}_{H^{-1/2}(\Gamma_{ij})}}{\norm{\mu_H}_{H^{1/2}}} \\
        &\le Cn\abs{p}_{H^2(\Omega)} \max_i h_i 
    \end{align*}
    where we used the inequality $\norm{Q_i - Q_j} \le \norm{I - Q_i} + \norm{I - Q_j}$, the trace inequality on normal derivatives \cite[Thm. 1.5.1.2]{grisvard2011elliptic}, $L^2$ projection approximation properties \cite[(3.5)]{arbogast2007multiscale}, and where $h_i$ denotes the maximal mesh-size on each subdomain $\Omega_i$. 
\end{proof}

\subsection{FEEC Element Training}\label{sec:feec-training}
For each of the FEEC elements used in Examples 2 through 4 with the exception of the battery example (discussed below), a ``monolithic'' approach is used.
For concreteness, we will exposit the details fully for Example 3 as the other examples only differ by model hyper-parameters described in the relevant section and the training data.  

The data used to train the FEEC elements are generated from 20480 randomly sampled points from $[0, 1]^2$ evaluated by interpolating the solution of an elementary finite element solver.
In the case of the FEEC element in Example 3, a grand total of 12 different solutions each with different boundary conditions, corresponding to the third-order Bernstein polynomials on the boundary (e.g. $x^3 y^3, \binom{3}{1}x^3y^2(1 - y), \binom{3}{2}x^3y(1-y)^2$ etc), are used alongside the forcing term of $f=0$.
The Bernstein polynomials were used instead of simple hat functions as we found the additional smoothness meant pre-training of the FEEC element was more stable. 
In \cref{fig:training_data}, we plot the first five, out of twelve, of the training data we generated for \cref{sec:cylinder}.

Let $\xi$ correspond to all the hyper-parameters in the FEEC model (e.g. knot location, POU coefficients, scaling coefficients).
The loss function we use is
\begin{align}
    \min_{\xi} \sum_{k = 1}^{12} \frac{\norm{p_{\xi, k} - p_{\text{data}, k}}_{MSE}}{\norm{ p_{\text{data}, k}}_{\ell_2}} + \frac{\norm{\mat u_{\xi, k} - \mat u_{\text{data}, k}}_{MSE}}{\norm{\mat u_{\text{data}, k}}_{\ell_2} + 0.001}
    \label{eqn:loss-function}
\end{align}
where $p_{\xi, k}, \mat u_{\xi, k}$ are the FEEC solutions with the $k$th boundary condition, and $p_{\text{data}, k}$, $\mat u_{\text{data}, k}$ are the data for the $k$th boundary condition subject to the constraint.
This is exactly \cref{eqn:h1loss}, except we summed over all the different boundary conditions and minimized against all the boundary conditions in a single epoch (e.g. a monolithic approach).
The computation of the loss is efficient since $p_{\xi, k}$ for $k = 1, \ldots, 12$ can be solved with a single linear solver step because their systems only differ in their right hand sides from the boundary conditions.
Thus, the expensive stiffness matrix generation only has to be performed once at each optimization step. 
The standard Adams optimizer were used in each case as discussed in \cite{actoradata}.

As a result of the monolithic training and the basis generation of FEEC, the FEEC element will be able to accurately solve for the flux and pressure even when faced with Dirichlet boundary conditions which it has not seen before. 
For example, in \cref{fig:y_bc}, we plot the true and predicted solution of \cref{eqn:cylinder-kappa} with a boundary condition of $y$. 
Note that, while the boundary condition was never explicitly given in the training data, that the FEEC element was able to reproduce the behavior around the material discontinuity quite accurately. 

As noted in \cref{sec:battery-example}, the battery example assumes only a single data set is available, with no additional data generation with varying boundary conditions as above. 
The data for each subdomain are simply obtained via a restriction operator, and the loss is exactly \cref{eqn:h1loss}. 


\begin{figure}
    \centering
    \includegraphics[width=.8\textwidth]{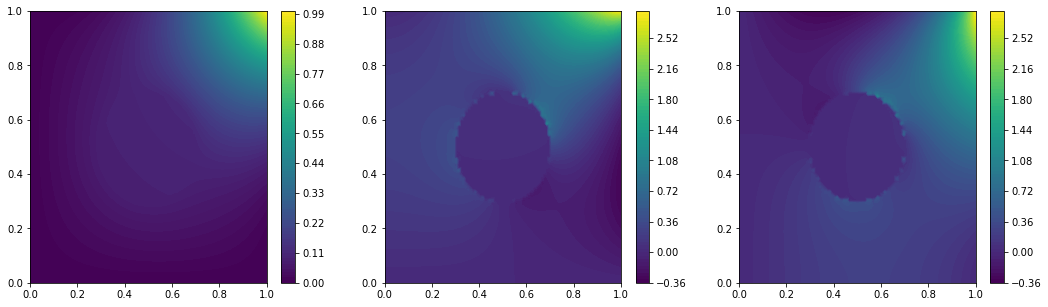}
    \includegraphics[width=.8\textwidth]{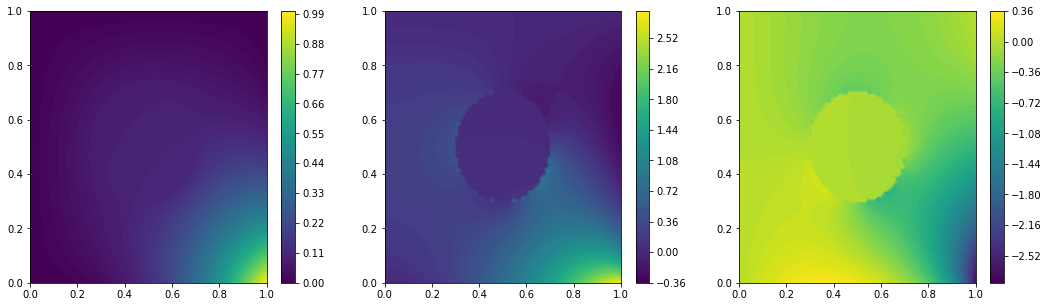}
    \includegraphics[width=.8\textwidth]{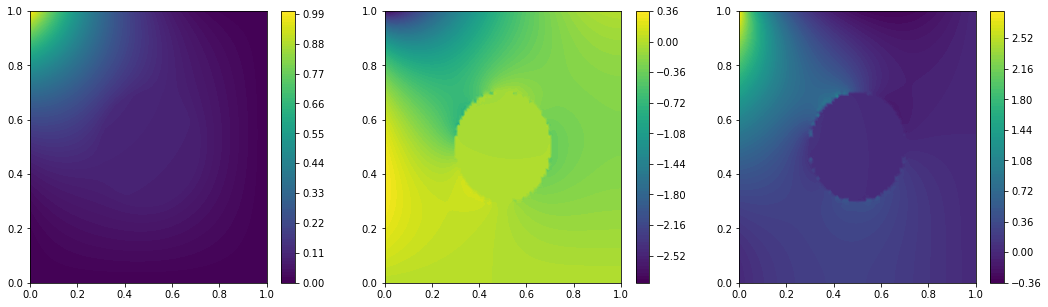}
    \includegraphics[width=.8\textwidth]{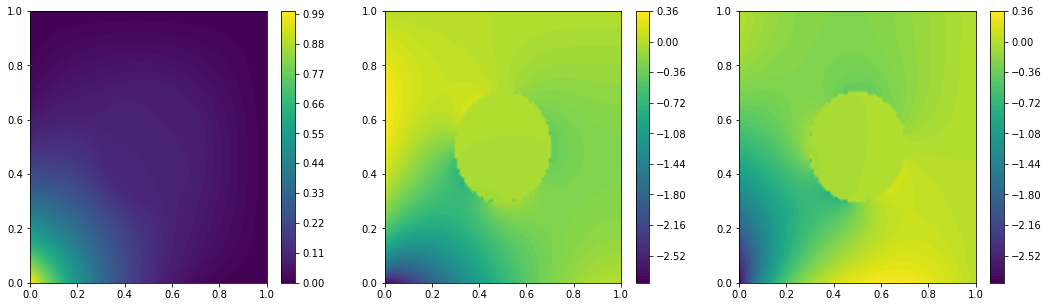}
    \includegraphics[width=.8\textwidth]{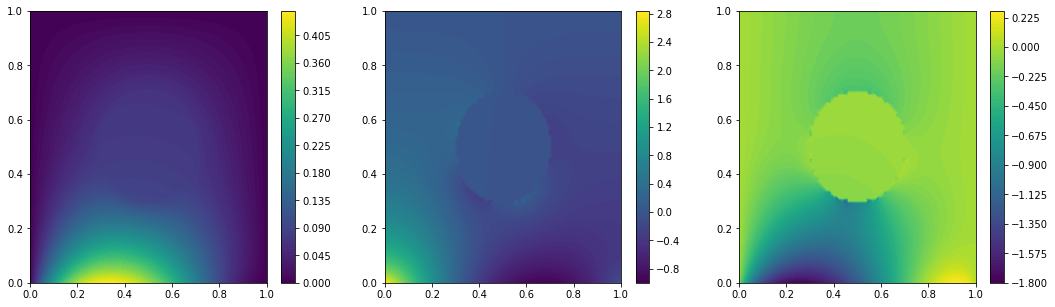} \\
    $\vdots$
    \caption{Plots illustrating some of the training data used for \cref{sec:cylinder} with pressure, flux of $x$ and flux of $y$ in the columns respectively. 
    The data is generated from a low order FEM method with $h = 1/100$.
    The key differences between each data set is that the boundary conditions are varied so that the element can respond to the different mortars.}
    \label{fig:training_data}
\end{figure}

\begin{figure}
    \centering
    \includegraphics[width=.9\textwidth]{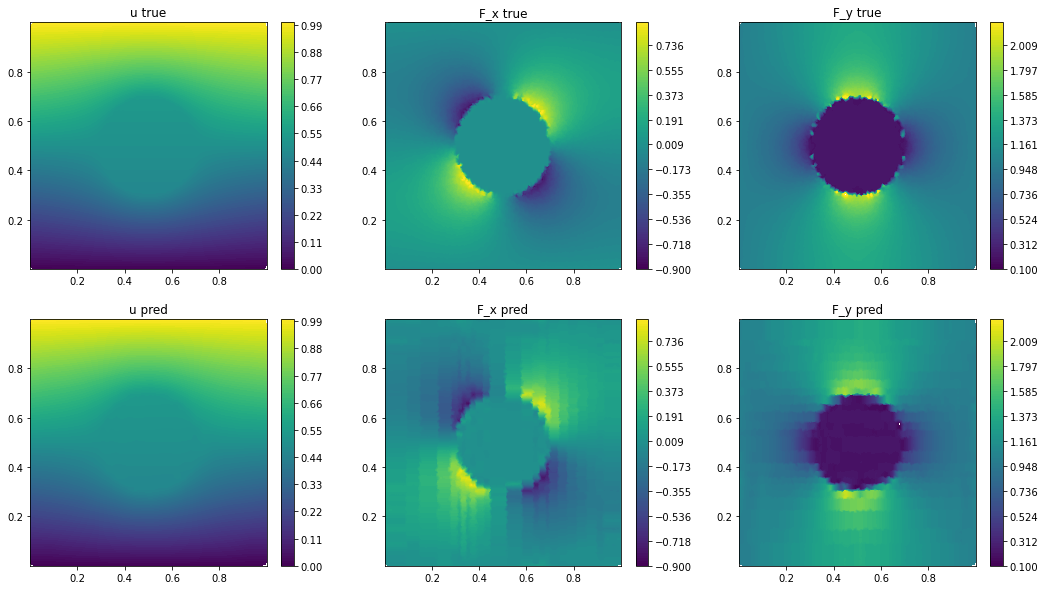}
    \caption{Plot of the true (first row) and predicted using a $24 \times 24$ fine-scale FEEC element (second row) solution to \cref{eqn:cylinder-kappa} with the boundary condition $y$ on the domain $[-.5, .5]^2$. 
    Note that while the boundary condition is not explicitly included in the training data, but rather a linear combination, we are able to reproduce the true solution accurately due to training against a large suite of boundary conditions. }
    \label{fig:y_bc}
\end{figure}

\bibliographystyle{siamplain}
\bibliography{references}

\begin{thebibliography}{10}

\bibitem{actoradata}
{\sc J.~A. Actor, X.~Hu, A.~Huang, S.~A. Roberts, and N.~Trask}, {\em
  Data-driven whitney forms for structure-preserving control volume analysis},
  Available at SSRN 4264144.

\bibitem{arbogast2000mixed}
{\sc T.~Arbogast, L.~C. Cowsar, M.~F. Wheeler, and I.~Yotov}, {\em Mixed finite
  element methods on nonmatching multiblock grids}, SIAM Journal on Numerical
  Analysis, 37 (2000), pp.~1295--1315.

\bibitem{arbogast2007multiscale}
{\sc T.~Arbogast, G.~Pencheva, M.~F. Wheeler, and I.~Yotov}, {\em A multiscale
  mortar mixed finite element method}, Multiscale Modeling \& Simulation, 6
  (2007), pp.~319--346.

\bibitem{arnold2018finite}
{\sc D.~N. Arnold}, {\em Finite element exterior calculus}, SIAM, 2018.

\bibitem{bergomi2019towards}
{\sc M.~G. Bergomi, P.~Frosini, D.~Giorgi, and N.~Quercioli}, {\em Towards a
  topological--geometrical theory of group equivariant non-expansive operators
  for data analysis and machine learning}, Nature Machine Intelligence, 1
  (2019), pp.~423--433.

\bibitem{bernardi1993domain}
{\sc C.~Bernardi, Y.~Maday, and A.~T. Patera}, {\em Domain decomposition by the
  mortar element method}, in Asymptotic and numerical methods for partial
  differential equations with critical parameters, Springer, 1993,
  pp.~269--286.

\bibitem{bertoluzza2000wavelet}
{\sc S.~Bertoluzza and A.~Kunoth}, {\em Wavelet stabilization and
  preconditioning for domain decomposition}, IMA journal of numerical analysis,
  20 (2000), pp.~533--559.

\bibitem{biegler2003large}
{\sc L.~T. Biegler, O.~Ghattas, M.~Heinkenschloss, and B.~van
  Bloemen~Waanders}, {\em Large-scale pde-constrained optimization: an
  introduction}, in Large-Scale PDE-Constrained Optimization, Springer, 2003,
  pp.~3--13.

\bibitem{braess2007finite}
{\sc D.~Braess}, {\em Finite elements: Theory, fast solvers, and applications
  in solid mechanics}, Cambridge University Press, 2007.

\bibitem{bramble1991some}
{\sc J.~H. Bramble and J.~Xu}, {\em Some estimates for a weighted $l^2$
  projection}, Mathematics of computation, 56 (1991), pp.~463--476.

\bibitem{brenner2003poincare}
{\sc S.~C. Brenner}, {\em Poincar{\'e}--friedrichs inequalities for piecewise h
  1 functions}, SIAM Journal on Numerical Analysis, 41 (2003), pp.~306--324.

\bibitem{cai2021physics}
{\sc S.~Cai, Z.~Mao, Z.~Wang, M.~Yin, and G.~E. Karniadakis}, {\em
  Physics-informed neural networks (pinns) for fluid mechanics: A review}, Acta
  Mechanica Sinica, 37 (2021), pp.~1727--1738.

\bibitem{celledoni2021structure}
{\sc E.~Celledoni, M.~J. Ehrhardt, C.~Etmann, R.~I. McLachlan, B.~Owren, C.-B.
  Schonlieb, and F.~Sherry}, {\em Structure-preserving deep learning}, European
  journal of applied mathematics, 32 (2021), pp.~888--936.

\bibitem{cockburn2009unified}
{\sc B.~Cockburn, J.~Gopalakrishnan, and R.~Lazarov}, {\em Unified
  hybridization of discontinuous galerkin, mixed, and continuous galerkin
  methods for second order elliptic problems}, SIAM Journal on Numerical
  Analysis, 47 (2009), pp.~1319--1365.

\bibitem{cowsar1995balancing}
{\sc L.~C. Cowsar, J.~Mandel, and M.~F. Wheeler}, {\em Balancing domain
  decomposition for mixed finite elements}, Mathematics of computation, 64
  (1995), pp.~989--1015.

\bibitem{desai2021port}
{\sc S.~A. Desai, M.~Mattheakis, D.~Sondak, P.~Protopapas, and S.~J. Roberts},
  {\em Port-hamiltonian neural networks for learning explicit time-dependent
  dynamical systems}, Physical Review E, 104 (2021), p.~034312.

\bibitem{evans2022partial}
{\sc L.~C. Evans}, {\em Partial differential equations}, vol.~19, American
  Mathematical Society, 2022.

\bibitem{farhat2001feti}
{\sc C.~Farhat, M.~Lesoinne, P.~LeTallec, K.~Pierson, and D.~Rixen}, {\em
  Feti-dp: a dual--primal unified feti method—part i: A faster alternative to
  the two-level feti method}, International journal for numerical methods in
  engineering, 50 (2001), pp.~1523--1544.

\bibitem{gillette2016construction}
{\sc A.~Gillette, A.~Rand, and C.~Bajaj}, {\em Construction of scalar and
  vector finite element families on polygonal and polyhedral meshes},
  Computational Methods in Applied Mathematics, 16 (2016), pp.~667--683.

\bibitem{glowinski1987domain}
{\sc R.~Glowinski and M.~F. Wheeler}, {\em Domain decomposition and mixed
  finite element methods for elliptic problems}, tech. report, 1987.

\bibitem{greydanus2019hamiltonian}
{\sc S.~Greydanus, M.~Dzamba, and J.~Yosinski}, {\em Hamiltonian neural
  networks}, Advances in neural information processing systems, 32 (2019).

\bibitem{grisvard2011elliptic}
{\sc P.~Grisvard}, {\em Elliptic problems in nonsmooth domains}, SIAM, 2011.

\bibitem{gruber2023reversible}
{\sc A.~Gruber, K.~Lee, and N.~Trask}, {\em Reversible and irreversible
  bracket-based dynamics for deep graph neural networks}, arXiv preprint
  arXiv:2305.15616,  (2023).

\bibitem{hernandez2021structure}
{\sc Q.~Hern{\'a}ndez, A.~Bad{\'\i}as, D.~Gonz{\'a}lez, F.~Chinesta, and
  E.~Cueto}, {\em Structure-preserving neural networks}, Journal of
  Computational Physics, 426 (2021), p.~109950.

\bibitem{hinze2008optimization}
{\sc M.~Hinze, R.~Pinnau, M.~Ulbrich, and S.~Ulbrich}, {\em Optimization with
  PDE constraints}, vol.~23, Springer Science \& Business Media, 2008.

\bibitem{hirani2003discrete}
{\sc A.~N. Hirani}, {\em Discrete exterior calculus}, California Institute of
  Technology, 2003.

\bibitem{jagtap2021extended}
{\sc A.~D. Jagtap and G.~E. Karniadakis}, {\em Extended physics-informed neural
  networks (xpinns): A generalized space-time domain decomposition based deep
  learning framework for nonlinear partial differential equations.}, in AAAI
  spring symposium: MLPS, vol.~10, 2021.

\bibitem{karniadakis2021physics}
{\sc G.~E. Karniadakis, I.~G. Kevrekidis, L.~Lu, P.~Perdikaris, S.~Wang, and
  L.~Yang}, {\em Physics-informed machine learning}, Nature Reviews Physics, 3
  (2021), pp.~422--440.

\bibitem{nicolaides1992direct}
{\sc R.~A. Nicolaides}, {\em Direct discretization of planar div-curl
  problems}, SIAM Journal on Numerical Analysis, 29 (1992), pp.~32--56.

\bibitem{notz2016sierra}
{\sc P.~K. Notz, S.~R. Subia, M.~M. Hopkins, H.~K. Moffat, D.~R. Noble, and
  T.~O. Okusanya}, {\em Sierra multimechanics module: aria user manual--version
  4.40}, tech. report, Sandia National Lab.(SNL-NM), Albuquerque, NM (United
  States), 2016.

\bibitem{patel2022thermodynamically}
{\sc R.~G. Patel, I.~Manickam, N.~A. Trask, M.~A. Wood, M.~Lee, I.~Tomas, and
  E.~C. Cyr}, {\em Thermodynamically consistent physics-informed neural
  networks for hyperbolic systems}, Journal of Computational Physics, 449
  (2022), p.~110754.

\bibitem{roberts1991mixed}
{\sc J.~E. Roberts and J.-M. Thomas}, {\em Mixed and hybrid methods},  (1991).

\bibitem{smith1997domain}
{\sc B.~F. Smith}, {\em Domain decomposition methods for partial differential
  equations}, in Parallel Numerical Algorithms, Springer, 1997, pp.~225--243.

\bibitem{Smythe1989-tk}
{\sc W.~R. w.~R. Smythe}, {\em Static and dynamic electricity}, International
  series in pure and applied physics, Taylor \& Francis, Philadelphia, PA, Feb.
  1989.

\bibitem{toselli2004domain}
{\sc A.~Toselli and O.~Widlund}, {\em Domain decomposition methods-algorithms
  and theory}, vol.~34, Springer Science \& Business Media, 2004.

\bibitem{trask2022enforcing}
{\sc N.~Trask, A.~Huang, and X.~Hu}, {\em Enforcing exact physics in scientific
  machine learning: a data-driven exterior calculus on graphs}, Journal of
  Computational Physics, 456 (2022), p.~110969.

\bibitem{trask2018compatible}
{\sc N.~Trask, M.~Maxey, and X.~Hu}, {\em A compatible high-order meshless
  method for the stokes equations with applications to suspension flows},
  Journal of Computational Physics, 355 (2018), pp.~310--326.

\bibitem{villar2021scalars}
{\sc S.~Villar, D.~W. Hogg, K.~Storey-Fisher, W.~Yao, and B.~Blum-Smith}, {\em
  Scalars are universal: Equivariant machine learning, structured like
  classical physics}, Advances in Neural Information Processing Systems, 34
  (2021), pp.~28848--28863.

\bibitem{wang2021understanding}
{\sc S.~Wang, Y.~Teng, and P.~Perdikaris}, {\em Understanding and mitigating
  gradient flow pathologies in physics-informed neural networks}, SIAM Journal
  on Scientific Computing, 43 (2021), pp.~A3055--A3081.

\bibitem{yang2019adversarial}
{\sc Y.~Yang and P.~Perdikaris}, {\em Adversarial uncertainty quantification in
  physics-informed neural networks}, Journal of Computational Physics, 394
  (2019), pp.~136--152.

\bibitem{zhang2019quantifying}
{\sc D.~Zhang, L.~Lu, L.~Guo, and G.~E. Karniadakis}, {\em Quantifying total
  uncertainty in physics-informed neural networks for solving forward and
  inverse stochastic problems}, Journal of Computational Physics, 397 (2019),
  p.~108850.

\end{thebibliography}
\end{document}